\title[%
   Jorge-Meeks type maximal surfaces]{%
   Analytic extension of Jorge-Meeks type maximal surfaces 
   in Lorentz-Minkowski 3-space
}
\date{September 19, 2015}
\theoremstyle{plain}
 \newtheorem{theorem}{Theorem}[section]
 \newtheorem*{theorem*}{Theorem}
 \newtheorem*{lemma*}{Lemma}
 \newtheorem{proposition}[theorem]{Proposition}
 \newtheorem*{fact*}{Fact}
 \newtheorem{lemma}[theorem]{Lemma}
 \newtheorem{corollary}[theorem]{Corollary}
 \theoremstyle{remark}
  \newtheorem{definition}[theorem]{Definition}
 \newtheorem*{remark*}{Remark}
 \newtheorem*{problem*}{Problem}
\numberwithin{equation}{section}
\numberwithin{figure}{section}
\newcommand{\Z}{\boldsymbol{Z}}
\newcommand{\R}{\boldsymbol{R}}
\newcommand{\C}{\boldsymbol{C}}
\newcommand{\imag}{\mathrm{i}}
\renewcommand{\Re}{\operatorname{Re}}
\renewcommand{\Im}{\operatorname{Im}}
\newcommand{\trans}[1]{{\vphantom{#1}}^t#1}
\renewcommand{\phi}{\varphi}
\renewcommand{\epsilon}{\varepsilon}
\renewcommand{\mid}{\, ; \,}
\author{S.~Fujimori}
\address[Shoichi Fujimori]{%
   Department of Mathematics, Okayama University,
   Tsushima-naka, Okayama 700-8530, Japan}
\email{fujimori@math.okayama-u.ac.jp}
\author{Y. Kawakami}
\address[Yu Kawakami]{%
   Graduate School of Natural Science and Technology,
   Kanazawa University,
   Kanazawa, 920-1192, Japan,
}
\email{y-kwkami@se.kanazawa-u.ac.jp}
\author{M. Kokubu}
\address[Masatoshi Kokubu]{%
   Department of Mathematics, School of Engineering, 
   Tokyo Denki University, 
   Tokyo 120-8551, Japan}
\email{kokubu@cck.dendai.ac.jp}
\author{W.~Rossman}
\address[Wayne Rossman]{%
   Department of Mathematics, Faculty of Science,
   Kobe University,
   Rokko, Kobe 657-8501, Japan
}
\email{wayne@math.kobe-u.ac.jp}
\author{M.~Umehara}
\address[Masaaki Umehara]{%
   Department of Mathematical and Computing Sciences,
   Tokyo Institute of Technology
   2-12-1-W8-34, O-okayama, Meguro-ku,
   Tokyo 152-8552, Japan.
}
\email{umehara@is.titech.ac.jp}
\author{K.~Yamada}
\address[Kotaro Yamada]{%
   Department of Mathematics\\
   Tokyo Institute of Technology\\
   O-okayama, Meguro, Tokyo 152-8551\\
   Japan
}
\email{kotaro@math.titech.ac.jp}
\subjclass[2000]{Primary 53A10; Secondary 53A35, 53C50.}
\begin{document}
\begin{abstract}
 The Jorge-Meeks $n$-noid ($n\ge 2$)
 is a complete minimal surface
 of genus zero with $n$ catenoidal ends
 in the Euclidean 3-space $\R^3$, which has
 $(2\pi/n)$-rotation symmetry with respect to its axis. 
 In this paper,
 we show that the corresponding maximal surface $f_n$
 in Lorentz-Minkowski 3-space $\R^3_1$ has 
 an analytic extension $\tilde f_n$
 as a properly embedded 
 zero mean curvature surface.
 The extension changes type into a time-like (minimal) surface.
\end{abstract}
\maketitle
\section*{Introduction}
\begingroup
\renewcommand{\theequation}{\arabic{equation}}
\renewcommand{\thefigure}{\arabic{figure}}
A number of 
zero mean curvature surfaces 
of mixed type in Lorentz-Minkowski three-space 
$(\R^3_1;t,x,y)$ were found in \cite{K1}, 
\cite{G}, \cite{ST}, \cite{Kl}, 
\cite{FRUYY2}, \cite{FKKRSUYY1} and \cite{FKKRSUYY2}.
One of the main tools for the construction 
of such surfaces is based on the fact that fold 
singularities of space-like maximal surfaces have  
real analytic extensions to time-like minimal surfaces 
(cf.\ \cite{G}, \cite{Kl}, \cite{KKSY} and \cite{FKKRSUYY2}). 
Some of the analytic extensions of such examples 
have neither singularities nor self-intersections.
A typical such example is a space-like helicoid,
which analytically extends to a time-like surface,
and the entire surface coincides with the original
helicoid as a minimal surface in $\R^3$.
Also, the Scherk type surface
\begin{equation}\label{eq:S}
 t(x,y):=\log\frac{\cosh y}{\cosh x}
\end{equation}
gives an entire graph which changes type from
a space-like maximal surface 
to a time-like zero mean curvature surface, as
pointed out by Kobayashi \cite{K1}.
Recently, it was shown in \cite{FRUYY2} 
that the space-like maximal analogues in 
$\R^3_1$ of the Schwarz D surfaces in $\R^3$
have analytic extensions as triply periodic
embedded zero mean curvature surfaces.
\begin{figure}[htb]
 \centering
 \includegraphics[height=4cm]{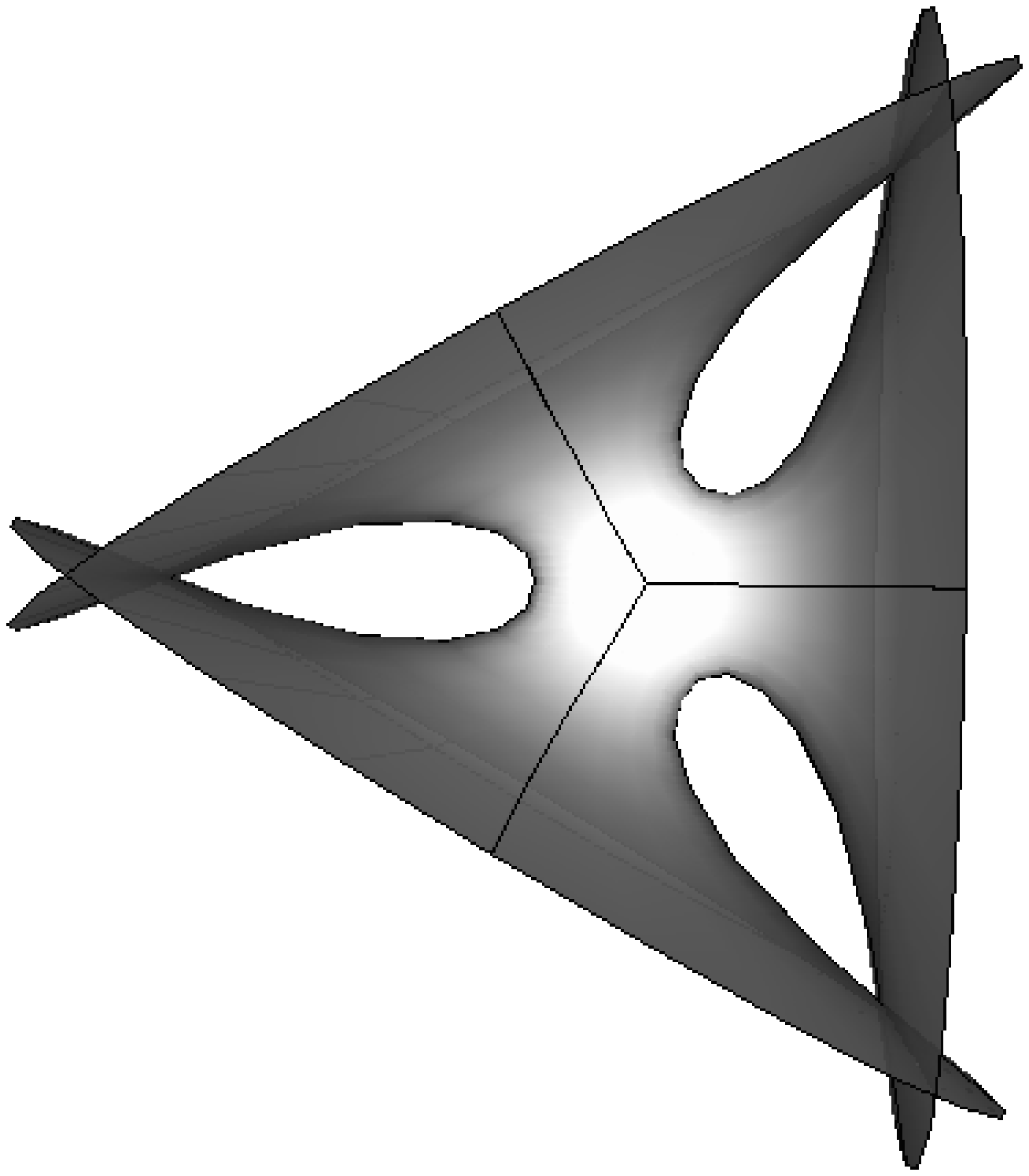} \hskip 1.7cm 
 \includegraphics[height=4.6cm]{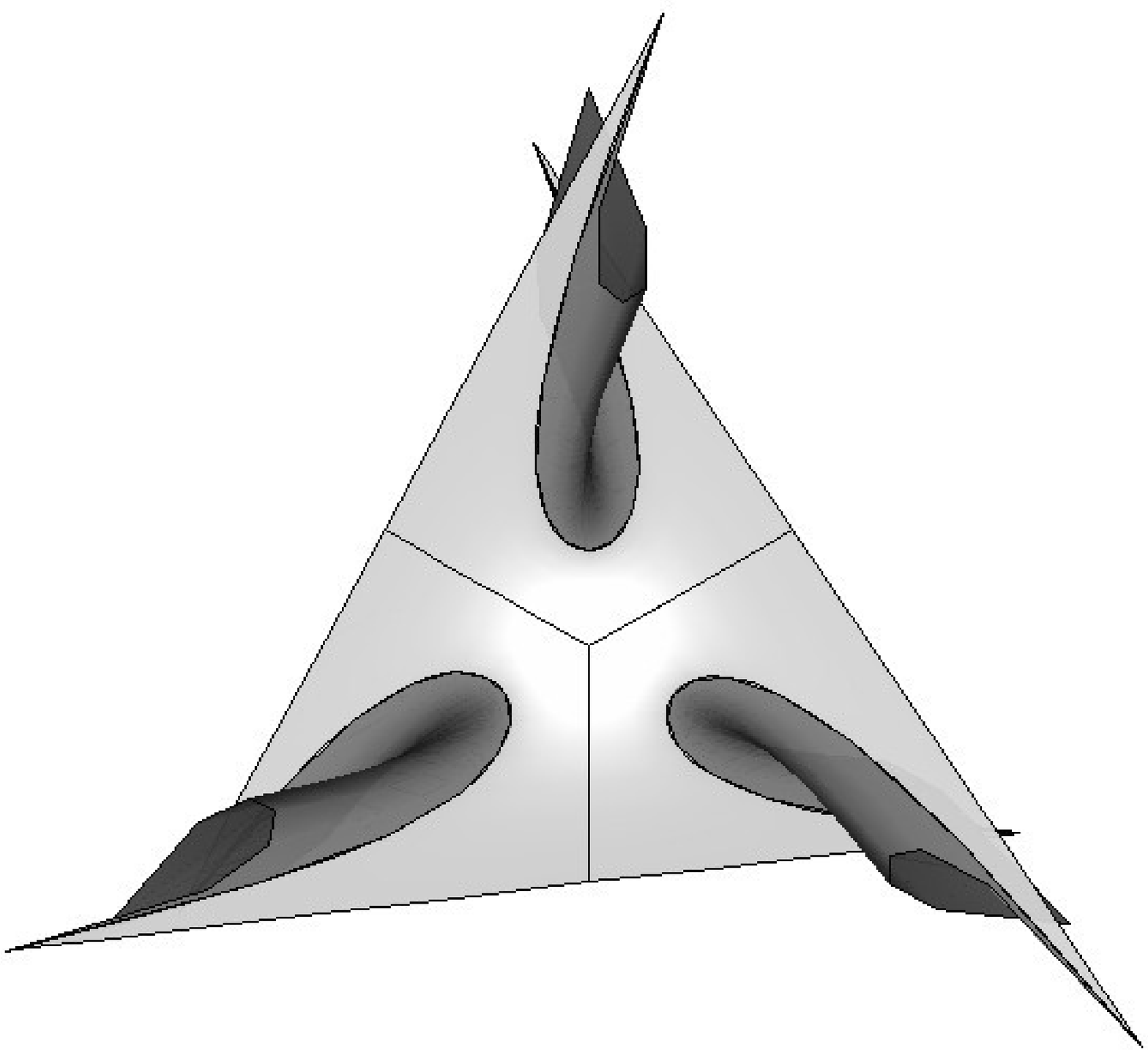}  
\caption{%
 The Jorge-Meeks trinoid in $\R^3$
 and the analytic extension of $f_3$
 (in the figure on the right-hand side, 
the time-like parts are
indicated by black shading).}
 \label{fig:00}
\end{figure}
These examples caused the authors
to be interested in space-like maximal analogues
$f_n$ ($n=2,3,\dots$) of  
Jorge-Meeks minimal surfaces with $n$ catenoidal ends.
These surfaces have fold singularities, and have
analytic extensions to time-like surfaces.
We show in this paper that 
the analytic extension of $f_n$ is a proper embedding. 
\endgroup
\section{Preliminaries}
\label{sec:1}

We denote by $(\R^3_1;t,x,y)$
the Lorentz-Minkowski $3$-space of signature $(-++)$
and denote 
the Riemann sphere by $S^2:=\C\cup\{\infty\}$.

\begin{definition}\label{def:W}
 A pair $(g,\omega)$ consisting of a meromorphic function and a 
 meromorphic $1$-form  defined on the Riemann sphere
 is called a \emph{Weierstrass data on $S^2$}
 if the metric
 \begin{equation*}
     ds^2_E:=(1+|g|^2)^2 |\omega|^2
 \end{equation*}
 has no zeros on $S^2$.
 A point where $ds^2_E$ diverges
 is called an \emph{end} of $ds_E^2$.
\end{definition}
We now fix a Weierstrass data $(g,\omega)$ on $S^2$ and
let $\{p_1,\dots,p_n\}$ be the set of ends of $ds^2_E$. 
Then the real part of the map
\begin{equation*}
    F:=
        \int_{z_0}^z\bigl(-2g, 1+g^2,\imag (1-g^2)\bigr)\omega
	\qquad\left(\imag=\sqrt{-1}\right)
\end{equation*}
is a map
\begin{equation*}
 f_L=\Re (F),
\end{equation*}
into $\R^3_1$
which is defined on the  universal cover of 
$S^2\setminus\{p_1,\dots,p_n\}$.
We call $f_L$ the
\emph{maximal surface associated to  $(g,\omega)$},
and $F$ the \emph{holomorphic lift} of $f_L$.
If $f_L$ is single-valued on $S^2\setminus\{p_1,\dots,p_n\}$,
then we say that $f_L$ 
satisfies the \emph{period condition}.
The first fundamental form of $f_L$ is given by
\begin{equation*}
    ds^2=(1-|g|^2)^2 |\omega|^2.
\end{equation*}
In particular,
the singular set of $f_L$ consists of the points where
$|g|=1$.  
In this situation, 
we set $F=(X_0,X_1,X_2)$.
As pointed out in \cite{UY},
the real part $f_E:=\Re(F_E)$
of the holomorphic map
$F_E:=(X_1,X_2,\imag X_0)$
gives a conformal minimal immersion into
the Euclidean $3$-space $\R^3$
defined on the universal cover
of $S^2\setminus\{p_1,\dots,p_n\}$
such that the first fundamental
form of $f_E$ coincides with
$\pi^*ds^2_E$, where $\pi$ is the covering projection.
In particular, $F_E$ (and also $F$)
is an immersion.
So the map $f_L$ is a \emph{maxface} in the sense 
of \cite{UY} (see also \cite{FSUY} 
and \cite{FKKRSUYY2}, in particular, a convenient 
definition of maxface which is equivalent to 
the original one is given in 
\cite[Definition 2.7]{FKKRSUYY2}).
The minimal immersion $f_E$ is called
the \emph{companion} of $f_L$.

We are interested in the maximal surface $f_n$ associated to 
\begin{equation*}
   g_n=z^{n-1},\qquad
   \omega_n=\frac{\imag dz}{(z^n-1)^2}
   \qquad (n=2,3,4,\dots).
\end{equation*}
As pointed out in \cite[Example 5.7]{UY},
the companion of $f_n$
is congruent to 
the well-known complete minimal surface
with catenoidal ends, 
called a \emph{Jorge-Meeks surface} 
(cf.\ \cite{JM}).
In particular, the associated metric $ds^2_E$
is complete on 
\begin{equation*}
 S^2 \setminus \{1,\zeta,\dots, \zeta^{n-1}\}
\text{ where } \zeta:=e^{2\pi \imag/n}.
\end{equation*}
This means that $(g_n,\omega_n)$ is a Weierstrass data on $S^2$. 
It can be checked that $f_n$ is single-valued on
$S^2 \setminus \{1,\zeta,\dots, \zeta^{n-1}\}$, and
the original Jorge-Meeks surface is as well.
So $f_n$ is a maxface, and we call $\{f_n\}_{n=2,3,\dots}$ 
the \emph{Jorge-Meeks type maximal surfaces}. 
The singular set of $f_n$ is the set $|z|=1$,
which consists of generic fold singularities
in the sense of \cite{FKKRSUYY2}, that is,
the image of the singular set consists
of a union of non-degenerate null curves
in $\R^3_1$.

\begin{figure}[thbp] 
\centering
 \includegraphics[width=.33\linewidth]{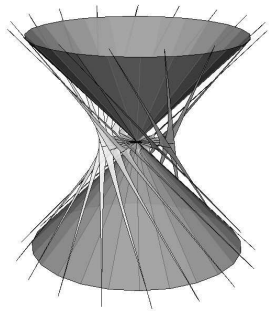} 
\caption{Jorge-Meeks type maximal surface $\tilde f_{17}$ 
(one-sheeted hyperboloid is also shown).
}
\label{fg:hyp-cyl0}
\end{figure}

We now observe that
$f_2$ has a canonical analytic extension 
embedded in $\R^3_1$ (see Figure \ref{fig:01}):
By definition,
\[
  f_2=\Re
      \left(
         \frac{\imag}{z^2-1},
         -\frac{\imag z}{z^2-1},
	 \frac12\log \frac{1-z}{1+z}
      \right).
\]
If we set $f_2=(x_0,x_1,x_2)$ 
and $z=r e^{\imag \theta}$, 
then
\begin{align*}
 x_0&=\frac{r^2 \sin 2 \theta}{r^4-2 r^2 \cos 2 \theta+1},
    \quad
 x_1=-\frac{r \left(r^2+1\right) \sin \theta}
      {r^4-2 r^2 \cos 2 \theta+1},\\
 x_2&=\frac{1}{4} \log \left(
     \frac{r^2-2 r \cos\theta+1}{r^2+2 r \cos \theta+1}\right).
\end{align*}
In particular, it holds that
\[
   \frac{x_0}{x_1}=-\frac{2 r \cos \theta}{r^2+1}
       =\tanh 2x_2.
\]
Thus, the image of $f_2$ is a subset of the
graph  $t=x\tanh 2y$ (Figure~\ref{fig:01}, left), 
and it changes type on the set
\[
   S:=\left\{\left(\pm \frac{\cosh 2 y}2,y\right)
          \,;\, y\in \R\right\},
\]
in the $xy$-plane,
and the connected domain with boundary $S$ consists of 
the image of the orthogonal projection of $f_2$  
into the $xy$-plane (cf.\ Figure \ref{fig:01}, right).
This means that the image of $f_2$ has
an analytic extension that coincides with
a zero mean curvature entire graph,
like as in the case of the Scherk type surface
\eqref{eq:S} in the introduction.

\begin{figure}[htb]
 \centering
  \includegraphics[height=4cm]{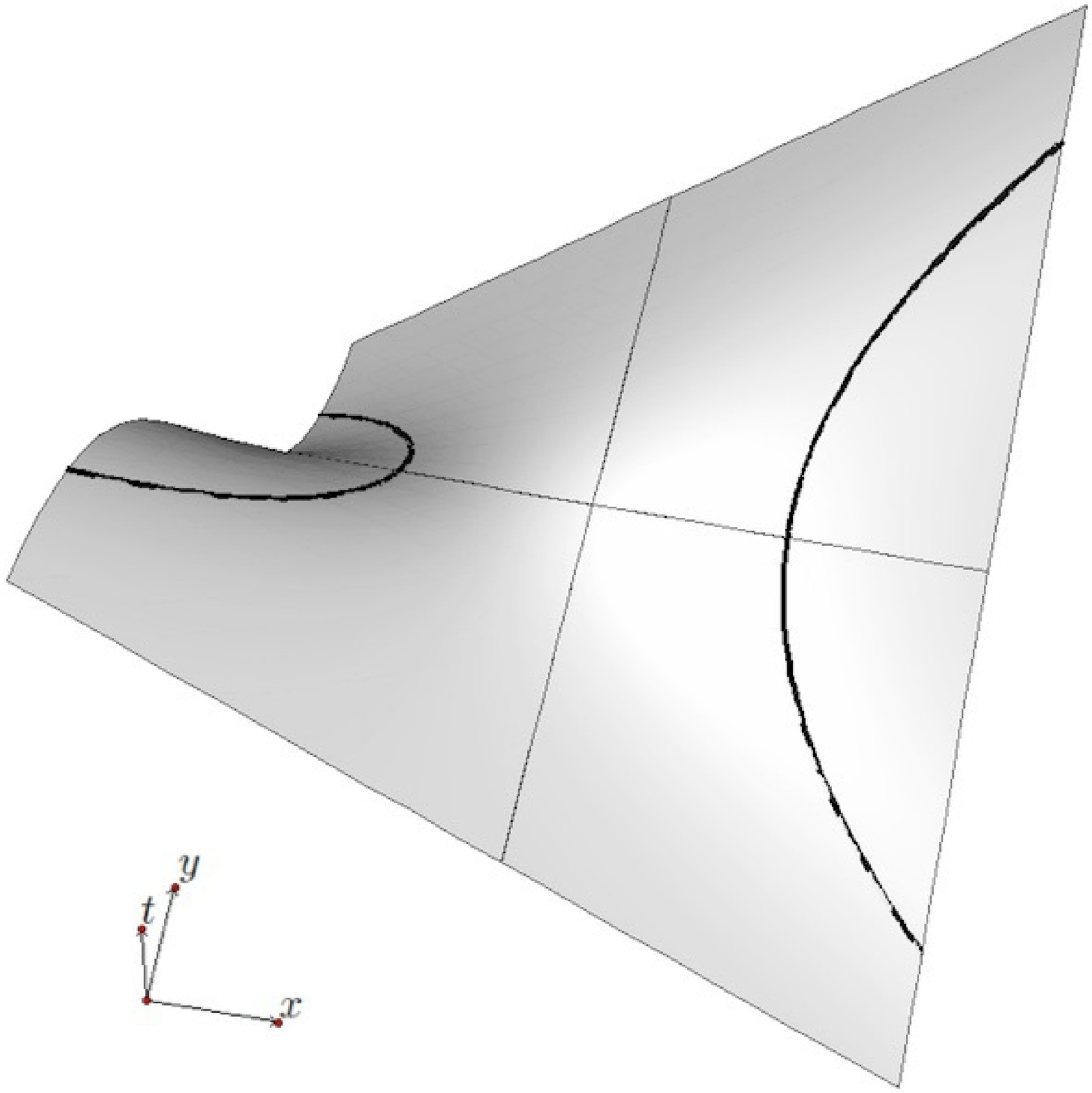} \hskip 1.7cm 
  \includegraphics[height=3.5cm]{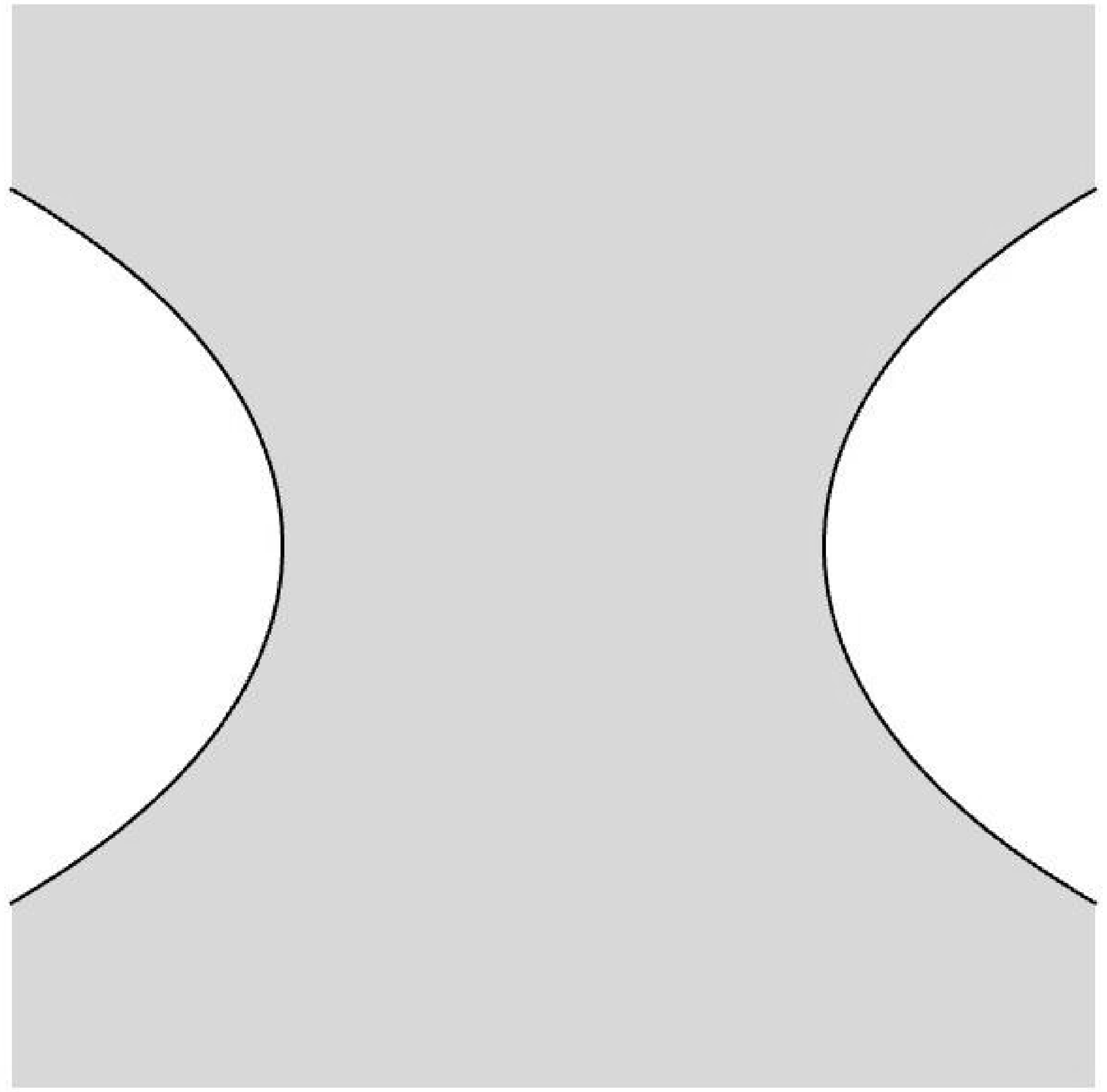}  
 \caption{%
  The analytic extension of $f_2$ 
  and its orthogonal projection of 
the space-like part.}
  \label{fig:01} 
\end{figure}

In \cite{K1}, the maximal surface $f_2$ is called a 
\textit{helicoid of the 2nd kind} and it was already pointed out that 
the function $t= x \tanh 2y$ is an entire solution to 
the maximal surface equation for graphs. 
The conjugate surface of $f_2$
induces a singly periodic
maxface, called the \emph{hyperbolic catenoid}
(see \cite{FKKRSUYY2} for details).

\section{Analytic extension of $f_n$}
\label{sec:2}
In this section, we will show that 
each $f_n$ 
has a canonical analytic extension
for $n\ge 3$
as well.
By definition, 
the Jorge-Meeks type maximal
surface $f_n=(x_0,x_1,x_2)$ and its holomorphic lift
$F=(X_0,X_1,X_2)$ are  given by
\begin{equation*}
  f_n =\Re (F) , \qquad F = \int_0^z \alpha,
\end{equation*}
where we set
\begin{equation}\label{eq:alpha}
 \begin{aligned}
  \alpha&=\alpha(z) = a(z)\,dz=\bigl(a_0(z),a_1(z),a_2(z)\bigr)\,dz\\
        &:=
          \left(
	    -\frac{2 \imag z^{n-1}}{\left(z^n-1\right)^2},
	     \frac{\imag \left(1+z^{2 n-2}\right)}{\left(z^n-1\right)^2},
            -\frac{1-z^{2 n-2}}{\left(z^n-1\right)^2}
	   \right)\,dz.
\end{aligned}
\end{equation}
Using these expressions, we show the following:
\begin{proposition}\label{prop:symmetry}
Regarding $f_n$ as a column vector-valued function,
the image of $f_n$ has the following two properties{\rm:}
 \[
    f_n(\bar z)=S
    f_n(z),\qquad
    f_n(\zeta z)
     =Rf_n(z),
 \]
 where
 \begin{equation}\label{eq:SR}
    S:=
      \left(
       \begin{array}{rrr}
       -1 & 0 & 0 \\
        0 & -1 & 0 \\
	0 & 0 & 1 
       \end{array}
      \right),\quad
      R:=
      \left(
       \begin{array}{ccc}
	1 & 0 & 0 \\
	0 & \hphantom{-}\cos \frac{2\pi}{n} & \sin \frac{2\pi}{n} \\[6pt]
	0 & -\sin \frac{2\pi}{n} & \cos \frac{2\pi}{n} 
       \end{array}
      \right)
 \end{equation}
and 
\begin{equation*}
 \zeta:=e^{2\pi\imag/n}.
\end{equation*}
\end{proposition}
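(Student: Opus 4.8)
The plan is to deduce both identities from transformation laws satisfied by the integrand $\alpha=a(z)\,dz$ in \eqref{eq:alpha}, and then to integrate. Since $f_n=\Re(F)$ with $F=\int_0^z\alpha$, and $f_n$ is single-valued on the connected domain $S^2\setminus\{1,\zeta,\dots,\zeta^{n-1}\}$, which contains $z=0$ with $f_n(0)=0$, it suffices to verify the matching identities for $a(z)$: the common fixed point $z=0$ (fixed by both $z\mapsto\bar z$ and $z\mapsto\zeta z$) then pins the constant of integration to be zero. I would phrase everything in terms of $f_n$ itself, so that the possible multivaluedness of the holomorphic lift $F$ causes no trouble.

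First I would establish the conjugation symmetry. Writing $a_0=\imag b_0$, $a_1=\imag b_1$ and $a_2=b_2$, where $b_0,b_1,b_2$ are rational functions of $z$ with real coefficients (clear from \eqref{eq:alpha}), one reads off $\overline{a_j(\bar z)}=\epsilon_j\,a_j(z)$ with $(\epsilon_0,\epsilon_1,\epsilon_2)=(-1,-1,1)$, that is, $\overline{a(\bar z)}=S\,a(z)$. The holomorphic function $H(z):=\overline{F(\bar z)}$ then satisfies $H'(z)=\overline{a(\bar z)}=S\,a(z)=S\,F'(z)$, so the single-valued vector-valued function $\Re(H)-S\,f_n$ has vanishing differential on the domain; since $\Re(H(z))=\Re\,\overline{F(\bar z)}=f_n(\bar z)$, evaluating at $z=0$ yields $f_n(\bar z)=S\,f_n(z)$.

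Next I would establish the rotation symmetry. Because $\zeta^n=1$, the denominator $(z^n-1)^2$ is invariant under $z\mapsto\zeta z$, while $(\zeta z)^{n-1}=\zeta^{-1}z^{n-1}$ and $(\zeta z)^{2n-2}=\zeta^{-2}z^{2n-2}$. A direct substitution gives $\zeta\,a_0(\zeta z)=a_0(z)$ at once, and for the other two components I would compute $\zeta\,a_1(\zeta z)$ and $\zeta\,a_2(\zeta z)$ and compare them against $\cos\frac{2\pi}{n}\,a_1+\sin\frac{2\pi}{n}\,a_2$ and $-\sin\frac{2\pi}{n}\,a_1+\cos\frac{2\pi}{n}\,a_2$, using the identities $\cos\frac{2\pi}{n}\,\imag-\sin\frac{2\pi}{n}=\imag\zeta$ and $\cos\frac{2\pi}{n}\,\imag+\sin\frac{2\pi}{n}=\imag\zeta^{-1}$ to match the prefactors of $1$ and of $z^{2n-2}$ in the numerators. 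This gives precisely $\zeta\,a(\zeta z)=R\,a(z)$. Setting $G(z):=F(\zeta z)$, one has $G'(z)=\zeta\,a(\zeta z)=R\,a(z)=R\,F'(z)$, so $\Re(G)-R\,f_n$ has vanishing differential and vanishes at $z=0$; hence $f_n(\zeta z)=R\,f_n(z)$.

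The per-component evaluations are routine. The main obstacle I anticipate is twofold: first, the appeal to the single-valuedness of $f_n$ (the period condition already verified for $(g_n,\omega_n)$), which is exactly what lets equality of differentials together with agreement at $z=0$ force equality of the maps rather than equality up to a period; and second, the trigonometric bookkeeping in the rotation case, where $\zeta^{\pm1}$ must be correctly related to the $\cos\frac{2\pi}{n}$ and $\sin\frac{2\pi}{n}$ entries of $R$. I expect this last matching to be the most error-prone step, though it is entirely elementary.
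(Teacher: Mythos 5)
Your proposal is correct and follows essentially the same route as the paper: both derive the pullback identities $\overline{\alpha(\bar z)}=S\alpha(z)$ and $\alpha(\zeta z)=R\alpha(z)$ (your $\zeta\,a(\zeta z)=R\,a(z)$ is exactly the latter at the level of $1$-forms) and then integrate from the base point $z=0$ using $F(0)=0$. Your added care in phrasing the integration step through the single-valued map $f_n$ rather than the lift $F$ (whose logarithmic terms make it multivalued up to imaginary periods) is a minor refinement of, not a departure from, the paper's argument.
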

\begin{proof}
 Since
 \[
   R=\frac{1}{2} \left(
     \begin{array}{ccc}
      2 & 0 & 0 \\
      0 & \zeta +\zeta^{-1} & -\imag \left(\zeta -\zeta^{-1}\right) \\
      0 & \imag \left(\zeta -\zeta^{-1}\right) & \zeta+\zeta^{-1} \\
     \end{array}
   \right),
 \]
 the $1$-form $\alpha$ in \eqref{eq:alpha} satisfies
 \[
   \overline{\alpha(\bar z)}=
     S\alpha(z),
    \qquad
    \alpha(\zeta z)=
       R\alpha(z),
 \]
 where $\alpha$ is considered as a column vector-valued 
 $1$-form.
 Since $F(0)=0$, we have $\overline{F(\bar z)}=S F(z)$
 and $F(\zeta z)=R F(z)$.
 In particular, we have  the relations
 $f_n(\bar z)=S f_n(z)$ and
 $f_n(\zeta z)=R f_n(z)$.
\end{proof}

\begin{lemma}\label{lem:lift-explicit}
 Up to a suitable translation in $\C^3$ by a vector in
$\imag \R^3$,
 the holomorphic lift
 $F=(X_0,X_1,X_2)$ of the Jorge-Meeks type maximal 
 surface $f_n$ has the following expression{\rm:}
 \begin{align}\label{eq:X0}
  X_0 &= \frac{2\imag}{n(z^n-1)},\\
  \label{eq:X1}
  X_1 &= 
         -\imag
            \left(
              \frac{z(z^{n-2}+1)}{n(z^n-1)}
                 +\frac{n-1}{n^2}
                   \sum_{j=1}^{n-1}
                  (\zeta^j-\zeta^{-j})\log (z-\zeta^j)\right),\\
  \label{eq:X2}
  X_2 &= 
           -\frac{z(z^{n-2}-1)}{n(z^n-1)}
             +\frac{n-1}{n^2}\sum_{j=0}^{n-1}(\zeta^j+\zeta^{-j})
              \log(z-\zeta^j).
 \end{align}
\end{lemma}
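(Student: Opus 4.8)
The plan is to verify the claimed formulas by differentiation rather than by integrating $\alpha$ from scratch. Since the holomorphic lift is characterized by $F'=a$ (where $\alpha=a\,dz$ in \eqref{eq:alpha}) together with the value of $F$ at a base point, it suffices to check that the derivative of each proposed component $X_k$ equals $a_k$, and then to identify the constant of integration and show it lies in $\imag\R^3$, which is exactly the ambiguity allowed in the statement and which leaves $f_n=\Re F$ unchanged. I would first dispose of $X_0$, which is immediate: differentiating $2\imag/\bigl(n(z^n-1)\bigr)$ gives $-2\imag z^{n-1}/(z^n-1)^2=a_0$, and its value $-2\imag/n$ at $z=0$ is purely imaginary.

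The real work is in $X_1$ and $X_2$, and it is a partial-fraction computation. For each I would split the proposed expression into its rational first summand and its logarithmic sum, differentiate, and combine everything over the common denominator $(z^n-1)^2$. A direct expansion of the derivative of the rational part, combined with the target $a_1$ (resp.\ $a_2$), collapses the required identity to the two residue identities
\[
 \frac1n\sum_{j=1}^{n-1}\frac{\zeta^j-\zeta^{-j}}{z-\zeta^j}
   =\frac{1-z^{n-2}}{z^n-1},
 \qquad
 \frac1n\sum_{j=0}^{n-1}\frac{\zeta^j+\zeta^{-j}}{z-\zeta^j}
   =\frac{z^{n-2}+1}{z^n-1}.
\]
These I would prove by computing the residue of each right-hand side at the simple pole $z=\zeta^j$: using $\zeta^{jn}=1$ one has $\zeta^{j(n-2)}=\zeta^{-2j}$ and $\zeta^{j(n-1)}=\zeta^{-j}$, so the residues equal $(\zeta^j-\zeta^{-j})/n$ and $(\zeta^j+\zeta^{-j})/n$ respectively, matching the coefficients above (in the first identity the $j=0$ term is absent since $\zeta^0-\zeta^0=0$).

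Finally I would pin down the constant. Having matched derivatives, the proposed $(X_0,X_1,X_2)$ differs from $\int_0^z\alpha$ by a constant vector in $\C^3$, so it remains to check this vector lies in $\imag\R^3$. I would evaluate the proposed expressions at $z=0$, where the rational parts vanish and, choosing branches of $\log(z-\zeta^j)$ near the base point, each $\log(-\zeta^j)$ may be taken purely imaginary; a short reality-type check (the factor $\zeta^j-\zeta^{-j}$ is imaginary and sits behind the overall $-\imag$ in $X_1$, while $\zeta^j+\zeta^{-j}$ is real in $X_2$) then gives $X_k(0)\in\imag\R$.

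I expect the main obstacle to be not the residue identities, which are clean, but the double-pole bookkeeping: recognizing that the rational first terms of $X_1$ and $X_2$ are precisely what absorbs the $(z-\zeta^j)^{-2}$ parts of $a_1,a_2$, so that only the simple-pole (logarithmic) contributions remain. Care with the branches of the logarithm is also needed, since $f_n$ lives on the universal cover and the proposed expression is genuinely multivalued; but for this lemma only the local comparison near $z=0$ is required.
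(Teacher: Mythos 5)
Your proposal is correct and takes essentially the same approach as the paper: both verify the formulas by differentiating the claimed expressions against $a_1,a_2$, and your two residue identities are, after clearing denominators by $n(z^n-1)$, exactly the paper's assertions $\phi\equiv 0$ and $\psi\equiv 0$, which the paper proves by observing that these polynomials of degree at most $n-1$ vanish at all $n$-th roots of unity --- the very same evaluation $\left.\frac{z^n-1}{z-\zeta^j}\right|_{z=\zeta^k}=n\zeta^{-k}$ that underlies your residue computation. Your explicit check that the integration constant at $z=0$ lies in $\imag\R^3$ (imaginary logs against the imaginary factors $\zeta^j-\zeta^{-j}$ and real factors $\zeta^j+\zeta^{-j}$) is a small completeness bonus that the paper leaves implicit in the phrase ``up to a suitable translation.''
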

\begin{proof}
 The first identity \eqref{eq:X0} is obvious.
To prove the second identity \eqref{eq:X1}, 
we will show that differentiation of the 
right-hand side of \eqref{eq:X1} is equal to
 $a_1(z)$.
Denoting the right-hand side of \eqref{eq:X1} by $\hat X_1$,
we have that
 \[
\frac{d \hat X_1}{dz} - a_1(z)=
    \frac{d \hat X_1}{dz}
    -\frac{\imag(1+z^{2 n-2})}{\left(z^n-1\right)^2}\\
     =
- \imag\frac{(n-1) \phi(z)}{n^2 \left(z^n-1\right)},
 \]
 where we set 
 \[
    \phi(z):=n \left(z^{n-2}-1\right)
     + 
     \sum _{j=1}^{n-1} \frac{(\zeta^j-\zeta^{-j})(z^n-1)}{z-\zeta^j}. 
 \]
For $z=\zeta^k$ ($k=0,1,2, \dots , n-1$),
\begin{equation}\label{eq:phi-zeros}
 \begin{aligned}
 \phi(\zeta^k) &= n \left(\zeta^{-2k}-1\right)
     + 
     \sum _{j=1}^{n-1} (\zeta^j-\zeta^{-j})
\left. \frac{z^n-1}{z-\zeta^j} \right|_{z=\zeta^k}\\
&=
 n \left(\zeta^{-2k}-1\right)
      +\left(\zeta^{k}-\zeta^{-k}\right) n\zeta^{-k} \\
&=0.
\end{aligned}
\end{equation}
Here we have used the following identity:
 \[
    \left.
     \frac{z^n-1}{z-\zeta^j}\right|_{z=\zeta^k}=
\begin{cases}
0 & \text{ if $j\ne k$}  \\
     \left.\dfrac{d}{dz}\left(z^n-1\right)\right|_{z=\zeta^k}
      =n \zeta^{-k} & \text{ if $j=k$. }
\end{cases}
 \]
\eqref{eq:phi-zeros} means that the number of zeros for $\phi(z)$ is at least $n$. 
However, $\phi(z)$ is a polynomial in $z$ of degree at most $n-1$.
So we conclude that $\phi$ vanishes identically, and hence 
$d \hat X_1/dz - a_1(z)=0$. 
 
 Similarly, 
denoting the right-hand side of \eqref{eq:X2} by $\hat X_2$, we have
 \[
\frac{d \hat X_2}{dz} - a_2(z)=
   \frac{d \hat X_2}{dz}
      +\frac{1-z^{2 n-2}}{\left(z^n-1\right)^2}=
     -\frac{(n-1) \psi(z)}{n^2 \left(z^n-1\right)},
 \]
 where $\psi(z)$ is a polynomial of degree at most $n-1$ given by 
 \[
    \psi(z):=n \left(z^{n-2}+1\right)-
       \sum _{j=0}^{n-1} 
      \frac{(\zeta^{-j}+\zeta^j)(z^n-1)}{z-\zeta^j}.
 \]
 It can be easily checked that
 $\psi(\zeta^k)=0$ for each $k=0,1,2,\dots,n-1$. 
These prove that $d \hat X_2/dz - a_2(z)=0$, and thus \eqref{eq:X2} is verified. 
\end{proof}

Using Lemma \ref{lem:lift-explicit}, we obtain an integration-free formula of 
$f_n$ as follows.  

\begin{proposition}\label{prop:x012}
 The Jorge-Meeks type maximal surface $f_n=(x_0,x_1,x_2)$
 has the following expressions{\rm:}
 \begin{align}
  \label{eq:x0}
    x_0 &=\frac{2r^n\sin n\theta}{n(r^{2n}-2 r^n \cos n\theta+1)},\\
  \label{eq:x1}
    x_1 &=-\frac{(r^{2n-1}+r)\sin\theta+(r^{n+1}+r^{n-1})\sin  (n-1)\theta}
               {n(r^{2n}-2r^n\cos n\theta+1)} \\
     \nonumber
     &\phantom{aaaaaaaaaaaaa}+
       \frac{n-1}{n^2}
       \sum_{j=1}^{n-1}
        \log\left(
            r^2 - 2 r \cos\left(\theta-\frac{2\pi j}{n}\right)+1
            \right)
        \sin\frac{2\pi j}{n},\\
  \label{eq:x2}
    x_2&= 
        \frac{-(r^{2n-1}+r)\cos\theta+(r^{n+1}+r^{n-1})\cos(n-1)\theta}
               {n(r^{2n}-2r^n\cos n\theta+1)} \\
     \nonumber
       &\phantom{aaaaaaaaaaaa}+
       \frac{n-1}{n^2}
        \sum_{j=0}^{n-1}
           \log\left(
               r^2 - 2 r \cos\left(\theta-\frac{2\pi j}{n}\right)+1
           \right)
             \cos\frac{2\pi j}{n},
 \end{align}
 where $z=re^{\imag\theta}$.
\end{proposition}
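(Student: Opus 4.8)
The plan is to extract real and imaginary parts from the explicit holomorphic lift $F=(X_0,X_1,X_2)$ provided by Lemma~\ref{lem:lift-explicit}, since $f_n=\Re(F)$ and the translation ambiguity lives in $\imag\R^3$ and hence does not affect the real part. The whole proposition is therefore a matter of systematically computing $\Re(X_0)$, $\Re(X_1)$, $\Re(X_2)$ after substituting $z=re^{\imag\theta}$, and the main work is bookkeeping rather than any genuine difficulty.

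For the first coordinate, I would write $z^n=r^n e^{\imag n\theta}$ so that $z^n-1=(r^n\cos n\theta-1)+\imag r^n\sin n\theta$, and then
\[
  X_0=\frac{2\imag}{n(z^n-1)}
     =\frac{2\imag\,\overline{(z^n-1)}}{n\,|z^n-1|^2},
\]
with $|z^n-1|^2=r^{2n}-2r^n\cos n\theta+1$. Taking the real part immediately yields \eqref{eq:x0}. For the rational part of $X_1$ and $X_2$, the common factor is $z(z^{n-2}\pm1)/(n(z^n-1))$, which I would again rationalize by multiplying numerator and denominator by $\overline{z^n-1}$. Expanding $z(z^{n-2}\pm1)\,\overline{(z^n-1)}$ and collecting the powers of $r$ together with the trigonometric factors $\sin\theta,\sin(n-1)\theta$ (respectively $\cos\theta,\cos(n-1)\theta$) reproduces the first fractions in \eqref{eq:x1} and \eqref{eq:x2}; the factor $-\imag$ in front of $X_1$ is what converts the imaginary part of the bracket into the real part of $X_1$.

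For the logarithmic terms I would use $\log(z-\zeta^j)=\log|z-\zeta^j|+\imag\arg(z-\zeta^j)$ together with
\[
  |z-\zeta^j|^2=r^2-2r\cos\!\Big(\theta-\tfrac{2\pi j}{n}\Big)+1,
\]
so that $\Re\log(z-\zeta^j)=\tfrac12\log|z-\zeta^j|^2$. In $X_1$ the prefactor $-\imag(\zeta^j-\zeta^{-j})=-\imag\cdot 2\imag\sin(2\pi j/n)=2\sin(2\pi j/n)$ multiplies the real-valued logarithm (the $\arg$-parts, multiplied by the real prefactor $2\sin(2\pi j/n)$ after the $-\imag$, land in the imaginary part and are absorbed by the $\imag\R^3$ translation), giving exactly the sum in \eqref{eq:x1}; similarly in $X_2$ the real prefactor $(\zeta^j+\zeta^{-j})=2\cos(2\pi j/n)$ multiplies $\Re\log(z-\zeta^j)$ to produce \eqref{eq:x2}. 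The one point requiring a little care, and the closest thing to an obstacle, is tracking exactly which pieces are real and which are imaginary so that the $\arg$-contributions are correctly discarded into the translation freedom; once the factors of $\imag$ are accounted for, the identities follow by direct comparison.
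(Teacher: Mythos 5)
Your proposal is correct and takes essentially the same route as the paper's proof: both extract real parts from the explicit lift of Lemma~\ref{lem:lift-explicit}, rationalize $X_0$ and the rational pieces of $X_1,X_2$ by multiplying with $\overline{z^n-1}$, and handle the logarithms via $|z-\zeta^j|^2=r^2-2r\cos\left(\theta-\frac{2\pi j}{n}\right)+1$ together with the prefactor identities $\zeta^j-\zeta^{-j}=2\imag\sin\frac{2\pi j}{n}$ and $\zeta^j+\zeta^{-j}=2\cos\frac{2\pi j}{n}$. One cosmetic remark: the $\arg$-contributions are discarded simply because they end up in $\Im F$ while $f_n=\Re F$ (the $\imag\R^3$ translation freedom only accounts for the integration constant in the lemma), but since you correctly observe that they land in the imaginary part, your argument is sound.
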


\begin{proof}
 Since
 \begin{align*}
  x_0&=\Re X_0 = -\frac{2\Im(\bar z^n-1)}{n(z^n-1)(\bar z^n-1)}\\
     &=-\frac{2\Im(r^n e^{-\imag n\theta})}{n(r^{2n}-2r^n\cos n\theta+1)}
    =\frac{2r^n\sin n\theta}{n(r^{2n}-2r^n \cos n\theta +1)},
 \end{align*}
 the first identity \eqref{eq:x0}
 is obtained.
 Similarly, one can easily check that
 \begin{align*}
   \Re\left(-\frac{\imag z \left(z^{n-2}+1\right)}{%
          n \left(z^n-1\right)}\right)
   &=-\frac{\left(r^{2 n-1}+r\right)\sin \theta 
                +\left(r^{n+1}+r^{n-1}\right)\sin(n-1)\theta}{
            n \left(r^{2 n}-
            2 r^n \cos n\theta+1\right)},\\
   \Re\left(
     -\frac{z \left(z^{n-2}-1\right)}{n \left(z^n-1\right)}
      \right)&=
        \frac{-(r^{2n-1}+r)\cos\theta
               +(r^{n+1}+r^{n-1})\cos(n-1)\theta}{
              n(r^{2n}-2r^n\cos n\theta+1)}.
 \end{align*}
 On the other hand,
 \begin{align*}
  \Im&
      \sum_{j=1}^{n-1}(\zeta^j-\zeta^{-j})\log(z-\zeta^j)
  =
  \sum_{j=1}^{n-1}
   2\sin\frac{2\pi j}{n}\log |z-\zeta^j|\\
  & =
   \sum_{j=1}^{n-1}
   \sin\frac{2\pi j}{n}\log |z-\zeta^j|^2
  =
   \sum_{j=1}^{n-1}
   \sin\frac{2\pi j}{n}\log\bigl((z-\zeta^j)(\bar z-\zeta^{-j})\bigr)\\
  &=
  \sum_{j=1}^{n-1}
    \log
    \left(
      r^2 - 2r\cos\left(\theta-\frac{2\pi j}{n}\right)+1
    \right)   \sin\frac{2\pi j}{n},
 \end{align*}
 which proves \eqref{eq:x1}.
Similarly, we have \eqref{eq:x2}.
\end{proof}

The following assertion is an immediate consequence of
Proposition \ref{prop:x012}.

\begin{corollary}\label{rem:why-fold}
$f_n$ satisfies the identity
\begin{equation}
 f_n(1/r, \theta) = f_n(r, \theta)
\qquad (r>0,\,\, 0\le \theta<2\pi).
\end{equation}
\end{corollary}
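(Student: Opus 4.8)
The plan is to exploit the fact that Proposition~\ref{prop:x012} already furnishes $x_0$, $x_1$, $x_2$ as explicit functions of $(r,\theta)$, so that the corollary reduces to substituting $r\mapsto 1/r$ into each of \eqref{eq:x0}, \eqref{eq:x1}, \eqref{eq:x2} and checking that every component returns to its original value. I would organize the verification by splitting each formula into its \emph{rational} part and its \emph{logarithmic} part, since these two kinds of terms transform by rather different mechanisms under $r\mapsto 1/r$.

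For the rational parts --- namely all of $x_0$ and the leading fractions in $x_1$ and $x_2$ --- the mechanism is homogeneity. After substituting $r\mapsto 1/r$, I would multiply numerator and denominator of each fraction by $r^{2n}$. Inspection of \eqref{eq:x0}, \eqref{eq:x1} and \eqref{eq:x2} shows that the common denominator $r^{2n}-2r^n\cos n\theta+1$ is palindromic in the powers of $r$ and is sent to itself by this operation, while each numerator (for instance $(r^{2n-1}+r)\sin\theta+(r^{n+1}+r^{n-1})\sin(n-1)\theta$) is likewise stable because its exponents occur in pairs summing to $2n$. Hence every rational term is invariant; this is the routine half of the computation.

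The genuinely substantive point concerns the logarithmic terms. Replacing $r$ by $1/r$ inside a single summand gives
\[
 \log\!\left(r^{-2}-2r^{-1}\cos\!\left(\theta-\tfrac{2\pi j}{n}\right)+1\right)
 =\log\!\left(r^{2}-2r\cos\!\left(\theta-\tfrac{2\pi j}{n}\right)+1\right)-2\log r,
\]
because the argument on the left equals the argument on the right divided by $r^{2}$. Thus each logarithmic sum recovers exactly the collection of terms it had before, plus an error term equal to $-2\log r$ times $\sum_{j}\sin\frac{2\pi j}{n}$ in the case of $x_1$, and $-2\log r$ times $\sum_{j}\cos\frac{2\pi j}{n}$ in the case of $x_2$. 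I would then finish by invoking $\sum_{j=0}^{n-1}\zeta^{\,j}=0$: its imaginary part over $j=1,\dots,n-1$ yields $\sum_{j=1}^{n-1}\sin\frac{2\pi j}{n}=0$, and its real part over $j=0,\dots,n-1$ yields $\sum_{j=0}^{n-1}\cos\frac{2\pi j}{n}=0$. Both error terms therefore vanish, and the logarithmic parts are invariant as well.

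The only place requiring care --- and what I regard as the heart of the argument rather than a genuine obstacle --- is this cancellation of the stray $\log r$ contributions. It is precisely the choice of weights $\sin\frac{2\pi j}{n}$ and $\cos\frac{2\pi j}{n}$, together with their summing to zero over the $n$-th roots of unity, that makes the identity hold; without this the logarithmic part would acquire an unwanted additive multiple of $\log r$. Once invariance of all three components is established, the equality $f_n(1/r,\theta)=f_n(r,\theta)$ follows at once.
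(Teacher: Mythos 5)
Your proposal is correct and is exactly the argument the paper intends: the paper states the corollary as an ``immediate consequence of Proposition~\ref{prop:x012}'', i.e.\ direct substitution of $r\mapsto 1/r$ into the explicit formulas \eqref{eq:x0}--\eqref{eq:x2}, which you simply carry out in full. Your two mechanisms --- palindromic exponents in the rational parts, and cancellation of the stray $-2\log r$ terms via $\sum_{j=1}^{n-1}\sin\frac{2\pi j}{n}=0$ and $\sum_{j=0}^{n-1}\cos\frac{2\pi j}{n}=0$ --- are precisely the details the paper leaves to the reader (the latter sum identity even appears explicitly in the paper's proof of Corollary~\ref{lem:x012}).
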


Since $f(r,\theta)$ is 
invariant under the symmetry
$r \mapsto 1/r$,
the singular set $\{|z|=1\}$ of $f$
coincides with
the fixed point set under the symmetry.
We remark that
the set $\{|z|=1\}$ consists of 
non-degenerate fold singularities as in
\cite{FKKRSUYY2}.
So, it is natural to introduce a 
new variable $u$ by
 \begin{equation}\label{eq:u-r}
    u := \frac{r+r^{-1}}{2},
\end{equation}
which is invariant under the symmetry
$r \mapsto 1/r$.
We set
$$
\bar D_1^*:=\{z\in \C\,;\, 0<|z|\le 1\}.
$$
By Corollary \ref{rem:why-fold},
$f(\bar D_1^*\setminus\{
1,\zeta,\dots,\zeta^{n-1}
\})$ coincides
with the whole image of $f$.
To obtain the analytic extension of $f$,
we define an analytic map
$$
\iota:
\bar D_1^*\ni z=r e^{\imag \theta} \mapsto 
(\frac{r+r^{-1}}2,\theta)\in 
\R\times \R/2\pi \Z.
$$
The image of the map $\iota$ is
given by
$$
\hat \Omega_n:=\{(u,\theta)\in 
\R\times \R/2\pi \Z\,;\,
u \ge 1
\}.
$$
The map $\iota$ is bijective,
whose inverse is given by 
$$
\iota^{-1}:
\R\times \R/2\pi \Z\ni (u,\theta)
\mapsto 
(u-\sqrt{u^2-1},\theta)\in \bar D_1^*.
$$
Using the Chebyshev polynomials, 
the formulas \eqref{eq:x0}--\eqref{eq:x2} can be 
rewritten  in terms of $(u,\theta)$ 
as follows
(see the appendix for the definition and basic properties 
of the Chebyshev polynomials). 

\begin{corollary}\label{lem:x012}
By setting $\tilde f_n=f_n\circ \iota^{-1}$
and $\tilde f_n=(\tilde x_0,\tilde x_1,\tilde x_2)$,
it holds that
 \begin{align}
 \label{eq:u0}
   \tilde x_0 &
    =\frac{\sin n\theta}{n(T_n(u)-\cos n\theta)},\\
 \label{eq:u1}
   \tilde x_1 &
    =-\frac{T_{n-1}(u)\sin \theta+u \sin (n-1)\theta}{%
           n(T_n(u)-\cos n\theta)}\\
            \nonumber
    &\phantom{aaaaaaaaaaaaa}+
      \frac{n-1}{n^2}
       \sum_{j=1}^{n-1}
              \log\left(
                 u- \cos\left(\theta-\frac{2\pi j}{n}\right)
              \right)
         \sin\frac{2\pi j}{n},\\
 \label{eq:u2}
   \tilde x_2&
    = \frac{-T_{n-1}(u)\cos \theta +u \cos (n-1)\theta}
               {n(T_n(u)-\cos n\theta)} \\
           \nonumber
   &\phantom{aaaaaaaaaaaa}+
      \frac{n-1}{n^2}
       \sum_{j=0}^{n-1}
              \log\left(
                 u- \cos\left(\theta-\frac{2\pi j}{n}\right)
              \right)
         \cos\frac{2\pi j}{n},
 \end{align}
where $T_n(u)$, $T_{n-1}(u)$ denote the first Chebyshev polynomials in 
the variable $u$ 
of degree $n$, $n-1$, respectively.
\end{corollary}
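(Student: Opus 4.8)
The plan is to substitute $u=(r+r^{-1})/2$ directly into the formulas \eqref{eq:x0}--\eqref{eq:x2} and recognize every symmetric combination $r^m+r^{-m}$ as a value of a Chebyshev polynomial. The single property I would invoke from the appendix is
\[
   T_m\!\left(\frac{r+r^{-1}}{2}\right)=\frac{r^m+r^{-m}}{2},
\]
which follows from the defining relation $T_m(\cos\phi)=\cos(m\phi)$ upon setting $r=e^{\imag\phi}$ (equivalently, over the reals, from $T_m(\cosh t)=\cosh(mt)$ with $r=e^t$). Everything else is algebraic rearrangement.

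First I would treat the rational (non-logarithmic) parts. Factoring $r^n$ out of the common denominator gives
\[
   r^{2n}-2r^n\cos n\theta+1=2r^n\bigl(T_n(u)-\cos n\theta\bigr),
\]
so the factor $r^n$ cancels against the $2r^n$ in the numerator of \eqref{eq:x0}, yielding \eqref{eq:u0} at once. For the first terms of \eqref{eq:x1} and \eqref{eq:x2}, dividing numerator and denominator by $r^n$ turns $r^{2n-1}+r$ into $2T_{n-1}(u)$ and $r^{n+1}+r^{n-1}$ into $2u$; reassembling produces precisely the rational parts of \eqref{eq:u1} and \eqref{eq:u2}.

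For the logarithmic terms I would use the factorization
\[
   r^2-2r\cos\phi+1=2r\,(u-\cos\phi),
\]
so that, with $\phi=\theta-2\pi j/n$, one has $\log(r^2-2r\cos\phi+1)=\log 2+\log r+\log(u-\cos\phi)$. The step needing a short argument is that the spurious constant $\log 2+\log r$ drops out after summation: in \eqref{eq:x1} it is weighted by $\sin(2\pi j/n)$ and summed over $j=1,\dots,n-1$, and in \eqref{eq:x2} by $\cos(2\pi j/n)$ and summed over $j=0,\dots,n-1$. Since $\sum_{j=0}^{n-1}\zeta^j=0$, taking imaginary and real parts gives $\sum_{j=1}^{n-1}\sin(2\pi j/n)=0$ and $\sum_{j=0}^{n-1}\cos(2\pi j/n)=0$ respectively, so both constant contributions vanish and only the $\log(u-\cos(\theta-2\pi j/n))$ terms survive. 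This yields the logarithmic parts of \eqref{eq:u1} and \eqref{eq:u2}, completing the identification.

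I do not expect a genuine obstacle: the content reduces to the Chebyshev identity together with the vanishing of the root-of-unity sums, and the rest is bookkeeping. The only point worth stating with care is the cancellation of the $\log r$ term, for without it the resulting expressions would fail to depend on $u$ and $\theta$ alone, which is exactly what is needed for $\tilde f_n=f_n\circ\iota^{-1}$ to be well defined as a function on $\hat\Omega_n$.
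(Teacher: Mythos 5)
Your proposal is correct and follows essentially the same route as the paper's proof: both reduce the rational parts via the identity \eqref{eq:T2}, i.e.\ $T_m\bigl((r+r^{-1})/2\bigr)=(r^m+r^{-m})/2$, and both handle the logarithmic parts through the factorization $r^2-2r\cos\phi+1=2r(u-\cos\phi)$ combined with the vanishing of the root-of-unity sums $\sum_{j=1}^{n-1}\sin(2\pi j/n)=0$ and $\sum_{j=0}^{n-1}\cos(2\pi j/n)=0$ to discard the spurious $\log 2r$ terms. Your closing remark about the $\log r$ cancellation being what makes the expression a function of $(u,\theta)$ alone is exactly the point implicit in the paper's computation.
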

\begin{proof}
 Since
 \[
    x_0 =\frac{2r^n\sin n\theta}{n(r^{2n}-2 r^n \cos n\theta+1)}
         = \frac{\sin n \theta}{%
             n\left(\frac12(r^n+r^{-n})-\cos n \theta\right)},
 \]
 \eqref{eq:T2} in the appendix yields \eqref{eq:u0}.
 Similarly, the first terms of \eqref{eq:x1} and \eqref{eq:x2}
 are the same as the first terms of \eqref{eq:u1} and \eqref{eq:u2},
 respectively.
 On the other hand,
 \begin{align*}
  \sum_{j=1}^{n-1}&
    \log\left(r^2-2 r \cos\left(\theta-\frac{2\pi j}{n}\right) +1 \right)
    \sin\frac{2\pi j}{n}\\
   &=
  \sum_{j=1}^{n-1}
    \log
       \left(2 r \left(\frac{r+r^{-1}}{2}-\cos\left(\theta-\frac{2\pi
                   j}{n}\right)\right)\right)
    \sin\frac{2\pi j}{n}\\
   &=
     \sum_{j=1}^{n-1}
         \log \left(u-\cos\left(\theta-\frac{2\pi j}{n}\right)\right)
           \sin\frac{2\pi j}{n}
        +
          \log 2 r \sum_{j=1}^{n-1}
           \sin\frac{2\pi j}{n}.
 \end{align*}
 Then we have \eqref{eq:u1} because
 \[
    \sum_{j=1}^{n-1} \sin\frac{2\pi j}{n}
    =\Im
          \sum_{j=0}^{n} \zeta^j
      =0.
 \]
 Similarly, we have \eqref{eq:u2}.
\end{proof}

If we consider $\tilde f_n$ instead of $f_n$,
the origin $z=0$ in the source space of 
$f_n$ does not lie in that of $\tilde f_n$.
To indicate what the origin in the old
comlex coordinate $z$ becomes in the
new real coordinates $(u,\theta)$,
we attach a new point $p_{\infty}$
to $\hat \Omega_n$
as the `point at infinity',
and extend the map $\iota$ so that
$$
\iota(0)=p_\infty.
$$
Hence we have a one-to-one correspondence 
between $\{|z| \le 1 \}$ 
and $\hat \Omega_n\cup\{p_\infty\}$.
In particular, 
$\hat \Omega_n\cup\{p_\infty\}$ can be considered as  an analytic $2$-manifold.
We prove the following:

\begin{proposition}\label{prop:extf}
The map $\tilde f_n:\hat\Omega_n\cup\{p_\infty\}\to \R^3_1$ 
can be analytically extended to the domain
\begin{equation}\label{eq:omega}
  \Omega_n :=
   \left\{
    (u,\theta)\in \R\times \R/2\pi\Z\,;\,
      u> \max_{j=0,\dots,n-1}
             \left[\cos\left(\theta-\frac{2\pi j}n\right)\right]
     \right\}\cup \{p_\infty\}, 
\end{equation}%
\end{proposition}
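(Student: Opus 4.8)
The plan is to read off the analyticity directly from the closed-form expressions \eqref{eq:u0}--\eqref{eq:u2} of Corollary \ref{lem:x012}, after isolating the one genuine source of trouble, which lies in the denominators. Each of $\tilde x_0,\tilde x_1,\tilde x_2$ is a sum of two kinds of terms: a \emph{rational part}, whose numerator is a polynomial in $u$ times a trigonometric function of $\theta$ (hence entire in $(u,\theta)$) and whose denominator is $n\bigl(T_n(u)-\cos n\theta\bigr)$; and a \emph{logarithmic part} $\frac{n-1}{n^2}\sum_j \log\bigl(u-\cos(\theta-\tfrac{2\pi j}{n})\bigr)$ times $\sin$ or $\cos$ of $2\pi j/n$. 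The logarithmic part is manifestly real-analytic wherever every argument $u-\cos(\theta-\tfrac{2\pi j}{n})$ is positive, i.e. where $u>\max_{j}\cos(\theta-\tfrac{2\pi j}{n})$, which is precisely the defining inequality of $\Omega_n$. So the whole issue is to control the rational part.

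The key step is the factorization
\[
  T_n(u)-\cos n\theta
   =2^{\,n-1}\prod_{m=0}^{n-1}\Bigl(u-\cos\bigl(\theta-\tfrac{2\pi m}{n}\bigr)\Bigr).
\]
To prove it I would fix $\theta$ and regard both sides as polynomials of degree $n$ in $u$. The leading coefficient of $T_n$ is $2^{\,n-1}$, matching the right-hand side. Using $T_n(\cos\phi)=\cos n\phi$ (recalled in the appendix), each $u_m:=\cos(\theta-\tfrac{2\pi m}{n})$ satisfies $T_n(u_m)=\cos(n\theta-2\pi m)=\cos n\theta$, so every $u_m$ is a root of the left-hand side; for generic $\theta$ the $u_m$ are distinct and there are $n$ of them, forcing the identity, and the remaining values of $\theta$ follow by continuity. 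With this in hand, on $\Omega_n$ we have $u>\cos(\theta-\tfrac{2\pi m}{n})$ for every $m$, so each factor $u-\cos(\theta-\tfrac{2\pi m}{n})$ is positive and hence $T_n(u)-\cos n\theta>0$. Thus the denominators never vanish on $\Omega_n$, the rational parts are real-analytic there, and combining with the logarithmic parts, $\tilde f_n$ extends real-analytically to $\Omega_n\setminus\{p_\infty\}$.

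It remains to treat the added point $p_\infty$. Recall that $\hat\Omega_n\cup\{p_\infty\}$ carries the analytic structure transported from $\{|z|\le 1\}$ by $\iota$, under which $\tilde f_n$ corresponds to $f_n=\Re(F)$ and $p_\infty$ to $z=0$. By the explicit formulas \eqref{eq:X0}--\eqref{eq:X2} of Lemma \ref{lem:lift-explicit}, $F$ is holomorphic in a full neighborhood of $z=0$: the poles of the rational pieces sit at the $n$-th roots of unity and the branch points of the logarithms sit at $z=\zeta^j\ne 0$. Hence $f_n$, and therefore $\tilde f_n$, is real-analytic at $p_\infty$. Since a neighborhood of $p_\infty$ in $\Omega_n$ is $\{u>M\}\cup\{p_\infty\}$ for large $M$, corresponding to a punctured disk about $z=0$, this chart-based extension agrees with the formula-based extension of the previous paragraph on the connected overlap $\hat\Omega_n$, and the two patch together to a single real-analytic map on all of $\Omega_n$.

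I expect the middle step to be the main obstacle. A priori the rational parts of \eqref{eq:u0}--\eqref{eq:u2} can blow up wherever $T_n(u)=\cos n\theta$, and such points genuinely occur for $u<1$, so they cannot simply be ignored. The factorization is exactly what shows that all these zeros lie on $\{u=\cos(\theta-\tfrac{2\pi m}{n})\}$ — that is, on or below the boundary of $\Omega_n$ — so that $\Omega_n$ is in fact the largest domain on which all of these formulas are simultaneously regular, and the extension is as large as the expressions permit.
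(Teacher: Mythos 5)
Your proposal is correct and follows essentially the same route as the paper: the proof there also reduces the regularity of \eqref{eq:u0}--\eqref{eq:u2} on $\Omega_n$ to the positivity of the factors $u-\cos\bigl(\theta-\tfrac{2\pi j}{n}\bigr)$ via the factorization $T_n(u)-\cos n\theta=2^{n-1}\prod_{j=0}^{n-1}\bigl(u-\cos\bigl(\theta-\tfrac{2\pi j}{n}\bigr)\bigr)$, which is exactly Lemma \ref{lemma:Phi} of the appendix, proved there by the same root-counting argument you give. Your explicit verification of analyticity at $p_\infty$ through the $z$-chart is a harmless extra; the paper treats this as automatic since the analytic structure at $p_\infty$ is by definition transported from $\{|z|\le 1\}$, where $f_n=\Re(F)$ is real-analytic at $z=0$.
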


\begin{proof}
In fact, \eqref{eq:u0}--\eqref{eq:u2} are meaningful if  
\begin{equation}\label{eq:meaningful}
 T_n(u)-\cos n \theta >0 \text{ and }
u > \cos \left( \theta -\frac{2 \pi j}{n} \right) \ (j=0,1, \dots , n-1).  
\end{equation}
Moreover, $T_n(u)-\cos n\theta$ is factorized as 
(cf. Lemma \ref{lemma:Phi} in the appendix)
\begin{equation}\label{eq:tnu-cos}
 T_n(u)-\cos n\theta  
       =2^{n-1}\prod_{j=0}^{n-1}
         \left(u-\cos\left(\theta-\frac{2\pi j}{n}\right)\right).
\end{equation}%
So the condition \eqref{eq:meaningful} reduces to 
\begin{equation*}
u > \cos \left( \theta -\frac{2 \pi j}{n} \right) \ (j=0,1, \dots , n-1).  
\end{equation*}
Thus, the components $\tilde x_j(u, \theta)$ of $\tilde f_n$
given in
\eqref{eq:u0}, \eqref{eq:u1} and \eqref{eq:u2}
can be extended to $  \Omega_n$.
\end{proof}

By Proposition \ref{prop:extf},
we may assume that the map $\tilde f_n$
is defined in $\Omega_n$. From now on,
we call this newly obtained analytic map
$$
\tilde f_n \colon \Omega_n \to \R^3_1
$$ 
the \emph{analytic extension} of $f_n$.

\bigskip

\begin{figure}[htb]
 \centering
 \includegraphics[height=5.6cm]{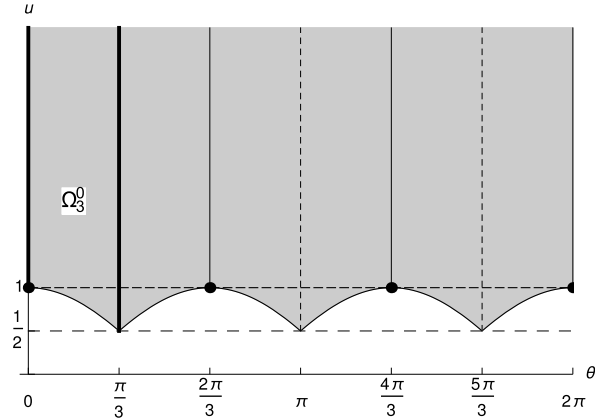} 
 \caption{%
  The domain $\Omega_n$ and the fundamental domain
  $\Omega_n^0$ for $n=3$, where
  the region of $u > 1$ is the space-like part
  and the regions of $u < 1$ are the 
time-like parts.}
 \label{fig:02}
\end{figure}

Proposition \ref{prop:symmetry} and the real analyticity of
$\tilde f_n$ imply that
\begin{equation}
 \label{eq:1}
    \tilde f_n(u,-\theta)
      =
    S\tilde f_n(u,\theta),\quad
   \tilde f_n\left(u,\theta+\frac{2\pi}{n}\right)
      =
     R\tilde f_n(u,\theta),
\end{equation}
where $S$ and $R$ are the matrices 
given in \eqref{eq:SR}, and 
$\tilde f_n$ is considered as a 
column vector-valued function.
Let $G$ be the finite isometry group 
of $\R^3_1$ generated by $S$ and $R$.
The subset 
\begin{equation}\label{eq:Omega0}
\Omega_n^0=\left\{ (u, \theta) \mid u>\cos \theta, \ 
0 \le \theta \le \frac{\pi}{n} \right\}
\end{equation}
of $\Omega_n$ is called the {\it fundamental domain} of $\tilde f_n$.
The equation \eqref{eq:u0} yields the following proposition.

\begin{proposition}\label{prop:G}
 The whole image of $\tilde f_n$ can be
 generated by  $\tilde f_n(\Omega_n^0)$
 via the action of $G$, where
 $\Omega_n^0$ is the fundamental domain as in \eqref{eq:Omega0}.
\end{proposition}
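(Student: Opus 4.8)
The plan is to show that the union $\bigcup_{A\in G} A\cdot\tilde f_n(\Omega_n^0)$ equals the whole image $\tilde f_n(\Omega_n)$, and by the symmetry relations \eqref{eq:1} this reduces to a purely combinatorial statement about how the group $G$ acts on the parameter domain $\Omega_n$. Indeed, the two relations in \eqref{eq:1} say precisely that the map $\tilde f_n$ intertwines the geometric $G$-action on $\R^3_1$ with the parameter transformations $\sigma\colon(u,\theta)\mapsto(u,-\theta)$ and $\rho\colon(u,\theta)\mapsto(u,\theta+\tfrac{2\pi}{n})$. Let $\Gamma$ be the group of transformations of the cylinder $\R\times\R/2\pi\Z$ generated by $\sigma$ and $\rho$; this is a dihedral group of order $2n$, acting on the $\theta$-circle by the usual dihedral action while fixing $u$. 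So it suffices to prove that $\Gamma$ carries $\Omega_n^0$ onto a covering of $\Omega_n$, i.e.\ that $\bigcup_{\gamma\in\Gamma}\gamma(\Omega_n^0)=\Omega_n$.

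First I would check that the parameter transformations $\sigma,\rho$ genuinely preserve $\Omega_n$, so that the $\Gamma$-orbit of $\Omega_n^0$ stays inside the domain where $\tilde f_n$ is defined. For $\rho$ this is immediate since the defining inequality $u>\max_j\cos(\theta-\tfrac{2\pi j}{n})$ of \eqref{eq:omega} is invariant under $\theta\mapsto\theta+\tfrac{2\pi}{n}$ (it merely permutes the indices $j$), and the point $p_\infty$ is fixed by both. For $\sigma$ the maximum is likewise preserved because replacing $\theta$ by $-\theta$ sends $\cos(\theta-\tfrac{2\pi j}{n})$ to $\cos(\theta+\tfrac{2\pi j}{n})=\cos(-\theta-\tfrac{2\pi(n-j)}{n})$, so the set of values $\{\cos(\theta-\tfrac{2\pi j}{n})\}_{j}$ is merely reindexed.

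Next I would verify the covering statement at the level of the $\theta$-circle. The fundamental domain restricts $\theta$ to the arc $[0,\tfrac{\pi}{n}]$ of length $\tfrac{\pi}{n}$; the reflection $\sigma$ doubles this to $[-\tfrac{\pi}{n},\tfrac{\pi}{n}]$, and the $n$ rotations by multiples of $\tfrac{2\pi}{n}$ then sweep the full circle $\R/2\pi\Z$, since $n$ arcs of length $\tfrac{2\pi}{n}$ tile it. Thus every $\theta$ is $\Gamma$-equivalent to some $\theta'\in[0,\tfrac{\pi}{n}]$. The only remaining point is to confirm that once $\theta'$ lies in $[0,\tfrac{\pi}{n}]$, the defining inequality of $\Omega_n^0$, namely $u>\cos\theta'$, is equivalent to the defining inequality $u>\max_j\cos(\theta'-\tfrac{2\pi j}{n})$ of $\Omega_n$; this is exactly the assertion that the maximum over $j$ is attained at $j=0$ when $\theta'\in[0,\tfrac{\pi}{n}]$, which follows because $\cos$ is maximized when its argument is closest to $0\pmod{2\pi}$, and for $\theta'$ in this narrow arc the angle $\theta'-\tfrac{2\pi j}{n}$ is closest to $0$ precisely at $j=0$.

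The step I expect to require the most care is this last one: pinning down exactly where $\max_j\cos(\theta-\tfrac{2\pi j}{n})$ is attained as a function of $\theta$, and matching the boundaries of the $\Gamma$-translates so that the fundamental domain $\Omega_n^0$ is honestly a fundamental domain (overlapping only on the boundary walls $\theta=0$ and $\theta=\tfrac{\pi}{n}$) rather than leaving gaps or producing overlaps of positive measure. The factorization \eqref{eq:tnu-cos} should be helpful here, since the positivity of $T_n(u)-\cos n\theta$ together with \eqref{eq:meaningful} ties the single inequality $u>\cos\theta$ on the fundamental domain to the full system of inequalities defining $\Omega_n$; the hint in the statement that \eqref{eq:u0} is the relevant formula suggests tracking the sign of $\sin n\theta$ across the walls to identify the $\Gamma$-action cleanly. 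Once the tiling of $\Omega_n$ by $\Gamma$-translates of $\Omega_n^0$ is established, applying $\tilde f_n$ and using the intertwining relations \eqref{eq:1} yields the claim immediately.
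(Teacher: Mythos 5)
Your proposal is correct and takes essentially the same route as the paper, which states the proposition without a written proof as an immediate consequence of the symmetry relations \eqref{eq:1}: you have simply filled in the routine verification that the dihedral group generated by $(u,\theta)\mapsto(u,-\theta)$ and $(u,\theta)\mapsto\bigl(u,\theta+\frac{2\pi}{n}\bigr)$ preserves $\Omega_n$ (reindexing the $j$'s in the defining maximum) and that its $2n$ translates of $\Omega_n^0$ cover it, with the maximum attained at $j=0$ on the arc $0\le\theta\le\pi/n$. The only harmless imprecision, which your write-up shares with the paper's own statement, is that the translates of $\Omega_n^0$ cover $\Omega_n\setminus\{p_\infty\}$ rather than all of $\Omega_n$, so the single image point $\tilde f_n(p_\infty)=(0,0,0)$ is strictly speaking not in any $G$-translate of $\tilde f_n(\Omega_n^0)$ and must be mentioned separately (e.g.\ as the limit of $\tilde f_n$ along the wall $\theta=0$ as $u\to\infty$).
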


\section{Properness of $\tilde f_n$}
Firstly, we prepare some inequalities which will be necessary for 
proving that $\tilde f_n$ is a proper mapping. 

By the definition \eqref{eq:omega} of $\Omega_n$, we have the 
following two inequalities on $\Omega_n$
(cf. \eqref{eq:meaningful}, \eqref{eq:tnu-cos}) 
\begin{equation}\label{eq:Phi-pos}
  T_n(u)>\cos n\theta, 
 \end{equation}
and
 \begin{equation}\label{eq:cos-pos}
  u>\cos\frac{\pi}{n} \text{ on }\Omega_n, 
 \end{equation}
since the function $\max_{j=0,\dots,n-1}
             \left[\cos\left(\theta-{2\pi j}/ n \right)\right]$ 
has a minimum value $\cos (\pi/n)$.

\begin{lemma}\label{lem:ut}
 On the fundamental domain $\Omega_n^0$,
 it holds that
 \begin{equation}\label{eq:positive}
   u - \cos\left(\theta-\frac{2\pi j}{n}\right)\geq 2\sin^2\frac{\pi}{n}
	    \qquad (j=2,\dots,n-1).
 \end{equation}
\end{lemma}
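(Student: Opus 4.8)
The plan is to eliminate the variable $u$ using the defining inequality of the fundamental domain, and then reduce the resulting two-variable trigonometric expression to a product of sines that can be estimated by a single interval bound.

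First I would exploit that on $\Omega_n^0$ one has $u>\cos\theta$, so that
\[
   u-\cos\left(\theta-\frac{2\pi j}{n}\right)
   >\cos\theta-\cos\left(\theta-\frac{2\pi j}{n}\right).
\]
Thus it suffices to prove the sharper, $u$-free inequality
\[
   \cos\theta-\cos\left(\theta-\frac{2\pi j}{n}\right)\geq 2\sin^2\frac{\pi}{n}
   \qquad\left(j=2,\dots,n-1,\ 0\leq\theta\leq\tfrac{\pi}{n}\right),
\]
since the claimed bound then follows (indeed with strict inequality). Next I would apply the sum-to-product identity $\cos X-\cos Y=-2\sin\frac{X+Y}{2}\sin\frac{X-Y}{2}$ with $X=\theta$ and $Y=\theta-2\pi j/n$, which rewrites the left-hand side as
\[
   \cos\theta-\cos\left(\theta-\frac{2\pi j}{n}\right)
     =2\sin\left(\frac{\pi j}{n}-\theta\right)\sin\frac{\pi j}{n}.
\]

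The key step is then to locate the two arguments. For $2\leq j\leq n-1$ and $0\leq\theta\leq\pi/n$, the argument $\pi j/n-\theta$ ranges over $[\pi/n,(n-1)\pi/n]$, its minimum value $\pi/n$ occurring at $j=2$, $\theta=\pi/n$ (this is precisely where the restriction $j\geq 2$ enters), while $\pi j/n$ ranges over $[2\pi/n,(n-1)\pi/n]$. Both arguments therefore lie in $[\pi/n,\pi-\pi/n]$. On that interval $\sin$ is bounded below by $\sin(\pi/n)>0$, because $\sin(\pi/n)=\sin(\pi-\pi/n)$ and $\sin$ is concave there, so its minimum over the closed interval is attained at the two endpoints. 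Consequently each sine factor is at least $\sin(\pi/n)$, their product is at least $\sin^2(\pi/n)$, and the desired bound $2\sin^2(\pi/n)$ follows.

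I expect the only genuine obstacle to be the range bookkeeping in the key step: checking that after the substitution both arguments remain inside $[\pi/n,\pi-\pi/n]$. This is what forces the index range $j=2,\dots,n-1$ and explains why the estimate fails for $j=1$, where $\pi j/n-\theta$ can reach $0$ and the corresponding sine factor vanishes.
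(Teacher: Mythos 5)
Your proof is correct, and up through the product formula it coincides with the paper's: both use $u>\cos\theta$ to eliminate $u$, then the identity $\cos\theta-\cos\bigl(\theta-\tfrac{2\pi j}{n}\bigr)=2\sin\bigl(\tfrac{\pi j}{n}-\theta\bigr)\sin\tfrac{\pi j}{n}$. The difference lies in the final estimate. The paper replaces $\sin\bigl(\tfrac{\pi j}{n}-\theta\bigr)$ by $\sin\tfrac{\pi(j-1)}{n}$ (using $\theta\le\pi/n$) and then bounds $2\sin\tfrac{\pi(j-1)}{n}\sin\tfrac{\pi j}{n}\ge 2\sin^2\tfrac{\pi}{n}$; you instead bound each sine factor directly by $\sin(\pi/n)$ after verifying that both arguments lie in $[\pi/n,\pi-\pi/n]$, where the concave sine attains its minimum at the endpoints. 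Your version is in fact the more careful one: the paper's intermediate step tacitly treats $\sin$ as increasing on the relevant range, but $\tfrac{\pi j}{n}-\theta$ can exceed $\pi/2$, and the inequality $\sin\bigl(\tfrac{\pi j}{n}-\theta\bigr)\ge\sin\tfrac{\pi(j-1)}{n}$ actually fails for $j>(n+1)/2$ and $\theta$ near $0$ (e.g.\ for $n=5$, $j=4$, $\theta=0$ one has $\sin\tfrac{4\pi}{5}=\sin\tfrac{\pi}{5}<\sin\tfrac{3\pi}{5}$), even though the final bound $2\sin^2(\pi/n)$ remains valid there --- precisely by your endpoint argument. Your range bookkeeping is also right: the constraint $j\ge 2$ is exactly what keeps $\tfrac{\pi j}{n}-\theta\ge\tfrac{\pi}{n}$, and for $j=1$ that factor can vanish. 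So your proposal not only follows the paper's strategy but quietly repairs its one shaky step.
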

\begin{proof}
 Since $u>\cos\theta$ and $0\leq \theta\leq \pi/n$ on $\Omega_n^0$,
 \begin{align*}
   u - \cos\left(\theta-\frac{2\pi j}{n}\right)
    &>\cos\theta-\cos\left(\theta-\frac{2\pi j}{n}\right)
    = 2 \sin\left(\frac{\pi j}{n}-\theta\right)\sin\frac{\pi j}{n}\\
    &\geq 
     2 \sin\frac{\pi(j-1)}{n}\sin\frac{\pi j}{n}\geq 2\sin^2\frac{\pi}{n}
 \end{align*}
 for $2\leq j\leq n-1$, proving \eqref{eq:positive}.
\end{proof}

Using these, we prove the following assertion:
\begin{proposition}\label{prop:proper}
 The analytic extension $\tilde f_n \colon \Omega_n \to \R^3_1$ is 
a proper mapping.
\end{proposition}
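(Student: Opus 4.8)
The plan is to show that $\tilde f_n$ is proper by verifying that preimages of compact sets are compact, equivalently that $\tilde f_n$ maps every sequence escaping to the boundary of $\Omega_n$ (or to $p_\infty$) to a sequence escaping to infinity in $\R^3_1$. By Proposition \ref{prop:G}, the whole image is the $G$-orbit of $\tilde f_n(\Omega_n^0)$, and $G$ is a finite group of isometries; since properness is preserved under finite unions of isometric copies, it suffices to prove that the restriction $\tilde f_n|_{\Omega_n^0}$ is proper into $\R^3_1$. The boundary of the fundamental domain $\Omega_n^0$ has essentially two kinds of escape routes: the arc $u=\cos\theta$ with $0\le\theta\le\pi/n$ (the only factor in \eqref{eq:tnu-cos} that is allowed to vanish on $\Omega_n^0$, namely the $j=0$ term $u-\cos\theta$), and the ``far away'' directions where $u\to\infty$ (including the point $p_\infty$, which corresponds to $u=\infty$).

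First I would treat the escape to the wall $u\to\cos\theta$. Here the key observation is that among the logarithmic terms in \eqref{eq:u1} and \eqref{eq:u2}, only the $j=0$ summand $\log(u-\cos\theta)$ blows up, while by Lemma \ref{lem:ut} all the terms with $2\le j\le n-1$ stay bounded below away from zero, so their logarithms remain bounded; the $j=1$ and (for \eqref{eq:u2}) the remaining terms need separate but elementary bounds. Since the coefficient $\cos(2\pi\cdot 0/n)=1$ of the $j=0$ term in \eqref{eq:u2} is nonzero, $\tilde x_2\to-\infty$ as $u\to\cos\theta$, forcing $|\tilde f_n|\to\infty$. I would also need to control the rational first terms: using the factorization \eqref{eq:tnu-cos}, the denominator $T_n(u)-\cos n\theta$ is a product whose only vanishing factor is $u-\cos\theta$, so the first terms of $\tilde x_0,\tilde x_1,\tilde x_2$ are of order $1/(u-\cos\theta)$, which likewise drives the point to infinity and does not cancel the logarithmic divergence of the opposite sign in a way that keeps the image bounded.

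Second I would treat the escape $u\to\infty$. Here I expect the first (rational) terms in \eqref{eq:u0}--\eqref{eq:u2} to tend to $0$, since the denominator $T_n(u)$ grows like $2^{n-1}u^n$ while the numerators grow only like $u^{n-1}$; the logarithmic terms, however, all behave like $\log u$, and in \eqref{eq:u2} the sum of their coefficients is $\sum_{j=0}^{n-1}\cos(2\pi j/n)=0$ by the same cancellation used in Corollary \ref{lem:x012}, so I must extract the next-order behavior to see that $\tilde x_2$ still tends to $+\infty$ (using $\log(u-\cos(\theta-2\pi j/n))=\log u+\log(1-\cos(\cdots)/u)$ and expanding). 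This shows $p_\infty$ and the $u\to\infty$ directions are also sent to infinity. Combining the two cases over the fundamental domain, together with the finiteness of $G$, gives properness of $\tilde f_n$ on all of $\Omega_n$.

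The main obstacle I anticipate is the delicate accounting near the wall $u=\cos\theta$, especially at the corner $\theta=\pi/n$ where the $j=1$ logarithmic term $\log(u-\cos(\theta-2\pi/n))$ also approaches its smallest value; I would have to confirm via the Lemma \ref{lem:ut} estimates (extended carefully to the boundary endpoint) that no two competing divergences conspire to cancel, so that some coordinate of $\tilde f_n$ genuinely blows up. Once the coordinatewise divergence inequalities are pinned down, the topological conclusion that preimages of compact sets are compact follows routinely, and the finite-group reduction finishes the proof.
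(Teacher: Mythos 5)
Your skeleton (reduce to the fundamental domain $\Omega_n^0$ by the finite symmetry group, split each coordinate into a rational part plus a logarithmic sum, and show coordinatewise blow-up at the boundary) is the paper's skeleton, but two of your three analytic claims are wrong, and the point you flag as ``the main obstacle'' is left unresolved. First, the $u\to\infty$ case: you treat $u\to\infty$ and $p_\infty$ as escape routes and claim $\tilde x_2\to+\infty$ there. Neither is true. By construction $p_\infty$ is an \emph{interior} point of $\Omega_n$ (its neighborhoods are the sets $\{u>M\}\cup\{p_\infty\}$, images under $\iota$ of disks about $z=0$), so every sequence with $u_k\to\infty$ converges \emph{inside} $\Omega_n$, and properness imposes no condition on it. Moreover $\tilde f_n(p_\infty)=f_n(0)=(0,0,0)$: the rational terms of \eqref{eq:u0}--\eqref{eq:u2} are $O(1/u)$, and in the expansion $\log\left(u-\cos(\theta-2\pi j/n)\right)=\log u+\log\left(1-\cos(\theta-2\pi j/n)/u\right)$ the $\log u$ contributions cancel exactly because $\sum_{j}\cos(2\pi j/n)=\sum_j\sin(2\pi j/n)=0$, while the corrections are $O(1/u)$ --- so all three coordinates tend to $0$, not to $+\infty$, and the next-order extraction you propose cannot succeed. (Indeed, if $p_\infty$ were a puncture rather than a point of the domain, properness would be \emph{false}; attaching $p_\infty$ is what makes the statement true, so your reading of its role is backwards.) Consequently the only escape route is the wall $C=\{(\cos\theta,\theta)\,;\,0\le\theta\le\pi/n\}$, which is all the paper analyzes.

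Second, your mechanism at the wall is also incorrect: the rational first term of $\tilde x_2$ is \emph{not} of order $1/(u-\cos\theta)$, because its numerator $-T_{n-1}(u)\cos\theta+u\cos(n-1)\theta$ vanishes at $u=\cos\theta$ by the Chebyshev identity $T_{n-1}(\cos\theta)=\cos(n-1)\theta$; together with the factorization \eqref{eq:tnu-cos} and the lower bounds of Lemma \ref{lem:ut}, this makes $\tilde x_{2,a}$ \emph{bounded} on $\{\theta\le\pi/n-\delta,\ u\le u_0\}$, which is precisely what the paper needs so that the $j=0$ term $\frac{n-1}{n^2}\log(u-\cos\theta)$ forces $\tilde x_2\to-\infty$ for every limit $\theta_\infty\in[0,\pi/n)$ (your claimed rational blow-up would in any case fail at $\theta_\infty=0$, where e.g.\ the numerator $\sin n\theta$ of $\tilde x_0$ tends to $0$). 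Finally, at the corner $\theta_\infty=\pi/n$ both the $j=0$ and $j=1$ factors of \eqref{eq:tnu-cos} vanish, so the boundedness of $\tilde x_{2,a}$ is lost and, for small $n$, the $j=0$ and $j=1$ logarithms in $\tilde x_2$ even diverge with opposite signs (for $n=3$, $\cos(2\pi/3)<0$) at uncontrolled relative rates --- exactly the cancellation you say you ``would have to confirm'' cannot happen, but give no argument for. The paper's resolution is to switch coordinates: $\tilde x_1$ contains no $j=0$ logarithm, its $j=1$ term tends to $-\infty$ with positive coefficient $\sin(2\pi/n)$, its $j\ge2$ terms are bounded by Lemma \ref{lem:ut}, and its rational part satisfies $\tilde x_{1,a}\le0$ on all of $\Omega_n^0$, since monotonicity of $T_{n-1}$ on $[\cos(\pi/n),\infty)$ gives $T_{n-1}(u)\sin\theta+u\sin(n-1)\theta\ge\sin n\theta\ge0$ while the denominator is positive by \eqref{eq:Phi-pos}. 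These sign and cancellation arguments are the actual content of the proof, and they are missing from, or contradicted by, your proposal.
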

\begin{proof}
By Proposition \ref{prop:symmetry},
it is sufficient to show that the restriction of 
$\tilde f_n$ to $\Omega_n^0$ is a proper mapping.
We set
 \begin{equation*}
    C:= \overline{\Omega_n^0}\setminus\Omega_n^0
      = \left\{(\cos\theta,\theta)\,;\,
	          0\leq \theta\leq \frac{\pi}{n}
               \right\}.
 \end{equation*}
Consider a 
sequence $\{(u_k,\theta_k)\}_{k=1,2,\dots}$
 in $\Omega_n^0$ 
such that
 \begin{equation*}
  \lim_{k\to\infty} (u_k,\theta_k)
   =(\cos\theta_{\infty},\theta_{\infty})\in C
   \qquad\left(0\leq\theta_{\infty}\leq\frac{\pi}{n}\right).
 \end{equation*}
It is sufficient to show that
the sequence
$\{\tilde f_n(u_k,\theta_k)\}$ is unbounded in $\R^3_1$.
 
\paragraph{Case 1:} 
 We consider the case that $0\leq \theta_{\infty} < \pi/n$.
The sequence $\{ (u_k, \theta_k) \}$ is bounded since it converges.
So we can take positive numbers $u_0$ and $\delta$ such that 
$u_k < u_0$ and $\theta_k < (\pi/n) - \delta$ for all $k$, that is, 

 \[
    (u_k,\theta_k) \in \Omega_{\delta,u_0}:=
       \left\{(u,\theta)\in \Omega_n^0\,;\,
 u\leq u_0, \ \theta\leq\frac{\pi}{n}-\delta\right\}.
 \]
 We now set (cf.\ \eqref{eq:u2})
 \begin{align*}
  \tilde x_2&= \tilde x_{2,a}+\tilde x_{2,b},\\
    &\tilde x_{2,a}:= \frac{-T_{n-1}(u)\cos \theta+u \cos (n-1)\theta}
               {n(T_n(u)-\cos n\theta)},\\
    &\tilde x_{2,b}:= 
      \frac{n-1}{n^2}
         \sum_{j=0}^{n-1}
             \log\left(
               u- \cos\left(\theta-\frac{2\pi  j}{n}\right)
          \right)
          \cos\frac{2\pi j}{n}.
 \end{align*}    
 Since the numerator of $\tilde x_{2,a}$ satisfies 
(cf. \eqref{eq:T} in the appendix)
 \begin{multline*}
         -T_{n-1}(u)\cos\theta+u \cos (n-1)\theta
\biggr |_{u=\cos\theta}
    = -\cos(n-1)\theta\cos\theta + \cos\theta \cos(n-1)\theta=0,
 \end{multline*}
 there exists a real analytic function $\varphi(u,\theta)$
 such that
 \begin{equation*}
   - T_{n-1}(u)\cos \theta + u \cos (n-1)\theta = (u-\cos\theta) \varphi(u,\theta).
 \end{equation*}
 Since
\begin{equation}\label{eq:positive1}
  \begin{aligned}
     u - \cos\left(\theta-\frac{2\pi}{n}\right)
     & \geq 
     \cos\theta - \cos\left(\theta-\frac{2\pi}{n}\right)\\
     &= 2\sin\left(\frac{\pi}{n}-\theta\right)\sin\frac{\pi}{n}
     > 2\sin\delta\sin\frac{\pi}{n}
 \end{aligned}
\end{equation}
 holds on $\Omega_{\delta,u_0}$,
\eqref{eq:tnu-cos} 
and  \eqref{eq:positive} in Lemma~\ref{lem:ut}
 yield that
 there exist a real analytic function $\psi(u,\theta)$ and a positive
 number $\varepsilon$ such that
 \begin{equation*}
    T_{n}(u)-\cos n\theta = (u-\cos\theta)\psi(u,\theta),\qquad
     \psi(u,\theta)\geq\varepsilon>0\quad\text{on}\quad\Omega_{\delta,u_0}.
 \end{equation*}
 Thus 
 $\tilde x_{2,a}={\varphi(u,\theta)}/{n\psi(u,\theta)}$
 is bounded on $\Omega_{\delta,u_0}$.

 Since
 \eqref{eq:positive} in Lemma~\ref{lem:ut} and \eqref{eq:positive1}
 imply that
 \[
     \log\left(u - \cos\left(\theta-\frac{2\pi j}{n}\right)\right)\qquad
     (j=1,2,\dots,n-1)
 \]
 is bounded on $\Omega_{\delta,u_0}$,
 we can write
 \begin{align*}
  \tilde x_{2,b}  
  = \frac{n-1}{n^2}\log(u-\cos\theta) + \beta(u,\theta),
 \end{align*}
 where $\beta(u,\theta)$ is a real analytic function 
bounded on $\Omega_{\delta,u_0}$.
Thus, 
 $\tilde x_2(u_k,\theta_k)\to-\infty$ as 
 $k\to \infty$. 

\bigskip
\paragraph{Case 2:}
 We next consider the case that
 $\theta_{\infty}=\pi/n$.
In other words, we suppose the sequence $\{ (u_k, \theta_k ) \}$ converges to 
$(\cos(\pi/n), \pi/n)$. 
In this case, 
we seek to prove
\begin{equation}\label{eq:limitx1}
\lim_{k\to\infty} \tilde x_1(u_k,\theta_k) = -\infty.
\end{equation}
 
We may assume $\{ (u_k, \theta_k ) \} \subset \Omega_n^0 \cap \{ u \le u_0 \}$ 
for some constant $u_0$. 
We set (cf. \eqref{eq:u1})
 \begin{align*}
  \tilde x_1 &= \tilde x_{1,a} + \tilde x_{1,b},\\
  &\begin{aligned}
      \tilde x_{1,a}:&=-\frac{T_{n-1}(u)\sin \theta+u \sin (n-1)\theta}{%
           n(T_n(u)-\cos n\theta)},\\
   \tilde x_{1,b}:&=
      \frac{n-1}{n^2}
       \sum_{j=1}^{n-1}
                        \log\left(
                 u- \cos\left(\theta-\frac{2\pi j}{n}
\right)\right)\sin\frac{2\pi j}{n}.
   \end{aligned}
 \end{align*}
Let $(u, \theta) \in \Omega_n^0$. Then $u > \cos \theta$ and 
$\cos \theta \in (\cos (\pi/n),1) \subset [\cos (\pi/n), \infty)$. 
So both $u$ and $\cos \theta$ 
belong to $[\cos (\pi/n), \infty)$. 
Since $T_{n-1}$ is monotone increasing on 
$[\cos (\pi/n), \infty) (\subset [\cos (\pi/(n-1)), \infty) )$
(cf. \eqref{eq:T} and Proposition \ref{prop:app:t} in the appendix), 
it holds that
 \[
    T_{n-1}(u) > T_{n-1}(\cos \theta) = \cos (n-1)\theta\qquad \text{on} \quad \Omega_{n}^0. 
 \]
 Noticing this,  
we have
 \[ 
    T_{n-1}(u)\sin\theta + u \sin(n-1)\theta
    \geq \cos (n-1)\theta \sin\theta + \cos\theta \sin (n-1)\theta
    =\sin n\theta\geq 0
 \]
 on $\Omega_n^0$.
By \eqref{eq:Phi-pos},
 the inequality
 $\tilde x_{1,a}\leq 0$ holds on $\Omega_{n}^0$.
 By \eqref{eq:positive} in Lemma~\ref{lem:ut},
 \[
     \log\left(u-\cos\left(\theta-\frac{2\pi j}{n}\right)\right)
     \qquad (j=2,\dots,n-1)
 \]
 is bounded on $\Omega_n^0 \cap \{ u \le u_0 \}$,
 and we can write
 \begin{align*}
   \tilde x_1 =
   \tilde x_{1,a} +\tilde x_{1,b} \leq \tilde x_{1,b}
   = \hat\beta(u,\theta) + 
                \log\left(u-\cos\left(\theta-
  \frac{2\pi}{n}\right)\right)\sin\frac{2\pi}{n}
 \end{align*}
 on $\Omega_n^0$, where $\hat \beta(u,\theta)$ 
 is a real analytic function bounded on $\Omega_n^0 \cap \{ u \le u_0 \}$.
 Since the right-hand side tends to $-\infty$
 as $(u,\theta)\to \bigl(\cos(\pi/n),\pi/n\bigr)$,
 \eqref{eq:limitx1} holds. 
\end{proof}

\section{%
Immersedness of $\tilde f_n$}
\label{sec:3}

\begin{proposition}\label{prop:immersion}
 The analytic extension $\tilde f_n \colon\Omega_n\to \R^3_1$
 is an immersion.
\end{proposition}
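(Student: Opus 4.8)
The plan is to verify that the differential $d\tilde f_n$ has rank $2$ at every point of $\Omega_n$, which is equivalent to showing that the cross product $N:=\partial_u\tilde f_n\times\partial_\theta\tilde f_n$ (computed by regarding $\R^3_1$ as $\R^3$, since linear independence does not depend on the metric) never vanishes. Because $S$ and $R$ in \eqref{eq:SR} are invertible and \eqref{eq:1} holds, $\tilde f_n$ is an immersion at $(u,\theta)$ exactly when it is at $(u,-\theta)$ and at $(u,\theta+2\pi/n)$, so by Proposition \ref{prop:G} one could restrict to $\Omega_n^0$; in fact the computation below produces a closed form for $N$ valid on all of $\Omega_n$, so no such restriction is needed. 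The strategy is to find this closed form by computing $N$ on the space-like part $\{u>1\}$, where I can use the holomorphic lift, and then invoke analytic continuation.

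On $\{u>1\}$ I would use $F=\int_0^z\alpha$ from \eqref{eq:alpha}. Writing $z=re^{\imag\theta}$ with $r=u-\sqrt{u^2-1}\in(0,1)$ and $\Phi:=z\,a(z)$, one finds $\partial_u\tilde f_n=\frac{2r}{r^2-1}\Re\Phi$ and $\partial_\theta\tilde f_n=-\Im\Phi$, whence, using $\Phi\times\bar\Phi=-2\imag(\Re\Phi\times\Im\Phi)$ and $\Phi\times\bar\Phi=r^2(a\times\bar a)$,
\begin{equation*}
 N=-\frac{\imag r^3}{r^2-1}\,\bigl(a(z)\times\overline{a(z)}\bigr).
\end{equation*}
A direct evaluation of $a\times\bar a$ — factoring out $1/(z^n-1)^2$ and collecting real and imaginary parts — yields a vector each of whose entries carries the common factor $(1-r^{2n-2})$, which cancels the apparent pole at $r=1$. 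Simplifying with $\tfrac12(r^m+r^{-m})=T_m(u)$ and $r^{2n}-2r^n\cos n\theta+1=2r^n(T_n(u)-\cos n\theta)$ (the identities underlying \eqref{eq:u0}--\eqref{eq:u2}), I expect to obtain
\begin{equation*}
 N=\frac{U_{n-2}(u)}{\bigl(T_n(u)-\cos n\theta\bigr)^2}
   \bigl(T_{n-1}(u),\ \cos(n-1)\theta,\ \sin(n-1)\theta\bigr),
\end{equation*}
where $U_{n-2}$ is the second-kind Chebyshev polynomial, $U_{n-2}(u)=\sum_{k=0}^{n-2}r^{2k-(n-2)}$. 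The right-hand side is real analytic on all of $\Omega_n$ since $T_n(u)-\cos n\theta>0$ there by \eqref{eq:Phi-pos}; as it agrees with $N$ on the connected open set $\{u>1\}$, it equals $N$ on $\Omega_n\setminus\{p_\infty\}$ by analytic continuation.

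It then remains to show $N\neq 0$. The vector factor never vanishes, since its squared Euclidean norm is $T_{n-1}(u)^2+1\geq 1$, and the denominator is positive, so $N=0$ would force $U_{n-2}(u)=0$. However, the largest root of $U_{n-2}$ is $\cos\!\bigl(\pi/(n-1)\bigr)$, which is strictly smaller than $\cos(\pi/n)$, while $u>\cos(\pi/n)$ on $\Omega_n$ by \eqref{eq:cos-pos}; hence $U_{n-2}(u)>0$ and $N\neq0$ on $\Omega_n\setminus\{p_\infty\}$. Finally I would handle $p_\infty$ in the analytic chart near $z=0$ transported by $\iota$, where $\tilde f_n\circ\iota=f_n=\Re F$: from \eqref{eq:alpha} one has $a(0)=(0,\imag,-1)$, so $\partial_x f_n=\Re a$ and $\partial_y f_n=-\Im a$ equal $(0,0,-1)$ and $(0,1,0)$ at $z=0$ and are linearly independent, making $\tilde f_n$ an immersion at $p_\infty$ as well. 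The main obstacle is the explicit evaluation and simplification of $a\times\bar a$ into the clean Chebyshev form above; once that closed form is secured, the vanishing analysis and the root-location for $U_{n-2}$ are routine.
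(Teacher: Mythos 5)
Your proposal is correct and takes essentially the same route as the paper's proof: the same key identity $z\,a(z)=r f_r-\imag f_\theta$, the same conversion of the Jacobian minors into Chebyshev form, the same extension from the space-like part to all of $\Omega_n$ by real analyticity, and the same nonvanishing argument via $U_{n-2}(u)>0$ for $u>\cos(\pi/n)$ (cf.\ \eqref{eq:cos-pos} and Corollary \ref{cor:app:u}); indeed the second and third components of your closed form for $N$ are exactly the paper's $-\det(\tilde f^{02}_u,\tilde f^{02}_\theta)$ and $\det(\tilde f^{01}_u,\tilde f^{01}_\theta)$ in \eqref{eq:two}. The only (harmless) embellishments are that you compute the full cross product --- the extra component $U_{n-2}(u)T_{n-1}(u)/\bigl(T_n(u)-\cos n\theta\bigr)^2$ checks out but is unnecessary, since the paper instead notes that $\sin(n-1)\theta$ and $\cos(n-1)\theta$ cannot vanish simultaneously --- and that you verify immersedness at $p_\infty$ in the $z$-chart explicitly (where $|g_n(0)|=0\ne 1$, so $f_n$ is automatically immersed), a point the paper leaves implicit.
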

\begin{proof}
In this proof, $f, \tilde f$ denote $f_n, \tilde f_n$, respectively, 
for notational simplicity.

Since $\partial/\partial z = (1/2z)(r \partial/\partial r -
\imag \partial / \partial \theta)$ for $z=r e^{\imag \theta}$, 
we have
 \[
\alpha = dF = F_z dz = (F+\bar F)_z dz = 2f_z dz 
= \frac1{z}(r f_r-\imag f_\theta)dz, 
 \]
 where $\alpha=a(z)\,dz$ is as given in \eqref{eq:alpha},  
 that is,
 \begin{equation}\label{eq:key}
  z a(z) = r f_r-\imag f_\theta.
 \end{equation}
 On the other hand, we have (cf. \eqref{eq:alpha})
 \begin{align}\label{eq:za}
  z a(z)
  &= 
  \left(\frac{-2 \imag r^n e^{\imag n\theta}}
  {\left(r^n e^{\imag n\theta}-1\right)^2},
  \frac{\imag r e^{\imag \theta} 
  \left(1+r^{2n-2} e^{\imag(2n-2) \theta}\right)}
  {\left(r^n e^{\imag n\theta}-1\right)^2},\right.\\
  &\hspace{0.4\linewidth}\nonumber
  \left.-\frac{r e^{\imag \theta} \left(1-
  r^{2n-2} e^{\imag(2n-2) \theta}\right)}
  {\left(r^n e^{\imag n\theta}-1\right)^2}\right).
 \end{align}
 We define by $\xi^{jk}:=\trans{(\xi^j,\xi^k)}$
 ($j<k$)
 for $\xi=(\xi^0,\xi^1,\xi^2)\in \C^3$, and
 here $\trans{(*)}$ means transposition.
 Then 
 \[
 2 \imag r \det(f_r^{jk},f_\theta^{jk})
       =\det(r f_r^{jk}-\imag f_\theta^{jk},
           r f_r^{jk}+\imag f_\theta^{jk})
       =
         \det(z a^{jk},\overline{z a^{jk}}).
 \]
 Using this and \eqref{eq:za},
 one can arrive at
 \begin{align}\label{eq:12}
  \det(f_r^{01},f_\theta^{01})&=
  \frac{2r^{n-2} \left(r^{2 n}-r^2\right) \sin (n-1)\theta}
  {\left(r^{2 n}-2 r^n \cos n \theta+1\right)^2},\\
  \label{eq:13}
  \det(f_r^{02},f_\theta^{02})&=
  -\frac{2r^{n-2} \left(r^{2 n}-r^2\right) \cos (n-1)\theta}
  {\left(r^{2 n}-2 r^n \cos n \theta+1\right)^2}.
 \end{align}
 Since we set $u=(r+r^{-1})/2$, we have
 \begin{equation}\label{eq:fu}
  f_u=\frac{2r^2}{r^2-1}f_r.
 \end{equation}
 The equality \eqref{eq:12} is equivalent to
 \begin{align*}
  \det(f^{01}_u,f^{01}_\theta)
  &=
  \frac{4r^{n} \left(r^{2 n}-r^2\right) \sin (n-1)\theta}
  {(r^2-1)\left(r^{2 n}-2 r^n \cos n \theta+1\right)^2}\\
  &=
  \frac{4\left(r^{n-1}-r^{1-n}\right) \sin (n-1)\theta}
  {(r-r^{-1})\left(r^{n}+r^{-n}-2 \cos n \theta\right)^2}
  =
  \frac{U_{n-2}(u) \sin (n-1)\theta}
  {\left(T_n(u)-\cos n \theta\right)^2}, 
 \end{align*}
where $U_{n-2}(u)$ denotes the second Chebyshev polynomial of degree $n-2$. 
(See \eqref{eq:T2}, \eqref{eq:U} and \eqref{eq:U2} in the appendix.)
 Similarly, by \eqref{eq:13}, we have
 \[
    \det(f^{02}_u,f^{02}_\theta)
            =-\frac{U_{n-2}(u) \cos (n-1)\theta}
             {\left(T_n(u)-\cos n \theta\right)^2}.
 \]
 By the real analyticity, the identities
 \begin{equation}\label{eq:two}
  \det(\tilde f^{01}_u,\tilde f^{01}_\theta)
   =
   \frac{U_{n-2}(u) \sin (n-1)\theta}
   {\left(T_n(u)-\cos n \theta\right)^2},\ 
   \det(\tilde f^{02}_u,\tilde f^{02}_\theta)
   =-\frac{U_{n-2}(u) \cos (n-1)\theta}
   {\left(T_n(u)-\cos n \theta\right)^2}
 \end{equation}
 hold on $\Omega_n$.
Hence, it cannot occur that $\det(\tilde f^{01}_u,\tilde f^{01}_\theta)$ and 
$\det(\tilde f^{02}_u,\tilde f^{02}_\theta)$ vanish simultaneously,   
 since $U_{n-2}(u) >0$ by \eqref{eq:cos-pos}  
 (cf. Corollary \ref{cor:app:u} in the appendix).
 We conclude that
 $\tilde f_n$ is an immersion.
\end{proof}

\section{Embeddedness of $\tilde f_n$}
\label{sec:fin}
\subsection{Outline}
We show  that $\tilde f_n=(\tilde x_0, \tilde x_1, \tilde x_2)
\colon \Omega_n \to (\R^3_1 ; t,x,y)$ is an embedding.
The set
$$
\tilde x_0^{-1}(h)
$$
is called the {\it contour-line} 
of height $t=h$, and
$$
\Lambda_h :
=\tilde f_n(\tilde x_0^{-1}(h))
=\tilde f_n (\Omega_n) \cap \{ t=h \}
$$ 
is called the {\it level curve set} of 
height $t=h$.
To show the embeddedness of $\tilde f_n$,
it is sufficient to 
show that $\tilde f_n \colon 
\tilde x_0^{-1}(h) \to \Lambda_h$ 
is injective at each height $h$.  

Since (cf. \eqref{eq:x0} or \eqref{eq:u0}) 
\begin{equation}
 \label{eq:u02}
 \tilde x_0 = \frac{2 r^n \sin n \theta}{n(r^{2n} -2 r^n \cos n \theta +1)}
= \dfrac{\sin n \theta}{n(T_n(u)-\cos n \theta)}, 
\end{equation}
the contour-line of height $h=0$ is given by
\begin{equation}\label{eq:x0-10}
\tilde x_0^{-1}(0) = 
 \bigcup_{k=0}^{2n-1}\left\{ (u, \theta) \in \Omega_n \, ; \,  
\theta= \frac{k}{n}\pi \right\} 
\cup \{  p_\infty \}. 
\end{equation}
The figure of the contour-line
$\tilde x_0^{-1}(0)$ 
and its image (i.e. the level curve set of height $h=0$)
are indicated in Figure \ref{fg:cl-lcs-n5}.

\begin{figure}[htbp] 
\begin{center}
\begin{tabular}{ccc}
 \includegraphics[width=.45\linewidth]{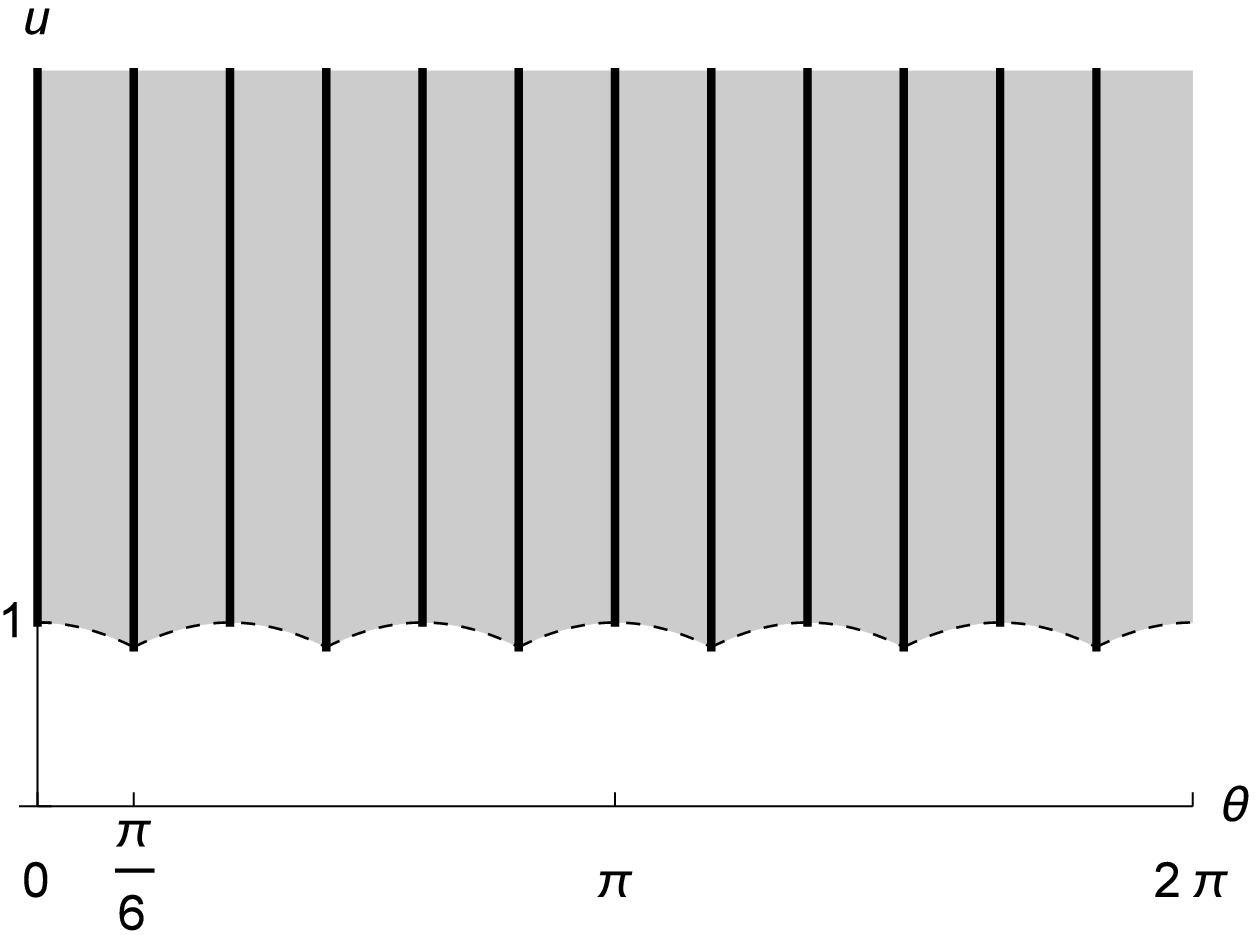} & 
\raisebox{35pt}[0pt]{$\xrightarrow{\quad \tilde f_6 \quad}$} & 
 \includegraphics[width=.34\linewidth]{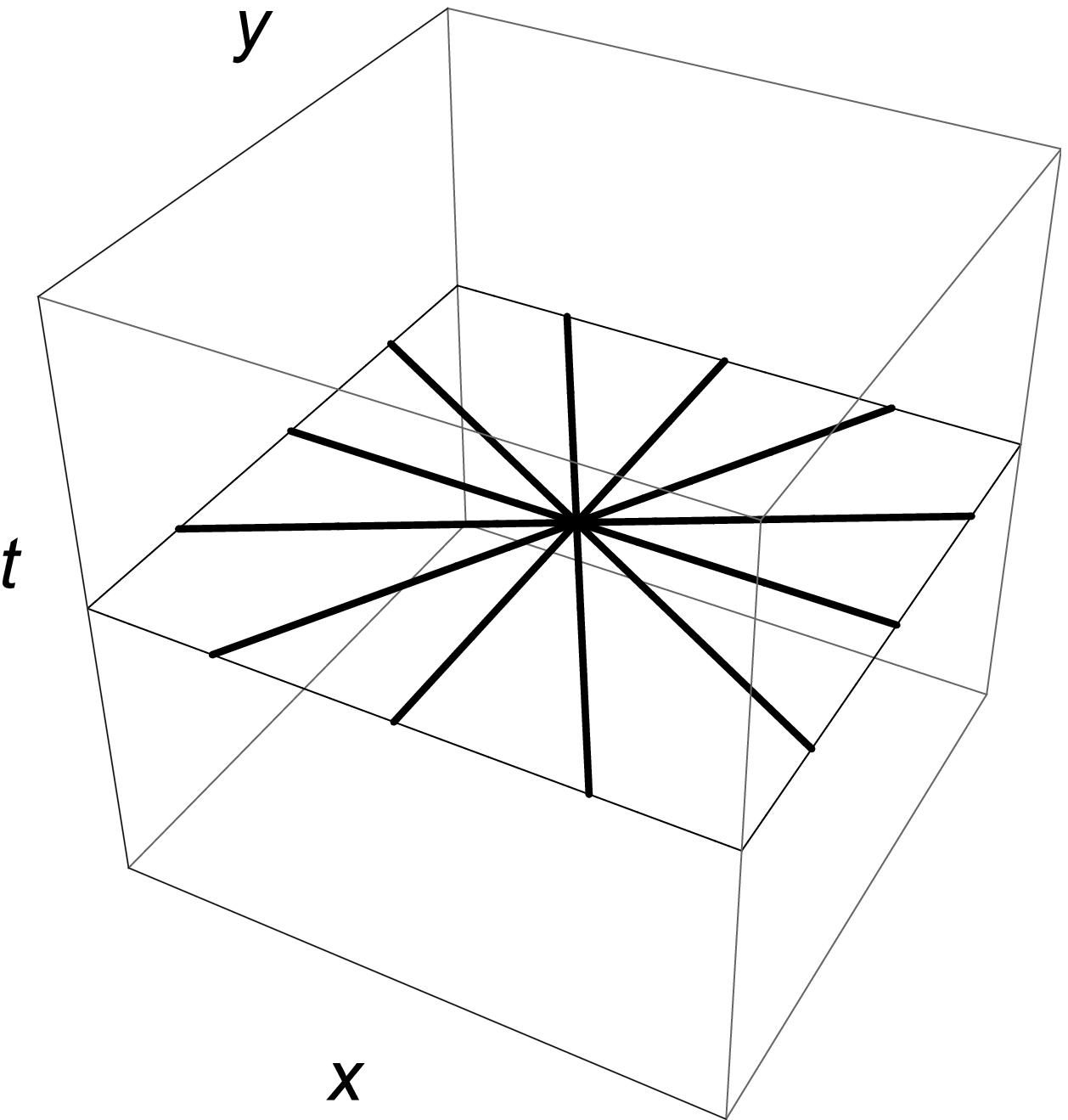} 
\end{tabular}
\caption{Contour-lines $\tilde x_0^{-1}(0)$ and
the level curve set $\Lambda_0$ at height $h=0$ in the case $n=6$.}
\label{fg:cl-lcs-n5}
\end{center}
\end{figure} 

On the other hand, if $h\ne 0$, we have
\begin{equation*}
\tilde x_0^{-1}(h) = 
 \left\{ (u, \theta) \in \Omega_n \, ; \,  
T_n(u) = \cos n \theta + \frac{1}{n h} \sin n \theta \right\}.  
\end{equation*}
The following assertion is immediately obtained.
\begin{proposition}\label{prop:nonneg}
 The function $\tilde x_0$ 
$($cf.  \eqref{eq:u02}$)$
is non-negative valued
 on $\Omega_n^0$, where
$\Omega_n^0$ is the fundamental domain given by
\eqref{eq:Omega0}.
\end{proposition}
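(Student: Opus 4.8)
The plan is to read off the sign of $\tilde x_0$ directly from its closed form \eqref{eq:u02}, splitting the analysis into the numerator and the denominator, since both factors have a controlled sign on the fundamental domain. This is essentially a one-line verification, which is why the proposition is flagged as an immediate consequence of the preceding formulas.

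First I would treat the denominator. By \eqref{eq:u02} we have
\[
 \tilde x_0 = \frac{\sin n\theta}{n\bigl(T_n(u)-\cos n\theta\bigr)},
\]
and inequality \eqref{eq:Phi-pos}, already established on all of $\Omega_n$, gives $T_n(u)-\cos n\theta>0$. Since $\Omega_n^0\subset\Omega_n$, the denominator $n(T_n(u)-\cos n\theta)$ is strictly positive throughout $\Omega_n^0$.

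Next I would handle the numerator. The defining constraint of $\Omega_n^0$ in \eqref{eq:Omega0} includes $0\le\theta\le\pi/n$, which forces $0\le n\theta\le\pi$. On this interval $\sin n\theta\ge 0$. Combining this with the positivity of the denominator yields $\tilde x_0\ge 0$ on $\Omega_n^0$, as claimed.

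There is no real obstacle here: the argument is purely a sign count, and the only work has already been done in deriving \eqref{eq:u02} and \eqref{eq:Phi-pos}. The one point that might warrant a remark is the attached point $p_\infty$, which corresponds to $z=0$; there one checks directly that $\tilde x_0=\Re X_0=0$ (from \eqref{eq:X0}, since $X_0(0)=-2\imag/n$ is purely imaginary), so the non-negativity extends to $p_\infty$ as well and the conclusion is consistent on the closure.
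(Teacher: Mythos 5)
Your proof is correct and matches the paper's intent exactly: the paper states the proposition as ``immediately obtained'' from \eqref{eq:u02}, with the implicit argument being precisely your sign count ($\sin n\theta\ge 0$ for $0\le\theta\le\pi/n$, denominator positive by \eqref{eq:Phi-pos}). Your extra check at $p_\infty$ (where $\Re X_0=0$ since $X_0(0)=-2\imag/n$ is purely imaginary) is a harmless and correct addition, though not needed since $\Omega_n^0$ as defined in \eqref{eq:Omega0} does not contain $p_\infty$.
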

Since
$\tilde x_0^{-1}(h) \cap \Omega_n^0 = \emptyset$ if $h<0$, 
we may suppose $h > 0$ to prove the injectivity of
$\tilde f_n:\tilde x_0^{-1}(h)\to \Lambda_h$
for $h\ne 0$.
Let $\Lambda_h^0$ be the level curve set of the image 
$\tilde f_n(\Omega_n^0)$ of the fundamental domain $\Omega_0$, 
that is, 
\begin{equation*}
 \Lambda_h^0 := \tilde f_n(\Omega_n^0) \cap \{ t=h \} 
= \tilde f_n \left( \tilde x_0^{-1}(h) \cap \Omega_n^0 \right).
\end{equation*}
As a consequence of Proposition \ref{prop:G}, 
we obtain the following:
\begin{corollary}\label{cor:lcs-f}
\begin{equation*}
 \Lambda_h = \bigcup_{k=0}^{n-1} R^k \Lambda_h^0,\quad
\mbox{and}\quad
 \Lambda_{-h} = S \Lambda_h, 
\end{equation*}
where 
$R$ and $S$ are the matrices defined in \eqref{eq:SR} (cf. \eqref{eq:1}).
\end{corollary}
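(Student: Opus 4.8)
The plan is to read off both identities from Proposition \ref{prop:G}, which says that the whole image is generated by the fundamental piece, $\tilde f_n(\Omega_n)=\bigcup_{g\in G}g\,\tilde f_n(\Omega_n^0)$, by keeping track of how the generators of $G$ act on the height $t=\tilde x_0$. By \eqref{eq:SR} the rotation $R$ fixes the $x_0$-axis and hence preserves each horizontal slice $\{t=h\}$, while $S$ reverses the sign of the first coordinate $x_0=t$ and therefore interchanges $\{t=h\}$ and $\{t=-h\}$. A direct computation with the matrices in \eqref{eq:SR} gives the relation $SRS=R^{-1}$, so $G$ consists of the $2n$ elements $R^k$ and $SR^k$, $k=0,\dots,n-1$; the rotations $R^k$ preserve heights and the orientation-reversing elements $SR^k$ flip them.

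For the second identity I would argue directly, without decomposing the image. The defining inequality of $\Omega_n$ in \eqref{eq:omega} is invariant under $\theta\mapsto-\theta$, and $\tilde f_n(u,-\theta)=S\tilde f_n(u,\theta)$ by \eqref{eq:1}; hence $\tilde f_n(\Omega_n)$ is $S$-invariant, i.e.\ $S\,\tilde f_n(\Omega_n)=\tilde f_n(\Omega_n)$. Since $S$ is a bijection carrying $\{t=h\}$ onto $\{t=-h\}$, intersecting this equality with $\{t=-h\}$ yields $\Lambda_{-h}=\tilde f_n(\Omega_n)\cap\{t=-h\}=S\tilde f_n(\Omega_n)\cap S\{t=h\}=S\bigl(\tilde f_n(\Omega_n)\cap\{t=h\}\bigr)=S\Lambda_h$.

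For the first identity I would intersect the decomposition with a slice. Grouping $G$ into rotations and flips, $\tilde f_n(\Omega_n)=\bigl(\bigcup_k R^k\tilde f_n(\Omega_n^0)\bigr)\cup\bigl(\bigcup_k SR^k\tilde f_n(\Omega_n^0)\bigr)$. Since $R^k$ preserves $\{t=h\}$, the first family meets the slice in $\bigcup_k R^k\Lambda_h^0$; since $SR^k$ sends $\{t=h\}$ to $\{t=-h\}$, the second family meets it in $\bigcup_k SR^k\Lambda_{-h}^0$. Here Proposition \ref{prop:nonneg} is decisive: as $\tilde x_0\ge 0$ on $\Omega_n^0$, one has $\Lambda_{-h}^0=\tilde f_n\bigl(\tilde x_0^{-1}(-h)\cap\Omega_n^0\bigr)=\emptyset$ for every $h>0$, so the entire second family disappears and $\Lambda_h=\bigcup_k R^k\Lambda_h^0$ for $h>0$. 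Combined with the second identity $\Lambda_h=S\Lambda_{-h}$, this determines $\Lambda_h$ for all $h\neq 0$.

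The step I expect to require the most care is the borderline height $h=0$, where the flip family no longer drops out because $\Lambda_0^0\neq\emptyset$ (it is the image of the two boundary rays $\theta=0$ and $\theta=\pi/n$, cf.\ \eqref{eq:x0-10}). I would dispose of it using the boundary relations forced by \eqref{eq:1}: the ray $\theta=0$ is pointwise fixed by $S$, so its image lies in $\Lambda_0^0$, whereas $S$ sends the ray $\theta=\pi/n$ to the ray $\theta=-\pi/n$, which by $\tilde f_n(u,\theta+2\pi/n)=R\tilde f_n(u,\theta)$ equals $R^{-1}\tilde f_n(u,\pi/n)$. Hence $S\Lambda_0^0\subseteq\Lambda_0^0\cup R^{-1}\Lambda_0^0$, and using $SR^k=R^{-k}S$ we get $\bigcup_k SR^k\Lambda_0^0=\bigcup_k R^{-k}S\Lambda_0^0\subseteq\bigcup_k R^k\Lambda_0^0$, so the flip family is absorbed and $\Lambda_0=\bigcup_k R^k\Lambda_0^0$ as well. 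This completes both identities.
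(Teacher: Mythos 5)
Your proof is correct and takes essentially the same route the paper intends: the paper states this corollary without proof, as an immediate consequence of Proposition \ref{prop:G}, and your write-up simply makes the implicit details explicit (the dihedral structure $SR^kS^{-1}=R^{-k}$ splitting $G$ into height-preserving rotations $R^k$ and height-flipping elements $SR^k$, the emptiness of $\Lambda_{-h}^0$ for $h>0$ via Proposition \ref{prop:nonneg}, and the absorption of the flip family at $h=0$ using the boundary relations from \eqref{eq:1}). The one caveat, inherited from the paper's own formulation of Proposition \ref{prop:G} rather than introduced by you, is the isolated point $\tilde f_n(p_\infty)=(0,0,0)\in\Lambda_0$, which lies in $\bigcup_k R^k\Lambda_0^0$ only if one includes $p_\infty$ with the fundamental domain---a triviality since the origin is fixed by $G$.
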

Corollary \ref{cor:lcs-f} implies that we should seek to prove that 
\begin{enumerate}
 \item the map $\tilde f_n$ restricted to $\tilde x_0^{-1}(h) \cap \Omega_n^0$, i.e.,  
$\tilde f_n \colon \tilde x_0^{-1}(h) \cap \Omega_n^0 \to \Lambda_h^0$ is 
injective,
 \item $\bigcup_{k=0}^{n-1} R^k \Lambda_h^0$ is a disjoint union.  
\end{enumerate}

\begin{figure}[htbp] 
\begin{center}
\begin{tabular}{ccc}
 \includegraphics[width=.30\linewidth]{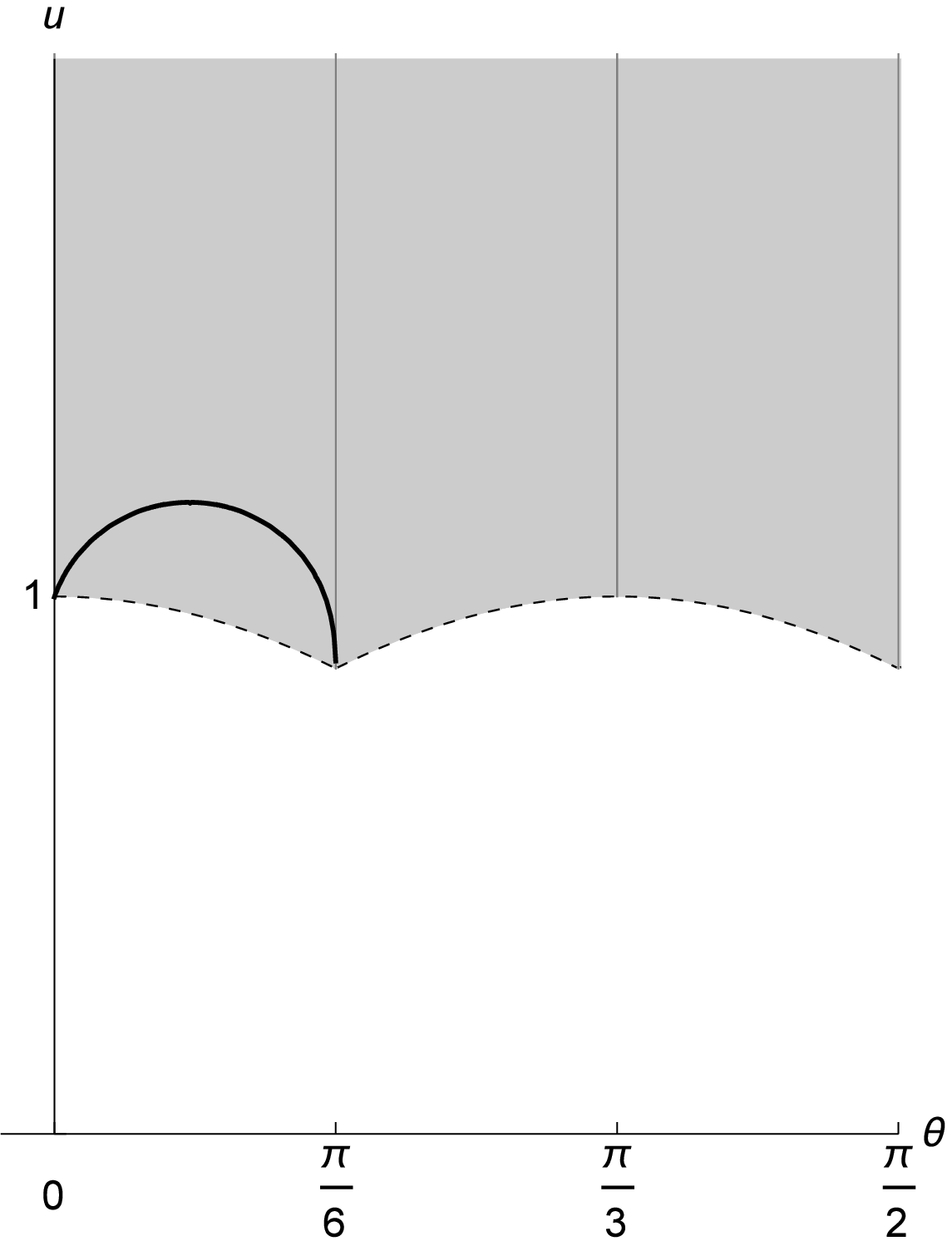} & 
\raisebox{35pt}[0pt]{$\xrightarrow{\quad \tilde f_6 \quad }$} & 
 \includegraphics[width=.34\linewidth]{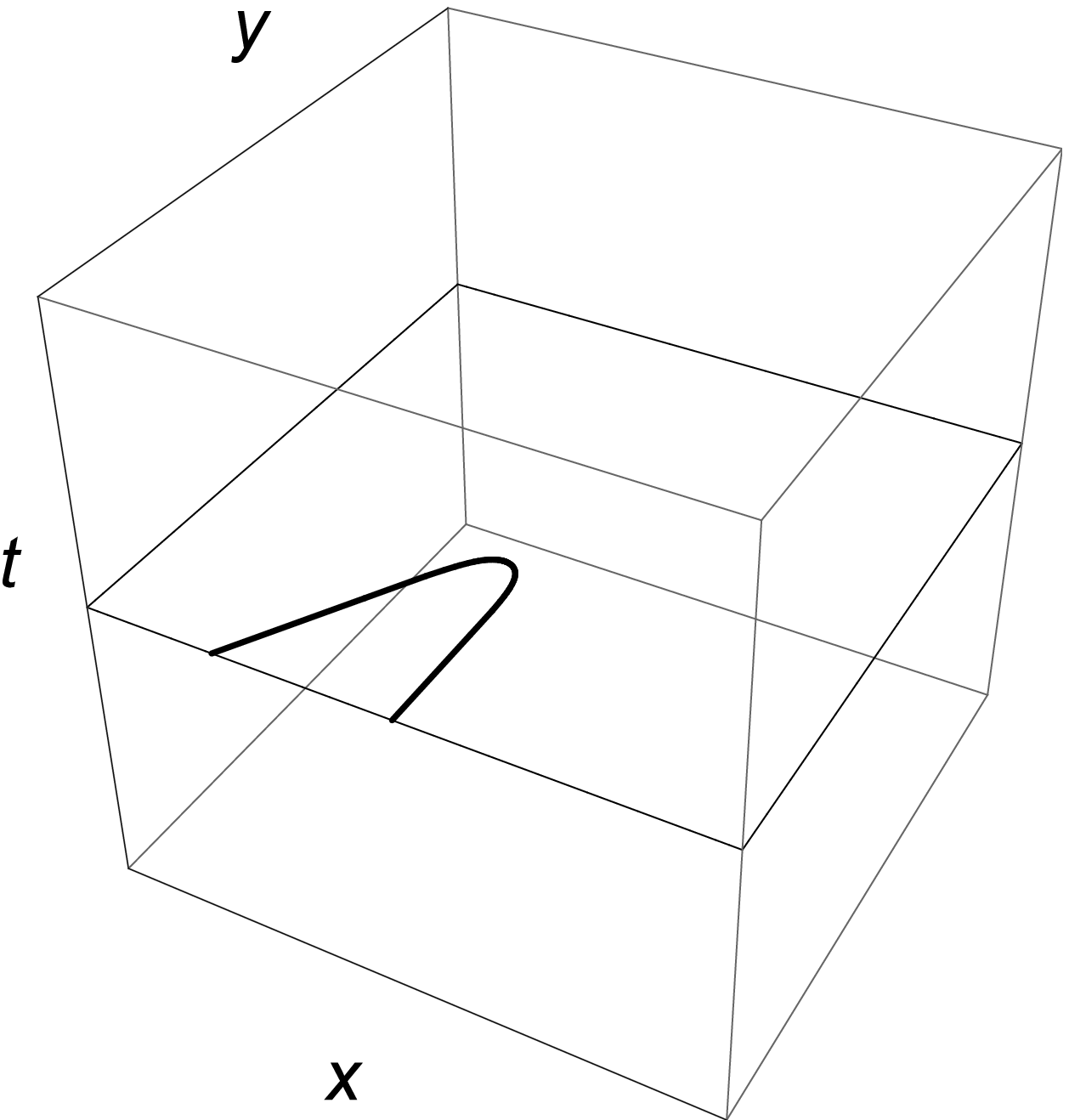} 
\end{tabular}
\caption{Contour-line $\tilde x_0^{-1}(h) \cap \Omega_n^0$ and
the level curve set $\Lambda_h^0$ for $h=0.01$ in the case $n=6$.}
\label{fg:cl-lcs-6-0}
\end{center}
\end{figure} 

\bigskip

\begin{figure}[thbp] 
\centering
 \includegraphics[width=.33\linewidth]{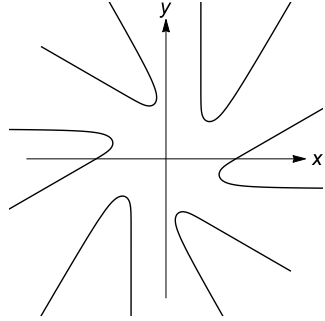} 
\caption{The level curve set $\Lambda_h$ of $\tilde f_{6}(\Omega_6)$ 
for $h=1$}
\label{fg:hyp-cyl}
\end{figure}

\subsection{Contour-lines in $\Omega_n^0$}
Now we investigate $\tilde x_0^{-1}(h) \cap \Omega_n^0$. As mentioned 
above, 
we suppose $h>0$. 

\begin{proposition}\label{x0:h}
\begin{enumerate}[{\rm(1)}]
 \item \label{item:implicit-ft}%
Given $h >0$ and $0 < \theta < \pi/n$, the equation $\tilde x_0 (u, \theta)=h$ 
is uniquely solved for $u \in (\cos (\pi/n), \infty)$. Indeed, it 
determines the implicit function $u=u(h, \theta)$ 
defined on $A:=\{ (h, \theta) \mid h>0, \ 0< \theta < \pi/n\}$ satisfying 
$\cos \theta < u(h, \theta)$. Moreover, the following hold: 
\begin{enumerate}
 \item For a fixed $\theta_0 \in (0, \pi/n)$, the function 
$h \mapsto u(h, \theta_0)$ is monotone decreasing, and 
\begin{equation*}
 \lim_{h \searrow 0} u(h, \theta_0) = \infty.
\end{equation*}
 
 \item For a fixed $h_0 >0$, 
\begin{equation*}
 \lim_{\theta \searrow 0} u(h_0, \theta) = 1, \quad 
 \lim_{\theta \nearrow {\pi}/n } u(h_0, \theta) = \cos (\pi/n). 
\end{equation*}
\end{enumerate}
\item \label{item:x0u}
The derivative $(\tilde x_0)_u$ is given by 
\begin{equation*} 
(\tilde x_0)_u =  -\frac{U_{n-1}(u) \sin n \theta}
  {\left(T_n(u)-\cos n \theta\right)^2}. 
\end{equation*}
\end{enumerate}
\end{proposition}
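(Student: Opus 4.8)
The plan is to dispose of part \eqref{item:x0u} first, since it is a direct computation, and then use it together with the monotonicity of the Chebyshev polynomial $T_n$ to settle part \eqref{item:implicit-ft}. For part \eqref{item:x0u}, I would simply differentiate the closed formula \eqref{eq:u0}, in which only the denominator depends on $u$, to get
\[
  (\tilde x_0)_u = -\frac{T_n'(u)\sin n\theta}{n\bigl(T_n(u)-\cos n\theta\bigr)^2},
\]
and then substitute the appendix identity $T_n'(u)=n\,U_{n-1}(u)$ to obtain exactly the claimed expression. This step is routine.

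For part \eqref{item:implicit-ft}, the key observation is that, for $h>0$, the equation $\tilde x_0(u,\theta)=h$ is equivalent to
\[
  T_n(u) = \cos n\theta + \frac{\sin n\theta}{nh} =: c(h,\theta),
\]
so everything reduces to understanding $T_n$ as a function of $u$. I would first record that $T_n$ restricts to a strictly increasing homeomorphism from $[\cos(\pi/n),\infty)$ onto $[-1,\infty)$: indeed $T_n(\cos(\pi/n))=\cos\pi=-1$, $T_n(u)\to\infty$ as $u\to\infty$, and $T_n'=n\,U_{n-1}$ is positive on $(\cos(\pi/n),\infty)$ because $\cos(\pi/n)$ is the largest zero of $U_{n-1}$ (cf.\ Corollary~\ref{cor:app:u} and Proposition~\ref{prop:app:t}). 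Granting this, existence and uniqueness are immediate: for $0<\theta<\pi/n$ one has $\sin n\theta>0$ and $\cos n\theta\in(-1,1)$, so $c(h,\theta)>\cos n\theta>-1$ lies in the range $(-1,\infty)$ and has a unique preimage $u(h,\theta)=T_n^{-1}(c(h,\theta))\in(\cos(\pi/n),\infty)$. The inequality $u(h,\theta)>\cos\theta$ then follows because $c(h,\theta)>\cos n\theta=T_n(\cos\theta)$, $\cos\theta>\cos(\pi/n)$, and $T_n$ is increasing on this half-line; analyticity of $u(h,\theta)$ comes from the implicit function theorem, using $(\tilde x_0)_u\ne 0$ from part \eqref{item:x0u}.

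The monotonicity and limit assertions then reduce to the explicit description $u(h,\theta)=T_n^{-1}(c(h,\theta))$ together with continuity and monotonicity of $T_n^{-1}$. For fixed $\theta_0\in(0,\pi/n)$, the quantity $c(h,\theta_0)$ is strictly decreasing in $h$ (since $\sin n\theta_0>0$) and tends to $+\infty$ as $h\searrow 0$, so $u(h,\theta_0)$ is decreasing and tends to $\infty$, giving (a). For fixed $h_0>0$, letting $\theta\searrow 0$ sends $c\to 1$, hence $u\to T_n^{-1}(1)=1$, while letting $\theta\nearrow\pi/n$ sends $c\to-1$, hence $u\to T_n^{-1}(-1)=\cos(\pi/n)$, giving (b).

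I expect the only genuinely delicate point to be the claim that $T_n$ is monotone on the half-line $[\cos(\pi/n),\infty)$: one must invoke that $\cos(\pi/n)$ is precisely the largest root of $U_{n-1}=T_n'/n$, so that no critical point of $T_n$ lies to its right, and keep in mind that the oscillatory behavior of $T_n$ on $[-1,\cos(\pi/n)]$ is irrelevant here. Once the interval of monotonicity is pinned down, the rest is a formal consequence of the inverse-function description.
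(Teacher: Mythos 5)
Your proposal is correct and follows essentially the same route as the paper: both reduce $\tilde x_0(u,\theta)=h$ to $T_n(u)=\cos n\theta+\frac{1}{nh}\sin n\theta$, invoke the monotonicity of $T_n$ on $[\cos(\pi/n),\infty)$ to define $u(h,\theta)=T_n^{-1}\bigl(\cos n\theta+\frac{1}{nh}\sin n\theta\bigr)$, deduce $u>\cos\theta$ and the limits (a), (b) from the monotone increasing inverse $T_n^{-1}\colon[-1,\infty)\to[\cos(\pi/n),\infty)$, and obtain part (2) by direct differentiation of \eqref{eq:u0} with $T_n'=nU_{n-1}$. Your added justifications (that $\cos(\pi/n)$ is the largest zero of $U_{n-1}$, and the implicit-function-theorem remark for analyticity) merely make explicit what the paper delegates to its appendix propositions.
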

\begin{proof}
(\ref{item:implicit-ft}): The equation $\tilde x_0 (u, \theta)=h (>0)$ 
is equivalent to 
$$T_n (u) = \cos n \theta + \frac{1}{nh} \sin n \theta,$$ 
and 
$$-1 < \cos n \theta < \cos n \theta + \frac{1}{nh} \sin n \theta$$ 
holds 
on $A=\{ (h, \theta) \mid h>0, \ 0< \theta < \pi/n \}$.   
On the other hand,  
the Chebyshev polynomial $T_n(u)$ is monotone increasing on
the interval $[\cos (\pi/n) , \infty)$ 
(see Proposition \ref{prop:app:t} in the appendix), 
and hence it has the inverse function 
$$
T_n^{-1} \colon [-1, \infty) \to [\cos (\pi/n), \infty),
$$
which is monotone increasing. Thus, 
\begin{equation}\label{eq:explicit-u}
 u(h, \theta):=T_n^{-1} ( \cos n \theta + \frac{1}{nh} \sin n \theta )
\quad \text{on $A$} 
\end{equation}
is well-defined and the desired one.  
Obviously, $$\cos \theta = T_n^{-1}(\cos n \theta) 
< T_n^{-1} ( \cos n \theta + \frac{1}{nh} \sin n \theta ) = u(h, \theta)$$ 
holds on $A$. 
 
Since $T_n^{-1}$ is monotone increasing on $[-1, \infty)$, the formula  
\eqref{eq:explicit-u} immediately implies the assertions (i) and (ii).   

\medskip

\noindent
(\ref{item:x0u}): 
This can be determined directly from \eqref{eq:u0}. 
\end{proof}

Hereafter, we set (cf. \eqref{eq:explicit-u})
$$
u_h(\theta):=u(h,\theta)
$$   
which can be considered as a function of $\theta$
fixing $h$.
Proposition \ref{x0:h} implies that the contour-line 
$\tilde x_0^{-1}(h) \cap \Omega_n^0$ satisfies 
\begin{equation}\label{eq:con-lin}
\tilde x_0^{-1}(h) \cap \Omega_n^0
= \left\{ (u, \theta) \in \Omega_n^0 \mid u=u_h(\theta) \right\}
= \left\{ (u_h(\theta), \theta) \in \Omega_n^0 \mid 0<\theta <\pi/n \right\}.  
\end{equation}
The level curve set $\Lambda_h^0$, i.e.,
$\tilde f_n (\tilde x_0^{-1}(h) \cap \Omega_n^0)$ is given by
\begin{equation*}
\Lambda_h^0=
\left\{ (h, \tilde x_1(u_h(\theta),\theta), \tilde x_2(u_h(\theta),\theta)) \mid 
0 < \theta < \pi/n \right\}.
\end{equation*}
We show the 
following properties of the level curve set $\Lambda_h^0$.

\begin{lemma}\label{lem:x2}\

\begin{enumerate}
  \item\label{item:mon3} 
       $\tilde x_1(u_h(\theta),\theta)$ 
       is a monotone decreasing function of 
       $\theta\in (0,\pi/n)$, whose value is less than $-h$, 
  \item\label{item:mon4} 
       $\tilde x_2(u_h(\theta),\theta)$ attains a maximum at  
       $\theta= \dfrac{\pi}{2(n-1)} \in (0,\pi/n)$.
 \end{enumerate}
\end{lemma}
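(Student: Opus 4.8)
The plan is to study the two functions $g(\theta):=\tilde x_1(u_h(\theta),\theta)$ and $k(\theta):=\tilde x_2(u_h(\theta),\theta)$ by differentiating along the contour-line and reading off the sign of each derivative. First I would use that $u_h$ is defined implicitly by $\tilde x_0(u_h(\theta),\theta)=h$, so that $u_h'(\theta)=-(\tilde x_0)_\theta/(\tilde x_0)_u$. Substituting this into the chain rule gives, for $i=1,2$,
\[
 \frac{d}{d\theta}\,\tilde x_i\bigl(u_h(\theta),\theta\bigr)
 =\frac{(\tilde x_0)_u(\tilde x_i)_\theta-(\tilde x_0)_\theta(\tilde x_i)_u}{(\tilde x_0)_u}
 =\frac{\det(\tilde f^{0i}_u,\tilde f^{0i}_\theta)}{(\tilde x_0)_u},
\]
so the numerator is exactly the Jacobian already computed in \eqref{eq:two} and the denominator is the derivative $(\tilde x_0)_u$ of Proposition \ref{x0:h}(\ref{item:x0u}). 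Feeding in those two formulas, the factors $(T_n(u)-\cos n\theta)^{-2}$ cancel and I obtain the clean expressions (with $u=u_h(\theta)$)
\[
 g'(\theta)=-\frac{U_{n-2}(u)\sin(n-1)\theta}{U_{n-1}(u)\sin n\theta},
 \qquad
 k'(\theta)=\frac{U_{n-2}(u)\cos(n-1)\theta}{U_{n-1}(u)\sin n\theta}.
\]

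On $\Omega_n^0$ we have $0<\theta<\pi/n$, hence $\sin n\theta>0$ and $\sin(n-1)\theta>0$, while $U_{n-2}(u),U_{n-1}(u)>0$ by \eqref{eq:cos-pos} together with Corollary \ref{cor:app:u}. Thus $g'(\theta)<0$ throughout, which gives the monotonicity asserted in (\ref{item:mon3}). For (\ref{item:mon4}) the sign of $k'(\theta)$ is that of $\cos(n-1)\theta$; as $(n-1)\theta$ runs over $\bigl(0,(n-1)\pi/n\bigr)$, and this range contains $\pi/2$ precisely because $\pi/(2(n-1))<\pi/n$ for $n\ge 3$, the derivative $k'$ is positive for $\theta<\pi/(2(n-1))$ and negative for $\theta>\pi/(2(n-1))$. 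Hence $k$ attains its unique maximum at $\theta=\pi/(2(n-1))$, as claimed.

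It then remains to establish the bound $g(\theta)<-h$ in (\ref{item:mon3}). Since $g$ is strictly decreasing, it suffices to show $\lim_{\theta\searrow 0}g(\theta)=-h$. Here I would invoke Proposition \ref{x0:h}(\ref{item:implicit-ft}), giving $u_h(\theta)\to 1$ as $\theta\searrow 0$. In the logarithmic sum of \eqref{eq:u1} the pairing $j\leftrightarrow n-j$ makes the coefficients $\sin(2\pi j/n)$ antisymmetric while the arguments $u-\cos(\theta-2\pi j/n)$ become symmetric as $(u,\theta)\to(1,0)$, so that sum tends to $0$. For the rational term I would eliminate the denominator via the contour-line relation $n\bigl(T_n(u)-\cos n\theta\bigr)=\sin(n\theta)/h$, rewriting the term as $-h\bigl(T_{n-1}(u)\sin\theta+u\sin(n-1)\theta\bigr)/\sin n\theta$; letting $\theta\searrow 0$ and using $u,T_{n-1}(u)\to 1$, the quotient tends to $1$ and the term tends to $-h$. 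Combining these two limits yields $g(\theta)\to-h$, and strict monotonicity then forces $g(\theta)<-h$ on all of $(0,\pi/n)$.

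I expect the main obstacle to be this last limit. The monotonicity in (\ref{item:mon3}) and the location of the maximum in (\ref{item:mon4}) both reduce to a transparent sign analysis once the two derivative formulas above are in hand, whereas the sharp bound $g(\theta)<-h$ requires the more delicate simultaneous evaluation of the indeterminate rational term and the cancelling logarithmic sum at the puncture $\theta=0$.
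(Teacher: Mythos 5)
Your proposal is correct and follows essentially the same route as the paper: the same chain-rule computation expressing $\frac{d}{d\theta}\tilde x_i(u_h(\theta),\theta)$ as $\det(\tilde f^{0i}_u,\tilde f^{0i}_\theta)/(\tilde x_0)_u$ via \eqref{eq:two} and Proposition \ref{x0:h}\eqref{item:x0u}, the same sign analysis using Corollary \ref{cor:app:u}, and the same limit argument for the bound $\tilde x_1<-h$, including the substitution $n(T_n(u)-\cos n\theta)=\sin n\theta/h$ along the contour-line, the limit $u_h(\theta)\to 1$ from Proposition \ref{x0:h}\eqref{item:implicit-ft}, and the $(j,n-j)$ cancellation in the logarithmic sum.
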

\begin{proof}
\ref{item:mon3}:
 By \eqref{eq:two} and Proposition \ref{x0:h} \eqref{item:x0u}, 
we have 
\begin{equation}\label{eq:x1th}
  \begin{aligned}
   \frac{d}{d\theta}\tilde x_1\bigl(u_h(\theta),\theta\bigr)
  &= 
     \frac{\partial \tilde x_1}{\partial u}\frac{d u_h}{d\theta}+
    \frac{\partial \tilde x_1}{\partial \theta} 
   =
    - \frac{\partial \tilde x_1}{\partial u}
      \frac{\frac{\partial
      \tilde x_0}{\partial\theta}}
     {\frac{\partial \tilde x_0}{\partial u}
         }+
    \frac{\partial \tilde x_1}{\partial \theta} 
\\
  &=
  \frac{1}{(\tilde x_0)_u}\det(\tilde f^{01}_u,
  \tilde f^{01}_\theta)
= - \frac{U_{n-2}(u_h(\theta))}{U_{n-1}(u_h(\theta))} 
\frac{\sin (n-1) \theta}{\sin n \theta},
 \end{aligned}
\end{equation}
which is negative for $0 < \theta < \pi/n$ (cf. Corollary \ref{cor:app:u} in the 
appendix). 
Hence, $\tilde x_1 (u_h(\theta), \theta)$ is a 
monotone decreasing function of $\theta$. 
Next, according to \eqref{eq:u1}, we set 
\begin{align*}
 \tilde x_1 (u_h(\theta), \theta) = \tilde x_{1,a}(\theta) + \tilde x_{1,b}(\theta) , 
\end{align*}
where
\begin{align*}
 \tilde x_{1,a} (\theta) &= -h \left( T_{n-1}(u_h(\theta))\frac{\sin \theta}{\sin n \theta}
 +  u_h(\theta) \frac{\sin (n-1)\theta}{\sin n \theta} \right), \\
 \tilde x_{1,b} (\theta) & = \frac{n-1}{n^2} \sum_{j=1}^{n-1}
\log \left(  u_h(\theta)- \cos \left( \theta - \frac{2 \pi j}{n}\right)\right)
\sin \frac{2 \pi j}{n} .
\end{align*} 
These satisfy 
\begin{align*}
 \lim_{\theta \searrow 0} \tilde x_{1,a} (\theta) = 
-h \left( T_{n-1}(1) \frac{1}{n} + 1 \frac{n-1}{n} \right)
= -h \left( \frac{1}{n} +\frac{n-1}{n} \right) = -h,  
\end{align*}
because of part (ii) of item (\ref{item:implicit-ft}) in 
Proposition \ref{x0:h}. Moreover, 
\begin{align*}
 \lim_{\theta \searrow 0} \tilde x_{1,b} (\theta) =  
\frac{n-1}{n^2} \sum_{j=1}^{n-1}
\log \left( 1- \cos \left( 0 - \frac{2 \pi j}{n}\right)\right)
\sin \frac{2 \pi j}{n} =0 
\end{align*}
holds, since the terms in the summation cancel 
for each pair $(j,n-j)$. 
Therefore 
\begin{equation*}
 \lim_{\theta \searrow 0} \tilde x_1 (u_h(\theta), \theta) = -h+0 =-h. 
\end{equation*}
Since the function $\theta \mapsto \tilde x_1 (u_h(\theta), \theta)$ 
is monotone decreasing, 
we conclude that 
$\tilde x_1 (u_h(\theta), \theta) < -h$ for all 
$\theta \in (0, \pi/n)$.

\medskip

\noindent
\ref{item:mon4}: 
 Similarly to \eqref{eq:x1th}, we have
\begin{equation}\label{eq:x2th}
 \begin{aligned}
   \frac{d}{d\theta}\tilde x_2\bigl(u_h(\theta),\theta\bigr)&=
     \frac{\partial \tilde x_2}{\partial u}\frac{d u_h}{d\theta}+
    \frac{\partial \tilde x_2}{\partial \theta}
   =
    - \frac{\partial \tilde x_2}{\partial u}\frac{\frac{\partial
 \tilde x_0}{\partial\theta}}{\frac{\partial \tilde
 x_0}{\partial u}
 }+
    \frac{\partial \tilde x_2}{\partial \theta}
\\
 &=
   \frac{1}{(\tilde x_0)_u}
         \det(\tilde f^{02}_u,\tilde f^{02}_\theta)
= \frac{U_{n-2}(u_h(\theta))}{U_{n-1}(u_h(\theta))} 
\frac{\cos (n-1) \theta}{\sin n \theta} , 
 \end{aligned}
\end{equation}
 which is
\begin{equation*}
 \begin{cases}
 \text{positive if }\,\,  0 < \theta < \pi/2(n-1), \\
 \text{zero if }\,\, \theta = \pi/2(n-1), \\
 \text{negative if }\,\, \pi/2(n-1) < \theta < \pi/n .
\end{cases}    
\end{equation*}
This proves the assertion \ref{item:mon4}.
\end{proof}

\begin{proposition}\label{prop:fn-inj}
The restriction of the map $\tilde f_n$
given by
\begin{equation}\label{eq:fn-restricted}
 \tilde f_n \colon 
 \tilde x_0^{-1}(h) \cap \Omega_n^0 \ni (u_h(\theta), \theta) \mapsto 
(h, \tilde x_1(u_h(\theta), \theta), \tilde x_2(u_h(\theta)) \in \Lambda_h^0 
\end{equation} 
is injective.
\end{proposition}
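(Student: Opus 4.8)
The plan is to reduce the injectivity of the three-component map in \eqref{eq:fn-restricted} to the injectivity of a single scalar function of $\theta$. By \eqref{eq:con-lin}, the domain $\tilde x_0^{-1}(h) \cap \Omega_n^0$ is a curve parametrized by $\theta \in (0, \pi/n)$ through $u = u_h(\theta)$, and the first component of the image is the constant $h$. Hence two parameters $\theta_1, \theta_2 \in (0, \pi/n)$ give the same image point if and only if
\[
 \tilde x_1(u_h(\theta_1), \theta_1) = \tilde x_1(u_h(\theta_2), \theta_2)
 \quad\text{and}\quad
 \tilde x_2(u_h(\theta_1), \theta_1) = \tilde x_2(u_h(\theta_2), \theta_2).
\]
Thus it suffices to exhibit one component of $(\tilde x_1, \tilde x_2)$ that is a strictly monotone function of $\theta$ along the contour-line.

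The key step is to invoke Lemma~\ref{lem:x2}\,\ref{item:mon3}, which already establishes that $\theta \mapsto \tilde x_1(u_h(\theta), \theta)$ is strictly monotone decreasing on $(0, \pi/n)$: its derivative \eqref{eq:x1th} equals $-\dfrac{U_{n-2}(u_h(\theta))}{U_{n-1}(u_h(\theta))}\,\dfrac{\sin(n-1)\theta}{\sin n\theta}$, which is negative on the whole interval since the Chebyshev factors are positive there (cf.\ Corollary~\ref{cor:app:u}) and $\sin(n-1)\theta,\sin n\theta>0$. A strictly monotone scalar function is injective, so distinct $\theta$ produce distinct values of $\tilde x_1$, and therefore distinct image points in $\Lambda_h^0$. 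This forces the map \eqref{eq:fn-restricted} to be injective.

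I expect essentially no further obstacle here: all of the genuine analytic work has been carried out in Lemma~\ref{lem:x2}, and the present statement is a formal consequence of the strict monotonicity of the $\tilde x_1$-component. The only thing worth recording is that the $\tilde x_1$-coordinate alone already forces injectivity, so the information about $\tilde x_2$ in Lemma~\ref{lem:x2}\,\ref{item:mon4} — that it attains an interior maximum at $\theta = \pi/(2(n-1))$ — is not needed for this proposition. That second property will instead be used downstream, to control how the pieces $R^k \Lambda_h^0$ fit together and to establish the disjointness required in step~(2) of the outline following Corollary~\ref{cor:lcs-f}.
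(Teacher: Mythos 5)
Your proposal is correct and takes essentially the same route as the paper: the paper's proof likewise deduces injectivity from the parametrization \eqref{eq:con-lin} combined with the strict monotonicity of $\theta \mapsto \tilde x_1(u_h(\theta),\theta)$ from Lemma~\ref{lem:x2}\,\ref{item:mon3}, concluding that \eqref{eq:fn-restricted} is a regular curve without self-intersection. Your side remark is also accurate — item \ref{item:mon4} is not used here but only later, in controlling the level curve sets for the disjointness of the pieces $R^k\Lambda_h^0$.
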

\begin{proof}
\eqref{eq:con-lin} and Lemma \ref{lem:x2} (1) imply that 
the above correspondence \eqref{eq:fn-restricted}
gives a regular curve without self-intersection. 
\end{proof}

\subsection{Level curve sets}
Firstly, we deal with the level curve set $\Lambda_0$ of height $h=0$
(cf. Figure \ref{fg:cl-lcs-n5}).  

\begin{proposition}\label{prop:cl-0}
 The map $\tilde f_n$ restricted to $\tilde x_0^{-1}(0)$ is injective. 
\end{proposition}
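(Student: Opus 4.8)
The plan is to exploit the explicit description \eqref{eq:x0-10} of the contour-line together with the symmetries \eqref{eq:1}. Recall that $\tilde x_0^{-1}(0)$ is the union of the $2n$ rays $\ell_k:=\{\theta=k\pi/n\}$ $(k=0,\dots,2n-1)$, each parametrized by $u$ with $\theta=k\pi/n$ fixed, together with the single point $p_\infty$. Since $\sin n\theta=0$ on every $\ell_k$, formula \eqref{eq:u0} gives $\tilde x_0\equiv 0$ along each ray, so the whole image $\Lambda_0$ lies in the plane $\{t=0\}$; and evaluating \eqref{eq:u0}--\eqref{eq:u2} as $u\to\infty$ (which is $p_\infty$) shows $\tilde f_n(p_\infty)=\mathbf{0}$, so all $2n$ rays emanate from the origin.

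First I would show each ray maps injectively onto an open half-line issuing from the origin. Along $\ell_k$ one has $(\tilde x_0)_u=0$ (Proposition \ref{x0:h}\,(\ref{item:x0u}), since $\sin n\theta=0$), so the immersedness in Proposition \ref{prop:immersion} forces the planar velocity $\bigl((\tilde x_1)_u,(\tilde x_2)_u\bigr)$ to be nowhere zero. Next I would check that the image of each ray lies on a line through the origin: for $k=0$ the cancellation in the sum of \eqref{eq:u1} over the pairs $(j,n-j)$ gives $\tilde x_1\equiv0$, so $\ell_0$ maps into the $y$-axis; for $k=1$ the relation $\tilde f_n(u,2\pi/n-\theta)=RS\,\tilde f_n(u,\theta)$, obtained by composing the two identities in \eqref{eq:1}, shows that $\tilde f_n(u,\pi/n)$ is pointwise fixed by the reflection $RS$ and hence lies on the fixed line of $RS$; the remaining rays are carried onto lines through the origin by powers of $R$ via \eqref{eq:1} and Proposition \ref{prop:G}. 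Once the image of a ray is known to lie on a line $L$ through the origin with nowhere-vanishing velocity, writing it as $s(u)\,\vec e$ for a unit direction $\vec e$ of $L$ gives $s'(u)\neq0$; by continuity $s$ is strictly monotone, so the ray maps injectively onto an open half-line, reaching the origin only in the limit $u\to\infty=p_\infty$.

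It then remains to show these $2n$ half-lines point in $2n$ distinct directions. For $\ell_0$, the monotonicity of $\tilde x_2(u,0)$ together with $\tilde x_2\to0$ ($u\to\infty$) and $\tilde x_2\to-\infty$ (as $u\to1$, where the $j=0$ logarithm in \eqref{eq:u2} blows up) identifies its outward direction as $(0,-1)$. For $\ell_1$, examining the leading logarithmic term of \eqref{eq:u1}--\eqref{eq:u2} as $u\to\cos(\pi/n)$, where only the $j=0,1$ summands blow up, gives outward direction proportional to $-\bigl(\sin\tfrac{2\pi}{n},\,1+\cos\tfrac{2\pi}{n}\bigr)=-2\cos\tfrac{\pi}{n}\,\bigl(\sin\tfrac{\pi}{n},\cos\tfrac{\pi}{n}\bigr)$, i.e. angle $-\tfrac{\pi}{2}-\tfrac{\pi}{n}$. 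Applying $R^j$ (which rotates the $(x,y)$-plane by $-2\pi/n$) to these two directions produces the angles $-\tfrac{\pi}{2}-m\pi/n$ for $m=0,\dots,2n-1$, equally spaced by $\pi/n$ and hence pairwise distinct modulo $2\pi$. Thus the $2n$ half-lines meet only at the origin, and combined with injectivity on each ray and $\tilde f_n(p_\infty)=\mathbf{0}$, the restriction $\tilde f_n|_{\tilde x_0^{-1}(0)}$ is injective.

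The delicate points are the two limit computations, namely verifying $\tilde f_n(p_\infty)=\mathbf{0}$ and extracting the outward direction of $\ell_1$ from the logarithmic blow-up as $u\to\cos(\pi/n)$; the injectivity along each individual ray is the formal part, following from immersedness plus the fact that the image lies on a line. The genuine content is the bookkeeping of the $2n$ directions, where the equal spacing by $\pi/n$, arising from interleaving the $R$-orbit of $\ell_0$ (even $m$) with that of $\ell_1$ (odd $m$), is exactly what guarantees disjointness.
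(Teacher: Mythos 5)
Your overall architecture parallels the paper's (decompose $\tilde x_0^{-1}(0)$ into the $2n$ rays $\ell_k$ plus $p_\infty$, show each ray is injectively mapped into a half-line issuing from the origin, then check the $2n$ outward directions $-\tfrac{\pi}{2}-\tfrac{m\pi}{n}$ are pairwise distinct), but your mechanism is different, and it contains one concrete error: the direction computation for $\ell_1$. As $u\to\cos(\pi/n)$ along $\theta=\pi/n$, the logarithms are \emph{not} the leading terms. By \eqref{eq:tnu-cos}, the denominator $T_n(u)-\cos n\theta=T_n(u)+1$ has a \emph{double} zero at $u=\cos(\pi/n)$ (both the $j=0$ and $j=1$ factors vanish), whereas the numerators of the rational first terms of \eqref{eq:u1}--\eqref{eq:u2}, which at $\theta=\pi/n$ reduce to $-\bigl(T_{n-1}(u)+u\bigr)\sin(\pi/n)$ and $-\cos(\pi/n)\bigl(T_{n-1}(u)+u\bigr)$, have only a simple zero there (using $T_{n-1}(\cos(\pi/n))=-\cos(\pi/n)$ and $\frac{d}{du}\bigl(T_{n-1}(u)+u\bigr)\big|_{u=\cos(\pi/n)}=(n-1)U_{n-2}(\cos(\pi/n))+1=n\neq 0$). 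Hence the rational terms blow up like $\bigl(u-\cos(\pi/n)\bigr)^{-1}$ and dominate the $\log\bigl(u-\cos(\pi/n)\bigr)$ terms you analyzed; your premise ``leading logarithmic term'' is false. Your conclusion happens to survive because the rational part also diverges along $-\bigl(\sin(\pi/n),\cos(\pi/n)\bigr)$ — as it must, once you know the image lies on the fixed line of $RS$ — and the repair is easy: since the line is already pinned down by symmetry, you only need the sign of the divergence, which the rational term supplies ($T_{n-1}(u)+u>0$ and $T_n(u)+1>0$ just above the endpoint, so $\tilde x_1\to-\infty$). Note also the asymmetry you glossed over: on $\ell_0$ at $u\to 1$ the numerator and denominator both have simple zeros, so there the rational term is bounded and the $j=0$ logarithm genuinely dominates, as you claimed; the even and odd rays behave differently.

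For comparison, the paper avoids all boundary asymptotics: using \eqref{al:x1u}, \eqref{al:x2u} and the Chebyshev identity \eqref{eq:kaw}, it computes $\frac{d}{du}\bigl(\tilde f_n|_{B_k}\bigr)=V(u)\bigl(0,\sin\frac{k\pi}{n},\cos\frac{k\pi}{n}\bigr)$ with $V(u)=U_{n-2}(u)/\bigl(T_n(u)-(-1)^k\bigr)>0$, which in one stroke yields that each ray image is a straight half-line, that the parametrization is monotone (hence injective on each ray), and the exact direction — no limit analysis needed except $\lim_{u\to\infty}\tilde f_n|_{B_k}=\tilde f_n(p_\infty)=(0,0,0)$. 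Your route (symmetry via \eqref{eq:1} fixes each line, immersedness plus $(\tilde x_0)_u=0$ gives nonvanishing speed, endpoint asymptotics fix the side) requires no new derivative identities, but pays for that economy with exactly the delicate limit computation where your gap sits; the remaining ingredients — the pairwise cancellation giving $\tilde x_1\equiv 0$ on $\ell_0$, $\tilde f_n(p_\infty)=0$ (immediate from $F=\int_0^z\alpha$), and the interleaving of the $R$-orbits of $\ell_0$ and $\ell_1$ into $2n$ directions spaced by $\pi/n$ — all check out.
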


To prove the assertion, we prepare the following lemma:

\begin{lemma}
 \begin{align}
  (\tilde x_1)_u
  &=\frac{\sin (2n-1)\theta + 2 U_{n-2}(u)\sin (n-1)\theta +U_{2n-2}(u)\sin \theta}
  {2(T_n(u)-\cos n \theta)^2},
  \label{al:x1u} \\
  (\tilde x_2)_u
  &= \frac{-\cos (2n-1)\theta - 2 U_{n-2}(u)\cos (n-1)\theta +U_{2n-2}(u)\cos \theta}
  {2(T_n(u)-\cos n \theta)^2}.
   \label{al:x2u}
  \end{align}
\end{lemma}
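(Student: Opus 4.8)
The plan is to bypass differentiating the logarithmic sums in \eqref{eq:u1} and \eqref{eq:u2} directly, and instead to read off the derivatives from the holomorphic data, exactly in the spirit of the proof of Proposition \ref{prop:immersion}. Since $f_n=\Re F$ is real-valued, the identity \eqref{eq:key} gives $r f_r=\Re\bigl(z\,a(z)\bigr)$ componentwise, so $(x_k)_r=\tfrac1r\Re\bigl(z\,a_k(z)\bigr)$; combining this with \eqref{eq:fu} yields
\[
 (\tilde x_k)_u=\frac{2r^2}{r^2-1}\,(x_k)_r
   =\frac{2r}{r^2-1}\,\Re\bigl(z\,a_k(z)\bigr)
   \qquad(k=1,2),
\]
where $z=re^{\imag\theta}$ and $r=u-\sqrt{u^2-1}$. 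Thus everything reduces to computing $\Re\bigl(z\,a_1(z)\bigr)$ and $\Re\bigl(z\,a_2(z)\bigr)$ from the explicit expression \eqref{eq:za}.

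First I would clear the denominator $(z^n-1)^2$ by multiplying numerator and denominator by $\overline{(z^n-1)^2}=(r^n e^{-\imag n\theta}-1)^2$, so that the denominator becomes the positive real quantity $\rho^2$, where $\rho:=r^{2n}-2r^n\cos n\theta+1$. For $\tilde x_1$ this writes $z\,a_1(z)$ as $\imag(z+z^{2n-1})(r^n e^{-\imag n\theta}-1)^2/\rho^2$, and taking the real part amounts to computing $-\Im$ of the six-term product in the numerator. Grouping the resulting terms by $\sin(2n-1)\theta$, $\sin(n-1)\theta$ and $\sin\theta$ gives
\[
 \Re\bigl(z\,a_1(z)\bigr)=\frac{1}{\rho^2}
 \Bigl[r^{2n-1}(r^2-1)\sin(2n-1)\theta
   +2r^{n+1}(r^{2n-2}-1)\sin(n-1)\theta
   +r(r^{4n-2}-1)\sin\theta\Bigr],
\]
and the analogous computation with $z\,a_2(z)=-(z-z^{2n-1})(r^n e^{-\imag n\theta}-1)^2/\rho^2$ produces the same three-term pattern with cosines in place of sines.

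Finally I would convert the powers of $r$ into Chebyshev polynomials. Using $T_n(u)=\tfrac12(r^n+r^{-n})$ (cf.\ \eqref{eq:T2}) one has $\rho=2r^n\bigl(T_n(u)-\cos n\theta\bigr)$, hence $\rho^2=4r^{2n}(T_n(u)-\cos n\theta)^2$, and using $U_m(u)=(r^{m+1}-r^{-(m+1)})/(r-r^{-1})$ (cf.\ \eqref{eq:U2}) together with $r^2-1=r(r-r^{-1})$ one identifies
\[
 \frac{r^{2n-2}-1}{r^2-1}=r^{n-2}U_{n-2}(u),\qquad
 \frac{r^{4n-2}-1}{r^2-1}=r^{2n-2}U_{2n-2}(u).
\]
Multiplying each grouped term by $2r/(r^2-1)$ and substituting these identities, every power of $r$ cancels and leaves exactly \eqref{al:x1u} and \eqref{al:x2u}. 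The main obstacle is purely the bookkeeping in the six-term expansion and the sign and power cancellations; once the two ratio identities above are in hand, the Chebyshev form is forced. The computation is carried out on the space-like region $u>1$ (equivalently $r<1$), but since both sides of \eqref{al:x1u} and \eqref{al:x2u} are real-analytic on $\Omega_n$, the identities extend to all of $\Omega_n$ by analytic continuation, just as \eqref{eq:two} was obtained in the proof of Proposition \ref{prop:immersion}.
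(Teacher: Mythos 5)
Your proposal is correct and takes essentially the same route as the paper's proof: both reduce $(\tilde x_k)_u$ to $\frac{2r}{r^2-1}\Re\bigl(z\,a_k(z)\bigr)$ via \eqref{eq:key} and \eqref{eq:fu}, compute this real part explicitly in $(r,\theta)$, and then substitute $u=(r+r^{-1})/2$ to obtain the Chebyshev form \eqref{al:x1u}--\eqref{al:x2u}. The only difference is presentational: you spell out the conjugate-multiplication bookkeeping, the ratio identities for $U_{n-2}$ and $U_{2n-2}$, and the final real-analytic extension to $\Omega_n$, all of which the paper leaves implicit.
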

\begin{proof}
 By \eqref{eq:key}, \eqref{eq:za} and 
 \eqref{eq:fu}, we obtain that
 \begin{align}
  \frac{(\tilde x_1)_u}2 &=
  \frac{r}{r^2-1}\Re(za_1(z)) \notag \\
  &=\frac{2 (r^{3 n}-r^{n+2})\sin (n-1) \theta
  +\left(r^2-1\right) r^{2 n} \sin (2n-1)\theta
  +\left(r^{4 n}-r^2\right) \sin \theta}
  {\left(r^2-1\right) 
  \left(r^{2 n}-2 r^n \cos n \theta+1\right)^2}, \notag \\
  \frac{(\tilde x_2)_u}2
  &=\frac{r}{r^2-1}\Re(za_2(z)) \notag \\
  \nonumber
  &=-\frac{2 (r^{3 n}-r^{n+2}) \cos (n-1)\theta
  +\left(r^2-1\right) r^{2 n} 
  \cos (2n-1)\theta+\left(r^2-r^{4 n}\right) \cos \theta}
  {\left(r^2-1\right) \left(r^{2 n}-2 r^n \cos n \theta
  +1\right)^2}.
  \end{align}
So $(r+r^{-1})/2 =u$ proves \eqref{al:x1u} and \eqref{al:x2u}.
\end{proof}

\begin{proof}[Proof of Proposition \ref{prop:cl-0}]
Recall the equality \eqref{eq:x0-10} which asserts that 
\begin{equation*}
 \tilde x_0^{-1}(0) = \bigcup_{k=0}^{2n-1} B_k \cup  \{ p_\infty \},  
\end{equation*}
where
\begin{equation*}
 B_k := \left\{ (u, \theta) \in \Omega_n \mid 
\theta= \frac{k}{n}\pi \right\}.  
\end{equation*}
Consider the map 
\begin{equation*}
 \tilde f_n |_{B_k}=(\tilde x_0, \tilde x_1, \tilde x_2)|_{\theta = k \pi/n}
= (0, \tilde x_1(u,k \pi/n), \tilde x_2(u,k \pi/n)). 
\end{equation*} 
It follows from \eqref{al:x1u}, \eqref{al:x2u} 
and \eqref{eq:kaw}
that 
\begin{align*}
 \frac{d}{du}\left( \left. \tilde f_n \right|_{B_k} \right)
= V(u)
\left(0, \sin \frac{k}{n}\pi, \cos \frac{k}{n}\pi \right), 
\end{align*}
where
$$
V(u):=\frac{U_{n-2}(u)}{T_n(u)-(-1)^k}.
$$
This implies that $\tilde f_n |_{B_k}$ parametrizes
a straight half-line with the velocity $V(u)$.
If $k$ is even, $\tilde f_n |_{B_k}$ is 
defined on the interval $(1, \infty)$ 
and $V(u)$ is positive on $(1, \infty)$. 
If $k$ is odd, $\tilde f_n |_{B_k}$ is defined on the interval $(\cos (\pi/n), \infty)$ 
and $V(u)$ is positive on $(\cos (\pi/n), \infty)$.
Hence, for any $k$, the map $\tilde f_n|_{B_k}$ is injective. Moreover,  
the monotonicity of $\tilde f_n|_{B_k}$ and the equality 
\begin{equation*}
 \lim_{u \to \infty}\tilde f_n|_{B_k}(u) = \tilde f_n ( p_\infty ) = (0,0,0)
\end{equation*}  
 imply that the point $p_\infty$ is the  
unique inverse-image of $(0,0,0)$. 
Therefore we conclude the map 
$\tilde f_n \colon \tilde x_0^{-1}(0) \to \R^3_1$ is injective. 
\end{proof}

\medskip 
We next consider the case where the height $h$ is not equal to $0$.

For a fixed $h$, let $P_h$ denote a plane in $(\R^3_1;t,x,y)$ defined 
by the equation $t=h$, with coordinate system $(x,y)$.

\begin{proposition}\label{prop:lcs}
For any fixed $h>0$, the level curve set $\Lambda_h^0$ of height $h$ 
lies in the region 
\begin{equation*}
D_h:= \left\{  (x, y) \mid 
x < -h, \ x \cos(2\pi/n) -y \sin (2 \pi/n) + h >0 \right\} \subset P_h. 
\end{equation*}
\end{proposition}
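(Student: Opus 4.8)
The region $D_h$ is cut out by two inequalities, which I would treat separately. Writing a point of $\Lambda_h^0$ as $\bigl(\tilde x_1(u_h(\theta),\theta),\tilde x_2(u_h(\theta),\theta)\bigr)$ with $0<\theta<\pi/n$ (cf.\ \eqref{eq:con-lin}), the first condition $x<-h$ is \emph{exactly} the first assertion of Lemma~\ref{lem:x2}, which already gives $\tilde x_1(u_h(\theta),\theta)<-h$. So the whole problem is the second condition, i.e.\ positivity of
\[
  G(\theta):=\tilde x_1\bigl(u_h(\theta),\theta\bigr)\cos\tfrac{2\pi}{n}
            -\tilde x_2\bigl(u_h(\theta),\theta\bigr)\sin\tfrac{2\pi}{n}+h .
\]
The plan is to first rewrite $G$ using the symmetries \eqref{eq:1}. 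Since $R^{-1}\tilde f_n(u,\theta)=\tilde f_n(u,\theta-2\pi/n)$ and $R^{-1}=\trans R$ fixes the $t$-axis while sending the $x$-component of $(t,x,y)$ to $x\cos(2\pi/n)-y\sin(2\pi/n)$, the combination $\tilde x_1\cos(2\pi/n)-\tilde x_2\sin(2\pi/n)$ equals $\tilde x_1(u_h(\theta),\theta-2\pi/n)$; the reflection $S$ (giving $\tilde x_1(u,-\psi)=-\tilde x_1(u,\psi)$) then turns this into $-\tilde x_1\bigl(u_h(\theta),\tfrac{2\pi}{n}-\theta\bigr)$. Hence the goal becomes $\tilde x_1\bigl(u_h(\theta),\phi\bigr)<h$ with $\phi:=\tfrac{2\pi}{n}-\theta\in(\tfrac{\pi}{n},\tfrac{2\pi}{n})\subset(0,\pi)$. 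The point of this reduction is that I now control a \emph{single} component and may use its explicit splitting $\tilde x_1(u,\phi)=\tilde x_{1,a}(u,\phi)+\tilde x_{1,b}(u,\phi)$ from \eqref{eq:u1}.

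For the logarithmic part I would pair the index $j$ with $n-j$: since $\sin(2\pi(n-j)/n)=-\sin(2\pi j/n)$, $\cos(\phi-2\pi(n-j)/n)=\cos(\phi+2\pi j/n)$, and $\cos(\phi-\delta)-\cos(\phi+\delta)=2\sin\phi\sin\delta>0$ for $\phi\in(0,\pi)$ and $0<\delta<\pi$, each pair contributes $\sin(2\pi j/n)\log\frac{u-\cos(\phi-2\pi j/n)}{u-\cos(\phi+2\pi j/n)}<0$, so $\tilde x_{1,b}(u_h(\theta),\phi)<0$ throughout. Using the contour relation $1/\bigl(n(T_n(u_h)-\cos n\theta)\bigr)=h/\sin n\theta$ (valid at the shifted angle too, because $\cos n\phi=\cos n\theta$), the rational part becomes $\tilde x_{1,a}(u_h,\phi)=-\tfrac{h}{\sin n\theta}M$, where $M:=T_{n-1}(u_h)\sin(\tfrac{2\pi}{n}-\theta)-u_h\sin(\tfrac{2\pi}{n}+(n-1)\theta)$. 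Altogether
\[
   G(\theta)=\frac{h}{\sin n\theta}\,\bigl(\sin n\theta+M\bigr)
            +\bigl|\tilde x_{1,b}(u_h,\phi)\bigr| ,
\]
so it remains to control $P:=\sin n\theta+M$.

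The hard part is that $P$ is \emph{not} sign-definite on $(0,\pi/n)$: the term $-u_h\sin(\tfrac{2\pi}{n}+(n-1)\theta)$ changes sign, so $P$ cannot be bounded termwise. My plan is therefore to locate the minimum of $G$. From \eqref{eq:x1th}--\eqref{eq:x2th} one computes $\frac{dG}{d\theta}=-\frac{U_{n-2}(u_h)}{U_{n-1}(u_h)\sin n\theta}\sin\bigl((n-1)\theta+\tfrac{2\pi}{n}\bigr)$, whose prefactor is positive (Corollary~\ref{cor:app:u}); this is negative on $(0,\theta^*)$ and positive on $(\theta^*,\pi/n)$ for $\theta^*:=\frac{(n-2)\pi}{n(n-1)}$, so $G$ attains its minimum at $\theta^*$. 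The decisive simplification is that $\sin\bigl(\tfrac{2\pi}{n}+(n-1)\theta^*\bigr)=\sin\pi=0$, so the offending term of $M$ drops out and $P(\theta^*)=\sin n\theta^*+T_{n-1}(u_h(\theta^*))\sin(\tfrac{2\pi}{n}-\theta^*)$. Replacing $T_{n-1}(u_h)$ by $\cos((n-1)\theta^*)$ (using $u_h>\cos\theta^*$ and the monotonicity of $T_{n-1}$, Proposition~\ref{prop:app:t}) and collapsing the products to sums yields the telescoping value $\sin n\theta\,(1-\cos\tfrac{2\pi}{n})=2\sin^2\tfrac{\pi}{n}\,\sin n\theta$, whence $P(\theta^*)>2\sin^2\tfrac{\pi}{n}\,\sin n\theta^*>0$. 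Thus $G(\theta^*)>0$, and consequently $G\ge G(\theta^*)>0$ on all of $(0,\pi/n)$, which together with Lemma~\ref{lem:x2} proves $\Lambda_h^0\subset D_h$. The only genuinely delicate computations are the index-pairing that fixes the sign of $\tilde x_{1,b}$ and the telescoping trigonometric identity at $\theta^*$; both become elementary once the reduction to the single quantity $\tilde x_1(u_h,\phi)$ is in place.
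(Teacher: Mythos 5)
Your proof is correct, and while it shares its first half with the paper's argument, the decisive positivity step is genuinely different. Both proofs dispose of $x<-h$ by Lemma~\ref{lem:x2}~(1), and your formula
$\frac{dG}{d\theta}=-\frac{U_{n-2}(u_h)}{U_{n-1}(u_h)\sin n\theta}\,\sin\bigl((n-1)\theta+\frac{2\pi}{n}\bigr)$
together with the minimum at $\theta^*=\frac{(n-2)\pi}{n(n-1)}$ is exactly the paper's computation of $\frac{d}{d\theta}\phi_h$ from \eqref{eq:x1th}--\eqref{eq:x2th}, with your $\theta^*$ equal to the paper's $\theta_0$. The divergence is in proving $\phi_h(\theta^*)>0$: the paper fixes $\theta_0$ and varies $h$, setting $\Phi(h)=\phi_h(\theta_0)$, showing $\lim_{h\searrow 0}\Phi(h)=0$ (via $u_h(\theta_0)\to\infty$) and $\frac{d\Phi}{dh}=\Upsilon(u_h(\theta_0))>0$, which requires the $u$-derivatives \eqref{al:x1u}, \eqref{al:x2u}, Proposition~\ref{x0:h}~(2), and an inline factorization $1+U_{2n-2}(\cos\alpha)+2U_{n-1}(\cos\alpha)=\frac{2\sin n\alpha}{\sin\alpha}\bigl(\cos(n-1)\alpha+1\bigr)$. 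You instead work at fixed $h$: I checked the matrix bookkeeping in your symmetry reduction ($R^{-1}$ does send the $x$-coordinate to $x\cos\frac{2\pi}{n}-y\sin\frac{2\pi}{n}$, and $S$ flips the sign of $\tilde x_1$), so $\phi_h(\theta)=h-\tilde x_1\bigl(u_h(\theta),\frac{2\pi}{n}-\theta\bigr)$ is correct; the $j\leftrightarrow n-j$ pairing does make the logarithmic part of \eqref{eq:u1} strictly negative for $\phi\in(0,\pi)$, which covers $\phi\in(\pi/n,2\pi/n)$ when $n\ge 3$ (you should add the half-sentence that $(u_h(\theta),\phi)\in\Omega_n$ by the rotational invariance of $\Omega_n$, so all the logarithms are defined); and at $\theta^*$ the indefinite term indeed vanishes since $(n-1)\theta^*+\frac{2\pi}{n}=\pi$, after which $T_{n-1}(u_h)>T_{n-1}(\cos\theta^*)=\cos((n-1)\theta^*)=-\cos\frac{2\pi}{n}$ is legitimate because both points lie in $\bigl[\cos\frac{\pi}{n-1},\infty\bigr)$ where $T_{n-1}$ is increasing (Proposition~\ref{prop:app:t} with $n-1$ in place of $n$), and the identities $\sin n\theta^*=\sin\frac{\pi}{n-1}=\sin\bigl(\frac{2\pi}{n}-\theta^*\bigr)$ give $P(\theta^*)>2\sin^2\frac{\pi}{n}\,\sin n\theta^*$ exactly as you claim. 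What each route buys: the paper's $h$-differentiation reveals that the bound is asymptotically sharp ($\Phi(0^+)=0$), while your argument avoids \eqref{al:x1u}, \eqref{al:x2u} and the variation in $h$ altogether and yields the stronger explicit margin $\phi_h(\theta)\ge\phi_h(\theta^*)>2h\sin^2\frac{\pi}{n}$. Like the paper's proof, yours implicitly assumes $n\ge 3$ (for $n=2$ one has $\theta^*=0$ and an empty log-sum), which is harmless since the paper treats $n=2$ separately.
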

\begin{proof}
 We parametrize $\Lambda_h^0$ so that 
(cf.  Proposition \ref{prop:fn-inj}) 
$$(x_h(\theta), y_h(\theta)) := 
\left( \tilde x_1 (u_h(\theta), \theta), \tilde x_2 (u_h(\theta), \theta) \right) \  
(0<\theta < \pi/n).
$$ 
We have already determined that $x_h(\theta) <-h$ 
(cf. Lemma \ref{lem:x2} \ref{item:mon3}). 
It remains to show 
\begin{equation*}
\phi_h(\theta) := x_h(\theta) \cos (2 \pi/n) - y_h (\theta) \sin (2 \pi/n) +h >0 
\end{equation*}
 for $0<\theta < \pi/n$. 
 Using \eqref{eq:x1th}, \eqref{eq:x2th},  we have 

\begin{equation*}
 \frac{d}{d \theta} \phi_h(\theta)
= 
-\frac{U_{n-2}(u_h(\theta))}{U_{n-1}(u_h(\theta))\sin n \theta}
\sin \left( (n-1) \theta + \frac{2 \pi}{n} \right).
\end{equation*}
This implies that $\phi_h(\theta)$ has a 
minimum at 
$$\theta_0 = \dfrac{n-2}{(n-1)n} \pi. $$ 
Hence, we have only to prove that
\begin{equation}\label{eq:keyofproof}
  \phi_h(\theta_0) >0 . 
\end{equation}
Indeed,
\begin{equation*}
\Phi(h) :=  \phi_h(\theta_0) = 
\tilde x_1( u_h(\theta_0),\theta_0)\cos(2 \pi/n)
- \tilde x_2( u_h(\theta_0),\theta_0)\sin(2 \pi/n)+h
\end{equation*}
satisfies
\begin{equation}\label{eq:limPhi}
 \lim_{h \searrow 0} \Phi(h) = 0 \cdot \cos(2 \pi/n) -0 \cdot \sin(2 \pi/n) + 0 =0,
\end{equation}
because of part (i) of item (\ref{item:implicit-ft}) in Proposition \ref{x0:h}. Moreover, a straightforward
computation using Proposition \ref{x0:h} (\ref{item:x0u}), 
\eqref{al:x1u} and \eqref{al:x2u} 
leads us to 
\begin{equation*}
\frac{d \Phi}{d h}(h) = \frac{1+U_{2n-2}(u_h(\theta_0))}{2 U_{n-1}(u_h(\theta_0))}+1
= \frac{1+U_{2n-2}(u_h(\theta_0))+ 2 U_{n-1}(u_h(\theta_0))}{2 U_{n-1}(u_h(\theta_0))}.
\end{equation*}
We wish to know the sign of $d \Phi/d h$. 
Note that $u_h(\theta_0) \in (\cos \theta_0, \infty)$ for  
$h \in (0, \infty)$. For this purpose, we set
\begin{equation*}
\Upsilon(u):= \frac{1+U_{2n-2}(u)+ 2 U_{n-1}(u)}{2 U_{n-1}(u)}
\text{ for } u \in (\cos \theta_0, \infty). 
\end{equation*}
Then, it is obvious 
$\Upsilon(u) >0$ for $u \in [1, \infty)$  
(cf. Proposition \ref{prop:app:u} in the appendix).
For $u \in (\cos \theta_0, 1)$, it is also obvious that 
the denominator of $\Upsilon(u)$ is positive. 
Since there exists a unique $\alpha \in (0, \theta_0)$
with $u = \cos \alpha$,  
the numerator is computed as
\begin{multline*}
 1+U_{2n-2}(\cos \alpha)+ 2 U_{n-1}(\cos \alpha) = 
\frac{\sin \alpha + \sin (2n-1) \alpha +2\sin n \alpha}{\sin \alpha} \\
= \frac{2 \sin n \alpha \cos (n-1) \alpha + 2 \sin n \alpha}{\sin \alpha} 
= \frac{2 \sin n \alpha}{\sin \alpha}(\cos (n-1)\alpha +1).  
\end{multline*}
So the numerator is positive because $0<\alpha < \theta_0= \frac{n-2}{(n-1)n}\pi$. 

Thus, $\Upsilon(u) >0$ for all $u \in (\cos \theta_0, \infty)$.  
Hence we obtain  
\begin{equation}\label{eq:Phi'-posi}
  \frac{d \Phi}{d h}(h) =\Upsilon(u_h(\theta_0)) >0 \text{ for } h \in (0,\infty).
\end{equation}

It follows from \eqref{eq:limPhi} and \eqref{eq:Phi'-posi} that
$\Phi(h) >0$ for all $h \in (0, \infty)$, that is, 
$\phi_h(\theta_0) >0$ for all $h \in (0, \infty)$. 
We have now proved \eqref{eq:keyofproof}. 
\end{proof}

\bigskip

We are in a position to complete a proof of the embeddedness of 
$\tilde f_n$.  

\begin{theorem}
 For any integer $n \ge 2$, the analytic extension $\tilde f_n:\Omega_n\to
 \R^3_1$ 
 is a proper embedding. 
\end{theorem}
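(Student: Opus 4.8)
The plan is to combine the pieces already in place. Properness is Proposition~\ref{prop:proper} and the immersion property is Proposition~\ref{prop:immersion}; since a proper injective immersion into a (Hausdorff) manifold is automatically a closed embedding, the whole theorem reduces to showing that $\tilde f_n$ is globally injective. Because the first component $\tilde x_0$ of $\tilde f_n$ is exactly the height function $t$, any two points with a common image must lie on the same contour-line $\tilde x_0^{-1}(h)$; hence it suffices to prove that the restriction $\tilde f_n\colon\tilde x_0^{-1}(h)\to\Lambda_h$ is injective for every $h\in\R$.

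The height $h=0$ is settled by Proposition~\ref{prop:cl-0} (which also accounts for $p_\infty$). For $h\neq 0$, the first relation in \eqref{eq:1}, together with the fact that $S$ in \eqref{eq:SR} reverses the sign of $t$, reduces matters to $h>0$, and by Proposition~\ref{prop:nonneg} every such contour-line meets $\Omega_n^0$. Since $R$ fixes the $t$-axis, $\tilde x_0$ is invariant under $\theta\mapsto\theta+2\pi/n$, so one may write $\tilde x_0^{-1}(h)=\bigcup_{k=0}^{n-1}R^k\bigl(\tilde x_0^{-1}(h)\cap\Omega_n^0\bigr)$ and correspondingly $\Lambda_h=\bigcup_{k=0}^{n-1}R^k\Lambda_h^0$ as in Corollary~\ref{cor:lcs-f}. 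As noted in the Outline, two things remain: (1) injectivity of $\tilde f_n$ on the single arc $\tilde x_0^{-1}(h)\cap\Omega_n^0$, which is Proposition~\ref{prop:fn-inj}; and (2) the disjointness of the $n$ rotated copies $R^k\Lambda_h^0$.

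Step (2) is the main obstacle, and I would handle it with Proposition~\ref{prop:lcs}, which confines $\Lambda_h^0$ to the wedge $D_h=\{x<-h\}\cap\{x\cos(2\pi/n)-y\sin(2\pi/n)>-h\}$. For $k=0,\dots,n-1$ set $g_k(x,y):=-x\cos\frac{2\pi k}{n}+y\sin\frac{2\pi k}{n}-h$, so that $\{g_k>0\}=R^k\{x<-h\}$ is the half-plane ``beyond the $k$-th edge'' of the regular $n$-gon of inradius $h$ centered at the origin (whose edges lie on $\{g_k=0\}$). Translating the two defining inequalities of $D_h$ gives $D_h\subset\{g_0>0\}\cap\{g_1<0\}$, and applying $R^k$ yields $R^kD_h\subset\{g_k>0\}\cap\{g_{k+1}<0\}$ (indices mod $n$). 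The crucial observation is that, for a fixed point with $\rho:=\sqrt{x^2+y^2}$, the numbers $g_0,\dots,g_{n-1}$ are the values of $t\mapsto\rho\cos t-h$ sampled at $n$ equally spaced arguments spanning $2\pi$; hence they are positive exactly on a single cyclic block of indices, so the cyclic sign pattern of $(g_0,\dots,g_{n-1})$ has at most one transition from $+$ to $-$. A point lying in both $R^jD_h$ and $R^kD_h$ with $j\neq k$ would force two distinct such transitions, at $j\to j+1$ and at $k\to k+1$, a contradiction. Thus the wedges $R^kD_h$, and a fortiori the arcs $R^k\Lambda_h^0$, are pairwise disjoint.

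Putting these together, $\tilde f_n\colon\tilde x_0^{-1}(h)\to\Lambda_h$ is injective for every $h$: for $h=0$ by Proposition~\ref{prop:cl-0}, for $h<0$ by the $S$-symmetry, and for $h>0$ because each $R^k\Lambda_h^0$ is injectively parametrized by (1) while the copies are disjoint by (2). Global injectivity of $\tilde f_n$ follows, and combined with properness and the immersion property it shows that $\tilde f_n$ is a proper embedding.
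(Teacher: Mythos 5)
Your proposal is correct and follows essentially the same route as the paper's own proof: properness (Proposition \ref{prop:proper}) and immersedness (Proposition \ref{prop:immersion}) reduce the theorem to injectivity, which is checked level set by level set --- Proposition \ref{prop:cl-0} for $h=0$, the $S$-symmetry for $h<0$, and Propositions \ref{prop:fn-inj} and \ref{prop:lcs} together with the pairwise disjointness of the wedges $R^k D_h$ for $h>0$. The one place you go beyond the paper is that the paper merely asserts the disjointness of the $R^k(D_h)$ with a reference to figures, whereas your cyclic sign-block argument with $g_k(x,y)=-x\cos\frac{2\pi k}{n}+y\sin\frac{2\pi k}{n}-h$ (the values of $t\mapsto\rho\cos t-h$ at $n$ equally spaced arguments are positive on at most one cyclic block, so at most one transition $g_j>0$, $g_{j+1}<0$ can occur) proves it rigorously for all $n\ge 2$, a worthwhile tightening of the single step the paper leaves to inspection.
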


\begin{proof}
The assertion for $n=2$ is trivial, as stated in Section \ref{sec:1}.  
 We have already proved that
 $\tilde f_n$ is a proper immersion
 (cf.\ Propositions 
\ref{prop:proper} and
\ref{prop:immersion}).
So it is sufficient to show that
$\tilde f_n$ is injective for each $n \ge 3$. 
For this purpose, we will show that $\tilde f_n$ restricted to each contour-line 
$\tilde x_0^{-1}(h)$ 
is injective. We have already done this for $h=0$ in Proposition \ref{prop:cl-0}.  
For $h \ne 0$, it suffices to show $\Lambda_h^0$ never intersects 
the other $R^k \Lambda_h^0$ ($k=1,2,\dots , n-1$), since we have 
already seen  
$\tilde f_n \colon \tilde x_0^{-1}(h) \cap \Omega_n^0 \to 
\Lambda_h^0 \subset D_h$  is injective (cf. Proposition \ref{prop:fn-inj}). 
In fact, 
the region $D_h$ of Proposition \ref{prop:lcs} does not intersect 
the other $R^k(D_h)$ ($k=1,2,\dots , n-1$) (see Figures \ref{fig:region-6-h1} and 
\ref{fig:region-6-h2}), thus, 
$\Lambda_h^0$ never intersects the other $R^k \Lambda_h^0$.   
Therefore, we conclude that $\tilde f_n \colon \Omega_n \to \R^3_1$ 
is an injective proper immersion, i.e., a proper embedding.   
\end{proof}

\begin{figure}[hbt] 
\begin{tabular}{cc}
\begin{minipage}{0.5\hsize} 
\centering
\includegraphics[width=0.7\hsize]{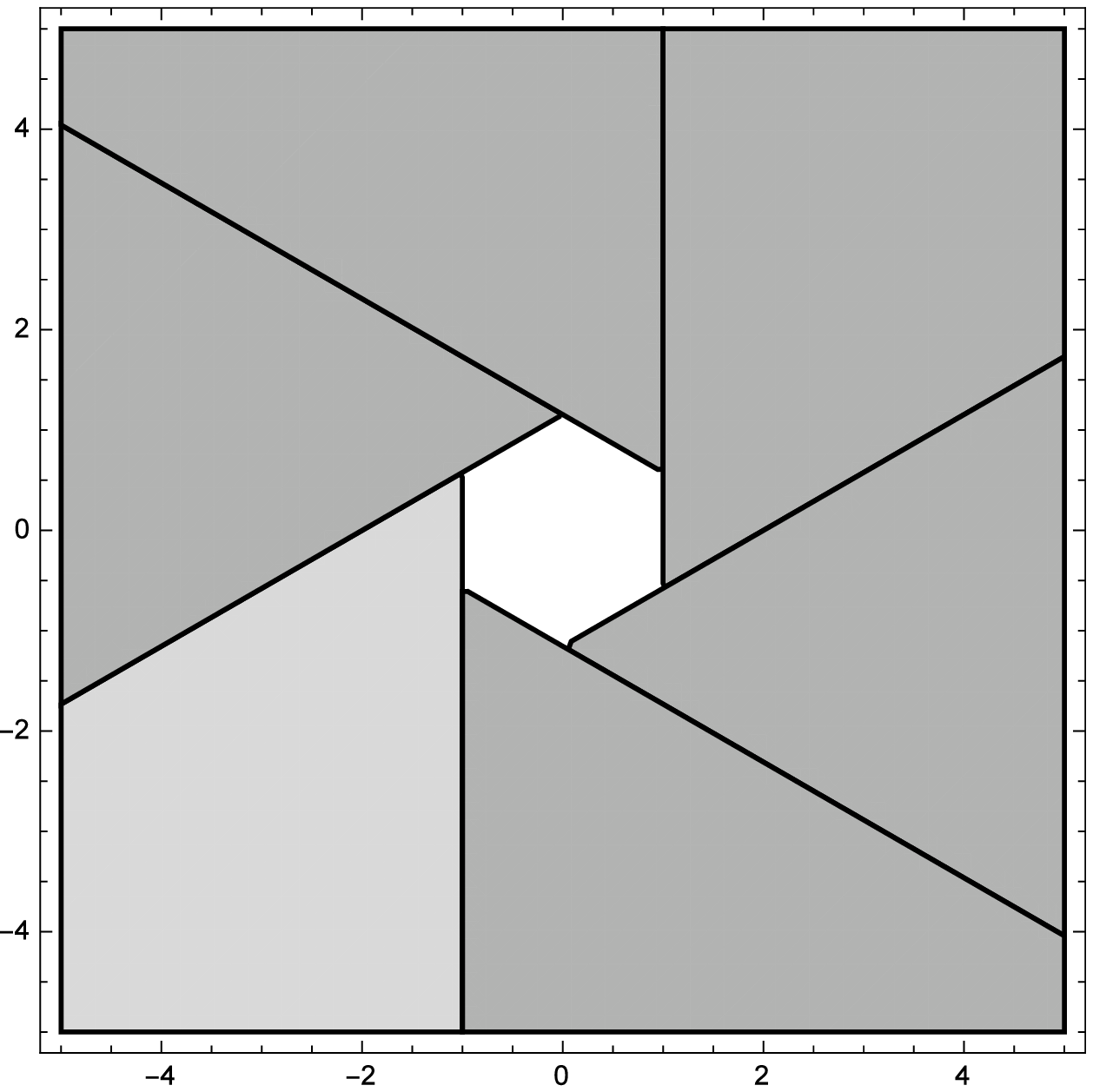}
\caption{$\bigcup_{k=0}^{n-1} R^k(D_h)$ ($n=6$, $h=1$)} 
\label{fig:region-6-h1}
\end{minipage} 
\begin{minipage}{0.5\hsize}
\centering 
\includegraphics[width=0.7\hsize]{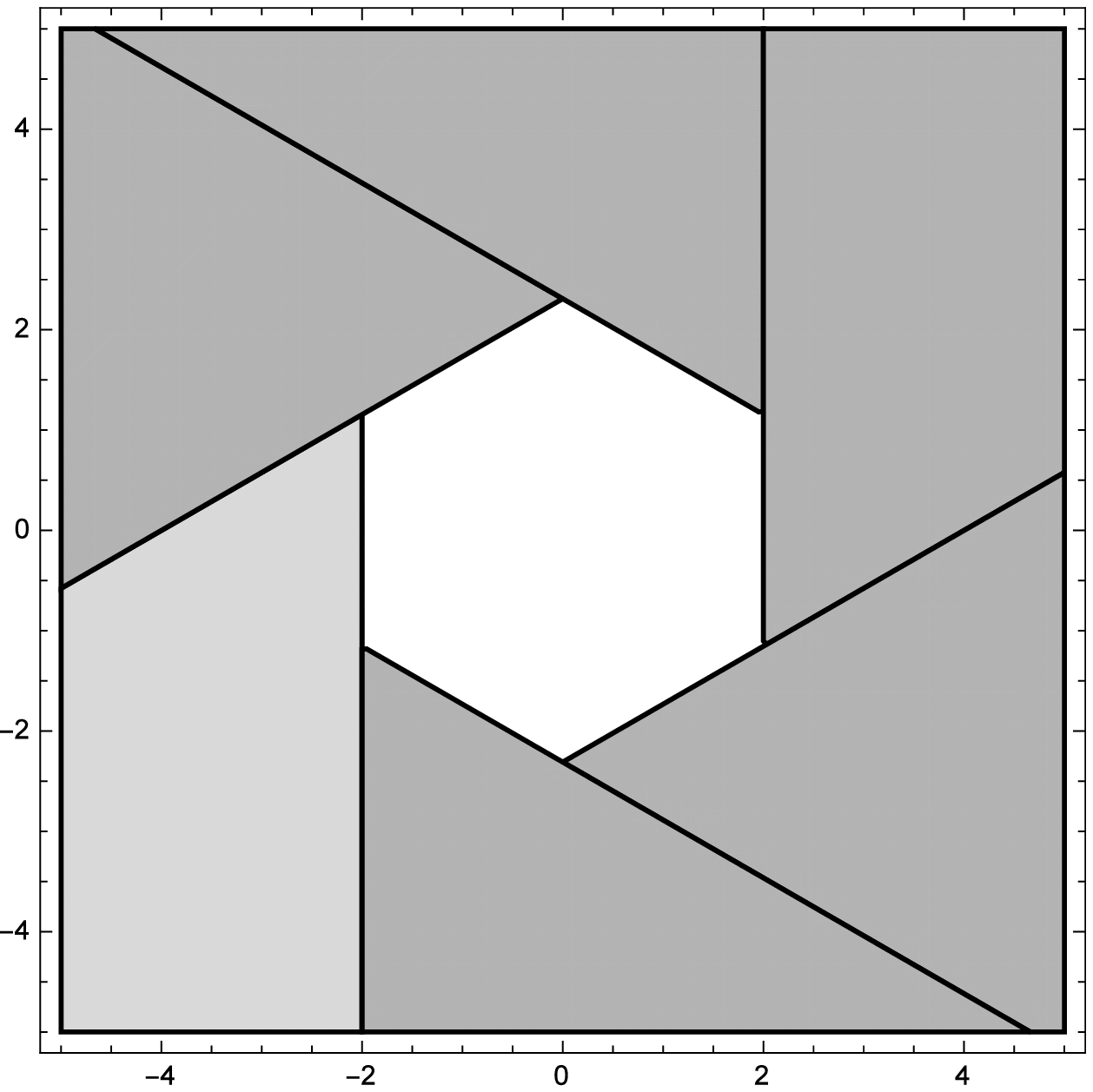}
\caption{$\bigcup_{k=0}^{n-1} R^k(D_h)$ ($n=6$, $h=2$)} 
\label{fig:region-6-h2}
\end{minipage} 
\end{tabular}
\end{figure}

\begin{figure}[htbp] 
\begin{center}
\raisebox{7mm}{\includegraphics[width=.28\linewidth]{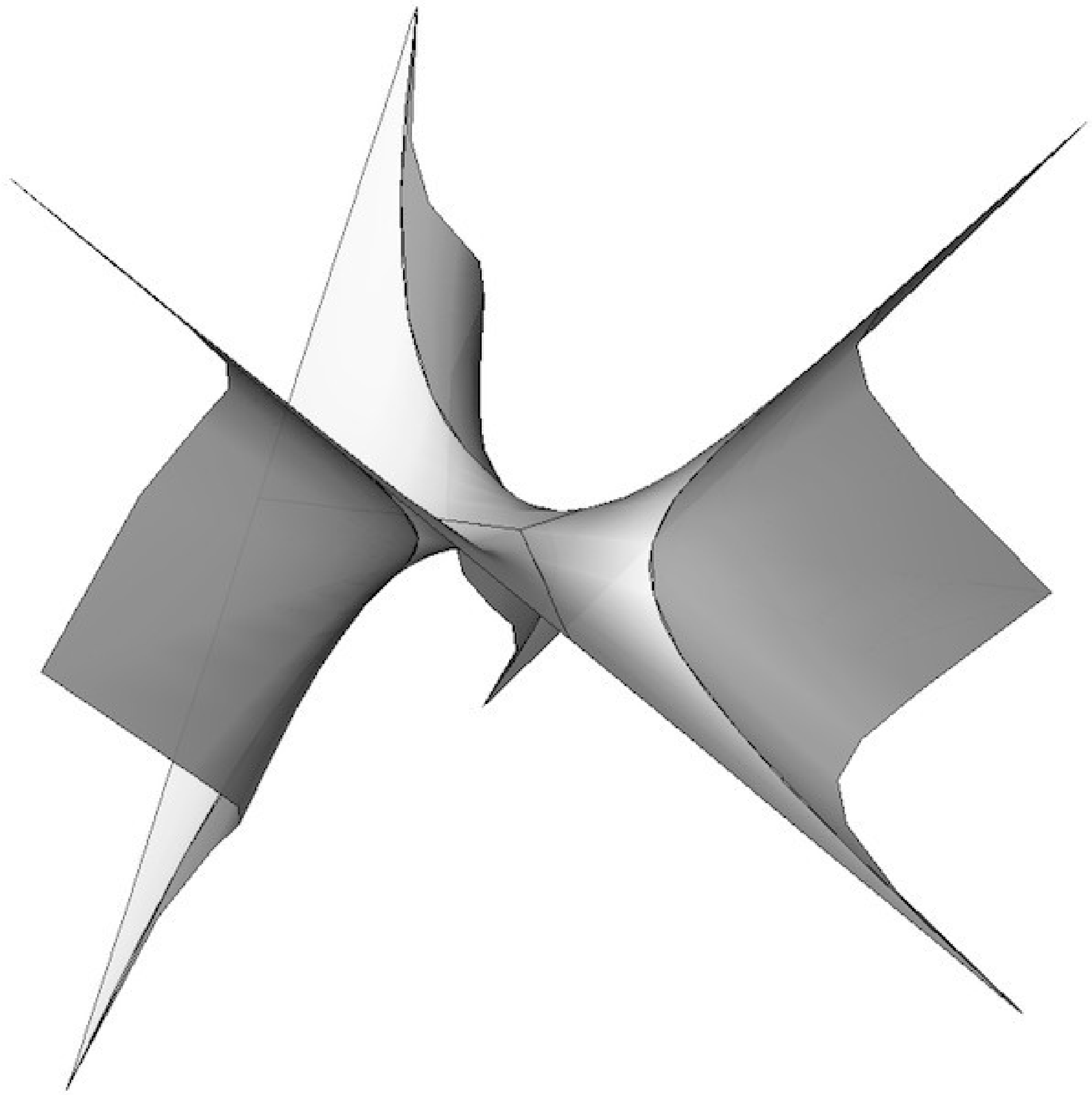}} 
\qquad
 \includegraphics[width=.34\linewidth]{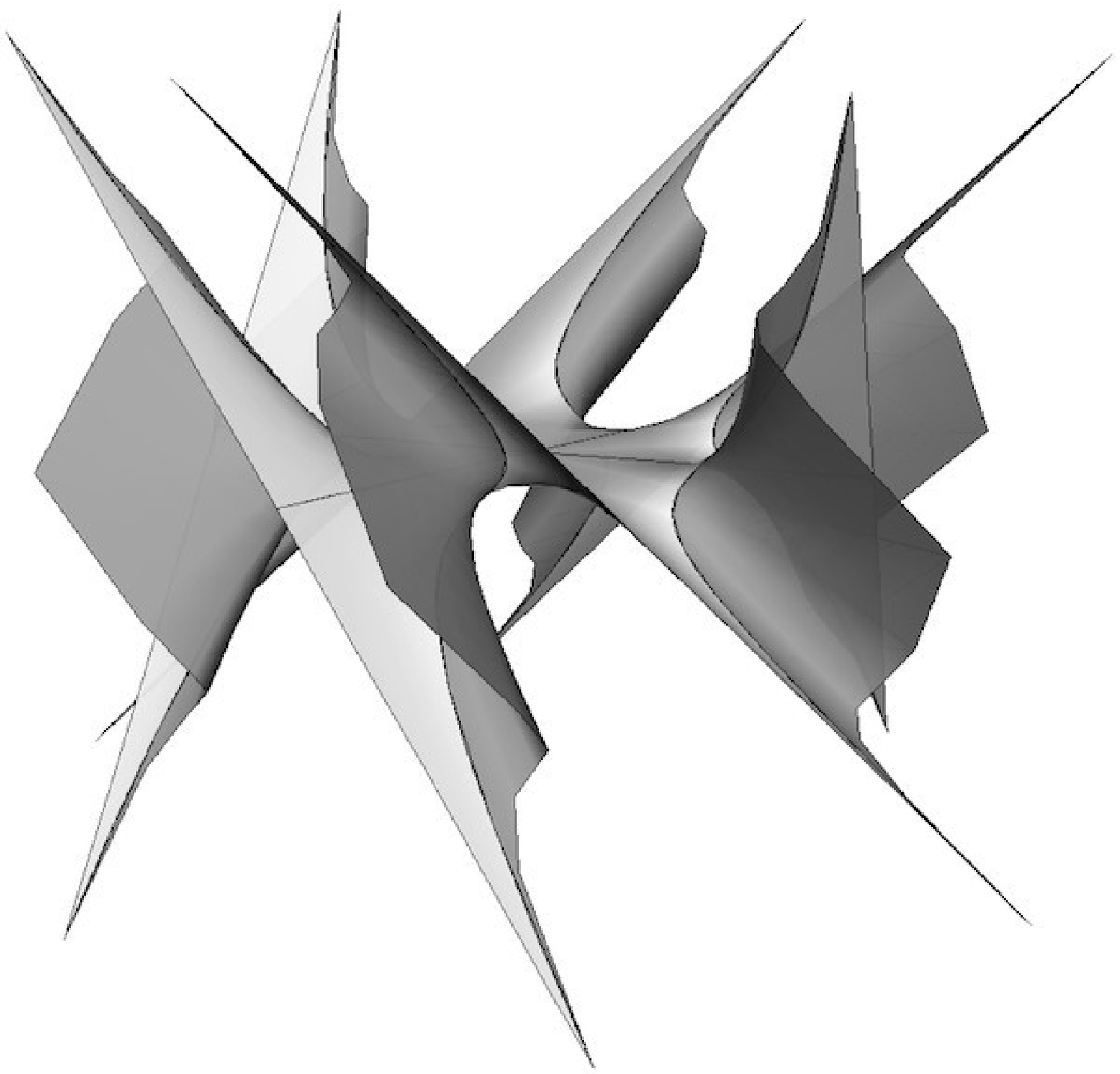} 
\caption{Images of $\tilde{f}_3$ and
$\tilde{f}_6$ (the time-like parts are
indicated by black shading). 
}
\label{fg:f5-psi}
\end{center}
\end{figure}

\appendix
\section{Some properties of Chebyshev polynomials}

The \emph{first Chebyshev polynomial} $T_n(x)$ ($n=1,2,\dots$) is, by definition, 
the polynomial of degree $n$ such that
\begin{equation}\label{eq:T}
    T_n(\cos \theta)=\cos n\theta.
\end{equation}
It holds that
\begin{equation}\label{eq:T2}
    T_n(u)=\frac{r^n+r^{-n}}{2}\qquad \left(u:=\frac{r+r^{-1}}2\right).
\end{equation}

\begin{lemma}\label{lemma:Phi}
 The following identity holds{\rm:}
\begin{equation}\label{eq:app:Phi}
\Psi_n(u):=    T_n(u)-\cos n\theta
       =2^{n-1}\prod_{j=0}^{n-1}
         \left(u-\cos\left(\theta-\frac{2\pi j}{n}\right)\right).
\end{equation}
\end{lemma}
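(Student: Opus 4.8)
The plan is to prove the identity as a polynomial identity in $u$ for each fixed $\theta$, exploiting the parametrization $u = (r+r^{-1})/2$ together with the explicit formula $T_n(u) = (r^n + r^{-n})/2$ from \eqref{eq:T2}. Writing $w := e^{\imag\theta}$ and $\zeta := e^{2\pi\imag/n}$, I would first observe that each factor on the right-hand side can be expressed through $r$ and $w$: since $\cos(\theta - 2\pi j/n) = \tfrac12(w\zeta^{-j} + w^{-1}\zeta^{j})$, a short computation gives
\[
2\left(u - \cos\left(\theta - \frac{2\pi j}{n}\right)\right)
= r + r^{-1} - w\zeta^{-j} - w^{-1}\zeta^{j}
= r^{-1}(r - w\zeta^{-j})(r - w^{-1}\zeta^{j}).
\]

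Second, I would take the product over $j = 0, \dots, n-1$. The factor $2^{n-1}$ on the right-hand side exactly absorbs the $n$ factors of $2$ produced above (up to a single $1/2$), so the claim reduces to evaluating $\prod_{j=0}^{n-1}(r - w\zeta^{-j})$ and $\prod_{j=0}^{n-1}(r - w^{-1}\zeta^{j})$. Here I would use that $\{\zeta^{-j}\}_{j=0}^{n-1}$ and $\{\zeta^{j}\}_{j=0}^{n-1}$ each run over all $n$-th roots of unity, so that $\prod_{j}(X - \zeta^{\pm j}) = X^n - 1$; this yields $\prod_j(r - w\zeta^{-j}) = r^n - w^n$ and $\prod_j(r - w^{-1}\zeta^{j}) = r^n - w^{-n}$. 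Combining, the right-hand side collapses to $\tfrac12 r^{-n}(r^n - w^n)(r^n - w^{-n}) = \tfrac12(r^n + r^{-n}) - \tfrac12(w^n + w^{-n})$, which is precisely $T_n(u) - \cos n\theta$ by \eqref{eq:T2} and $\tfrac12(w^n + w^{-n}) = \cos n\theta$.

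Since the map $r \mapsto (r + r^{-1})/2$ surjects onto $\C$, verifying the identity along this parametrization establishes it as an identity of polynomials in $u$ for every fixed $\theta$, which is what is claimed. The computation is essentially bookkeeping, so I do not anticipate a genuine obstacle; the only point requiring care is the root-of-unity product, and in particular the harmless reindexing of $\{\zeta^{-j}\}$ versus $\{\zeta^{j}\}$.

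As an alternative, one could argue purely by counting roots: both sides are degree-$n$ polynomials in $u$ with leading coefficient $2^{n-1}$ (that being the leading coefficient of $T_n$), and the defining relation $T_n(\cos\psi) = \cos n\psi$ shows that each $u = \cos(\theta - 2\pi j/n)$ solves $T_n(u) = \cos n\theta$, since $n(\theta - 2\pi j/n) \equiv n\theta \pmod{2\pi}$. For generic $\theta$ these $n$ roots are distinct, giving the factorization, and one then extends to all $\theta$ by real-analyticity in $\theta$. The drawback of this route — and its one genuinely subtle point — is that the roots $\cos(\theta - 2\pi j/n)$ can collide for special $\theta$, so the density/continuity step is really needed; this is why I would favor the direct computation above.
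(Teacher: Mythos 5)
Your proof is correct, and your primary argument takes a genuinely different route from the paper. The paper proves the lemma by exactly the method you list as the ``alternative'': it substitutes $u=\cos\left(\theta-\frac{2\pi j}{n}\right)$ into $\Psi_n(u)=T_n(u)-\cos n\theta$, uses the defining relation $T_n(\cos\psi)=\cos n\psi$ to see that each of these $n$ values is a root, and concludes directly from the facts that $\Psi_n$ has degree $n$ in $u$ and that the leading coefficient of $T_n$ is $2^{n-1}$. Notably, the paper does \emph{not} address the subtlety you flag: for special $\theta$ (e.g.\ $\theta=0$, where $\cos\left(\theta-\frac{2\pi j}{n}\right)$ coincides for the pair $j$ and $n-j$) the listed roots collide, so ``$n$ roots plus known leading coefficient'' strictly requires either a genericity-plus-continuity step or a multiplicity argument, which the paper passes over in silence. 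Your main computation via $u=(r+r^{-1})/2$, the factorization $2\left(u-\cos\left(\theta-\frac{2\pi j}{n}\right)\right)=r^{-1}(r-w\zeta^{-j})(r-w^{-1}\zeta^{j})$ with $w=e^{\imag\theta}$, and the root-of-unity products $\prod_{j=0}^{n-1}(r-w\zeta^{-j})=r^n-w^n$, $\prod_{j=0}^{n-1}(r-w^{-1}\zeta^{j})=r^n-w^{-n}$, sidesteps this entirely and is uniform in $\theta$; I checked the bookkeeping and it closes up exactly to $\frac12(r^n+r^{-n})-\frac12(w^n+w^{-n})=T_n(u)-\cos n\theta$. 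One small simplification: you do not need surjectivity of $r\mapsto(r+r^{-1})/2$ onto $\C$; both sides are polynomials in $u$ of degree $n$ for fixed $\theta$, and real $r>0$ already produces all $u\in[1,\infty)$, an infinite set, which forces the polynomial identity. In short, the paper's approach buys brevity; yours buys a factorization valid verbatim for every $\theta$ with no genericity or continuity step.
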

\begin{proof}
By \eqref{eq:T}, we have
 \begin{align*}
   \Psi_n\left(\cos\left(\theta-\frac{2\pi j}{n}\right)\right)
    &= \cos\left(n\left(\theta-\frac{2\pi j}{n}\right)\right)-\cos n\theta
    =0.
 \end{align*}
 Since $\Psi_n(u)$ is a polynomial in $u$ of degree $n$ and
 the highest coefficient of $T_n(u)$
 is equal to $2^{n-1}$, we obtain the assertion.
\end{proof}

The \emph{second Chebyshev polynomial} $U_n(x)$ ($n=1,2,\dots$) is, by definition, 
the polynomial of degree $n$ such that
\begin{equation}\label{eq:U}
 \sin (n+1)\theta=U_{n}(\cos \theta)\sin \theta .
\end{equation}
It holds that
\begin{equation}\label{eq:U2}
    U_{n-1}(u) = \frac{r^{n}-r^{-n}}{r-r^{-1}}\qquad
       \left(u = \frac{r+r^{-1}}{2}\right).
\end{equation}

The first and the 
second Chebyshev polynomials are related as follows:
\begin{equation*}
 \frac{d}{d x} T_n(x) = n U_{n-1}(x). 
\end{equation*}

\begin{proposition}\label{prop:kaw}
For $m \ge 1$, it holds that
 \begin{equation}\label{eq:kaw}
  U_{2m}(x)-1 = 2 T_{m+1}(x) U_{m-1}(x). 
 \end{equation}
\end{proposition}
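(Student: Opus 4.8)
The plan is to verify the identity on the arc $x=\cos\theta$ with $0<\theta<\pi$, where both Chebyshev polynomials acquire their trigonometric meaning, and then invoke the fact that a polynomial identity holding on infinitely many values of $x$ holds identically. First I would substitute $x=\cos\theta$ into the left-hand side and use the defining relation \eqref{eq:U} of the second Chebyshev polynomial to write
\begin{equation*}
 U_{2m}(\cos\theta)-1 = \frac{\sin(2m+1)\theta}{\sin\theta}-1
  = \frac{\sin(2m+1)\theta-\sin\theta}{\sin\theta},
\end{equation*}
which is legitimate since $\sin\theta\neq 0$ on $(0,\pi)$.

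Next I would simplify the numerator by the sum-to-product formula for a difference of sines, namely $\sin A-\sin B = 2\cos\frac{A+B}{2}\sin\frac{A-B}{2}$, with $A=(2m+1)\theta$ and $B=\theta$. This gives $\sin(2m+1)\theta-\sin\theta = 2\cos(m+1)\theta\,\sin m\theta$, so that
\begin{equation*}
 U_{2m}(\cos\theta)-1 = \frac{2\cos(m+1)\theta\,\sin m\theta}{\sin\theta}.
\end{equation*}
On the other side, applying \eqref{eq:T} and \eqref{eq:U} directly yields
\begin{equation*}
 2\,T_{m+1}(\cos\theta)\,U_{m-1}(\cos\theta)
  = 2\cos(m+1)\theta\cdot\frac{\sin m\theta}{\sin\theta},
\end{equation*}
which is exactly the same expression, so the two sides agree for every $\theta\in(0,\pi)$.

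Finally I would conclude by the polynomial argument: both $U_{2m}(x)-1$ and $2T_{m+1}(x)U_{m-1}(x)$ are polynomials in $x$, and they coincide at the infinitely many points $x=\cos\theta$, $\theta\in(0,\pi)$, which fill the interval $(-1,1)$; hence the two polynomials are identically equal. I do not anticipate any real obstacle here, as the computation is a single trigonometric reduction; the only point requiring a word of care is ensuring $\sin\theta\neq 0$ so that the division is valid and the passage from the trigonometric identity to the polynomial identity is justified by the interval of agreement rather than by isolated points.
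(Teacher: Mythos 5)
Your proposal is correct and follows essentially the same route as the paper's own proof: substitute $x=\cos\theta$, use the defining relations \eqref{eq:T} and \eqref{eq:U}, and reduce $\sin(2m+1)\theta-\sin\theta=2\cos(m+1)\theta\,\sin m\theta$ via the sum-to-product formula. If anything, you are slightly more careful than the paper, since you make explicit both the restriction $\sin\theta\neq 0$ and the polynomial-identity argument that the paper leaves implicit in the phrase ``it is sufficient to show the identity for $x=\cos\theta$.''
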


\begin{proof}
It is sufficient to show the identity for
$x=\cos \theta$ ($\theta\in [0,2\pi)$).
Then
\begin{align*}
 U_{2m}(\cos \theta)-1 &= \frac{\sin (2m+1) \theta}{\sin \theta} -1 
= \frac{\sin (2m+1) \theta - \sin \theta}{\sin \theta} \\
&= \frac{2 \cos(m+1) \theta \sin m \theta}{\sin \theta}
 = 2 T_{m+1}(\cos \theta) U_{m-1}(\cos \theta). 
\end{align*}
\end{proof}

\begin{proposition}\label{prop:app:u}
 Let $n$ be an integer greater than $2$ $($resp. $n=2)$. Then 
$y=U_{n-1}(x)$ is monotone increasing on the interval 
$\{ x \mid \cos \frac{\pi}{n-1} \le x < \infty \}$ and the range is 
$\{ y \mid -1 \le y < \infty \}$ 
$($resp. $\{ y \mid -2 \le y < \infty \})$. 
Furthermore, $U_{n-1}( \cos(\pi/n) )=0$ and $U_{n-1}( 1 )=n$
hold.  
\end{proposition}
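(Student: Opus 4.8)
The plan is to read everything off the two parametrizations of $U_{n-1}$ supplied by \eqref{eq:U} and \eqref{eq:U2}. On $[-1,1]$ I would write $x=\cos\theta$, so that $U_{n-1}(\cos\theta)=\sin n\theta/\sin\theta$, and on $[1,\infty)$ I would write $x=\frac12(r+r^{-1})=\cosh t$ with $r=e^{t}\ge 1$, so that $U_{n-1}(x)=(r^{n}-r^{-n})/(r-r^{-1})=\sinh nt/\sinh t$. The advertised special values then fall out immediately by taking limits: $U_{n-1}(\cos(\pi/n))=\sin\pi/\sin(\pi/n)=0$, $U_{n-1}(1)=\lim_{\theta\to 0}\sin n\theta/\sin\theta=n$, and for $n>2$ the left endpoint gives $U_{n-1}(\cos(\pi/(n-1)))=\sin(n\pi/(n-1))/\sin(\pi/(n-1))=-1$, which is precisely the claimed lower end of the range. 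Since the leading coefficient of $U_{n-1}$ is positive, $U_{n-1}(x)\to+\infty$ as $x\to\infty$, so once monotonicity is established the range is exactly $[-1,\infty)$. The case $n=2$ is exceptional only because $\cos(\pi/(n-1))=\cos\pi=-1$ makes the above quotient indeterminate; there I would simply invoke $U_1(x)=2x$, giving the minimum value $U_1(-1)=-2$ and trivial monotonicity.

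For the monotonicity statement I would argue on the two regions $[\cos(\pi/(n-1)),1]$ and $[1,\infty)$ separately and then glue, using that $U_{n-1}$ is a polynomial and hence continuous. On $[1,\infty)$ it is enough to show that $t\mapsto\sinh nt/\sinh t$ increases for $t\ge 0$; its numerator derivative $P(t):=n\cosh nt\sinh t-\sinh nt\cosh t$ satisfies $P(0)=0$ and $P'(t)=(n^2-1)\sinh nt\sinh t>0$ for $t>0$, so $P>0$ and the quotient increases. On $[\cos(\pi/(n-1)),1]$, with $x=\cos\theta$ and $\theta\in(0,\pi/(n-1)]$, I would show instead that $g(\theta):=\sin n\theta/\sin\theta$ is strictly decreasing in $\theta$ (equivalently increasing in $x$); its numerator $M(\theta):=n\cos n\theta\sin\theta-\sin n\theta\cos\theta$ satisfies $M(0)=0$ and the analogous identity $M'(\theta)=(1-n^2)\sin n\theta\sin\theta$, so that the sign of $M'$ is governed by $-\sin n\theta$.

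The hard part will be controlling the sign of $M$ across the whole range $(0,\pi/(n-1))$, because $\sin n\theta$ changes sign at $\theta=\pi/n$, which lies strictly inside this interval. On $(0,\pi/n)$ one has $\sin n\theta>0$, hence $M'<0$ and $M<0$ (starting from $M(0)=0$). On $(\pi/n,\pi/(n-1))$ one checks, for $n>2$, that $n\theta\in(\pi,\pi+\pi/(n-1))\subset(\pi,2\pi)$, so $\sin n\theta<0$ and $M$ is increasing there; the endpoint value $M(\pi/(n-1))=(1-n)\cos(\pi/(n-1))\sin(\pi/(n-1))$ is $\le 0$ since $\pi/(n-1)\le\pi/2$, so $M$ stays negative on this subinterval as well. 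Therefore $g'<0$ throughout $(0,\pi/(n-1))$, and $U_{n-1}$ increases on $[\cos(\pi/(n-1)),1]$. Matching the two increasing branches at $x=1$ yields monotonicity on all of $[\cos(\pi/(n-1)),\infty)$, which together with the boundary values and the behaviour at $+\infty$ gives the stated range and finishes the proof.
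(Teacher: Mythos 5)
Your proof is correct, but there is essentially nothing in the paper to compare it against: the authors state Proposition \ref{prop:app:u} (like its companion Proposition \ref{prop:app:t}) without proof, offering only the appendix figure of the graphs of $y=U_{n-1}(x)$ and $y=T_n(x)$ as justification. Your argument therefore supplies details the paper leaves to the reader, and it does so using exactly the two parametrizations the paper records: \eqref{eq:U} for $x=\cos\theta\in[-1,1]$, and \eqref{eq:U2}, i.e.\ $U_{n-1}(\cosh t)=\sinh nt/\sinh t$, for $x\geq 1$. The computations check out: $P'(t)=(n^2-1)\sinh nt\,\sinh t$ and $M'(\theta)=(1-n^2)\sin n\theta\,\sin\theta$ are both correct; on $(0,\pi/n)$ you get $M<0$ from $M(0)=0$ and $M'<0$; and on $(\pi/n,\pi/(n-1))$ you correctly invoke $n>2$ twice, once to place $n\theta$ in $(\pi,2\pi)$ so that $M$ is increasing there, and once to ensure $\pi/(n-1)\leq\pi/2$ so that $M(\pi/(n-1))=(1-n)\cos(\pi/(n-1))\sin(\pi/(n-1))\leq 0$, whence $M<0$ on the interior of that subinterval. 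One detail you handled correctly that deserves emphasis: for $n=3$ the endpoint value $M(\pi/2)$ is exactly $0$ --- indeed $U_2(x)=4x^2-1$ has its critical point precisely at $x=\cos(\pi/2)=0$, the left end of the stated interval --- so the non-strict inequality together with your ``$M$ strictly increasing, hence $M<0$ on the interior'' step is genuinely needed, and a careless strict inequality at the endpoint would have been false. The special values $U_{n-1}(\cos(\pi/n))=0$ and $U_{n-1}(1)=n$, the lower endpoint value $-1$ for $n>2$, the degenerate case $n=2$ via $U_1(x)=2x$ (where $\cos(\pi/(n-1))=-1$ and the minimum is $-2$), and the gluing of the two increasing branches at $x=1$ (where both limits equal $n$) then yield the stated monotonicity and range, since $U_{n-1}$ is continuous and tends to $+\infty$.
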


\begin{corollary}\label{cor:app:u}
 For arbitrary $m \le n-1$, 
\begin{equation*}
 U_m(x) >0 \text{ for } \cos (\pi/n) < x < \infty. 
\end{equation*}
\end{corollary}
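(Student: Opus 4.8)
The plan is to locate the largest real zero of $U_m$ and to observe that, once $m\le n-1$, the point $\cos(\pi/n)$ already lies strictly to the right of it. If $m=0$ the claim is trivial since $U_0\equiv 1$, so I assume $m\ge 1$. First I would read off the zeros of $U_m$ from the defining relation \eqref{eq:U}: for $\theta\in(0,\pi)$ it gives $U_m(\cos\theta)=\sin((m+1)\theta)/\sin\theta$, so $U_m(\cos\theta)=0$ precisely when $(m+1)\theta$ is an integer multiple of $\pi$, i.e.\ at $\theta=k\pi/(m+1)$ with $k=1,\dots,m$. This produces $m$ distinct zeros $x=\cos(k\pi/(m+1))$ of $U_m$ lying in $(-1,1)$; since $\deg U_m=m$, these exhaust the zeros of $U_m$, and the largest of them is $x_*:=\cos(\pi/(m+1))$.

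Next I would use that the leading coefficient of $U_m$ is $2^m>0$. The interval $(x_*,\infty)$ contains no zero of $U_m$, and $U_m(x)\to+\infty$ as $x\to+\infty$; since a polynomial cannot change sign across an interval free of zeros, $U_m$ is strictly positive on all of $(x_*,\infty)$. Equivalently, one may invoke Proposition \ref{prop:app:u} with $n$ replaced by $m+1$: there $U_m$ is monotone increasing to the right of $x_*$ and vanishes at $x_*$, hence is positive for $x>x_*$. Either route establishes $U_m(x)>0$ on $(x_*,\infty)$.

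Finally I would compare $x_*$ with $\cos(\pi/n)$. The hypothesis $m\le n-1$ gives $m+1\le n$, hence $\pi/(m+1)\ge\pi/n$, and because cosine is decreasing on $[0,\pi]$ this yields $x_*=\cos(\pi/(m+1))\le\cos(\pi/n)$. Consequently $(\cos(\pi/n),\infty)\subset(x_*,\infty)$, and the previous step forces $U_m(x)>0$ for $\cos(\pi/n)<x<\infty$, which is the assertion. The only bookkeeping that needs care is confirming that all $m$ zeros of $U_m$ are real and lie in $(-1,1)$, so that the sign of the leading coefficient genuinely controls the sign on $(x_*,\infty)$; but this is immediate from the zero count above. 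The decisive structural input is the elementary comparison $\pi/(m+1)\ge\pi/n$, so I expect no real obstacle here beyond keeping the indexing straight.
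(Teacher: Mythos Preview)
Your argument is correct and matches the paper's intended route: the corollary is stated without proof immediately after Proposition~\ref{prop:app:u}, and your second route---applying that proposition with $n$ replaced by $m+1$ and then using $\cos(\pi/(m+1))\le\cos(\pi/n)$---is exactly the deduction the paper has in mind. Your primary route via the explicit zero set and leading coefficient is a self-contained variant of the same idea and is equally valid.
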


\begin{proposition}\label{prop:app:t}
 Let $n$ be an integer greater than or equal to $2$. Then 
$y=T_{n}(x)$ is monotone increasing on the interval 
$\{ x \mid \cos \frac{\pi}{n} \le x < \infty \}$ and the range is 
$\{ y \mid -1 \le y < \infty \}$. 
Furthermore, $T_{n}( \cos(\pi/2n) )=0$
and  $T_{n}( 1 )=1$ hold.  
\end{proposition}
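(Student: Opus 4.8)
The plan is to deduce everything from the derivative relation $\frac{d}{dx}T_n(x) = n\,U_{n-1}(x)$ recorded in the appendix, together with the properties of $U_{n-1}$ already established in Proposition \ref{prop:app:u}, and finally to read off the two special values directly from the defining identity \eqref{eq:T}. No genuinely new computation is needed; the content is entirely a matter of assembling the prior results on the correct interval.

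First I would establish the monotonicity. Since $T_n'(x) = n\,U_{n-1}(x)$, it suffices to show that $U_{n-1}(x) \ge 0$ on $[\cos(\pi/n), \infty)$. Because $\pi/n < \pi/(n-1)$ and cosine is decreasing on $[0,\pi]$, we have $\cos(\pi/n) > \cos(\pi/(n-1))$, so $[\cos(\pi/n), \infty)$ is contained in $[\cos(\pi/(n-1)), \infty)$, on which $U_{n-1}$ is monotone increasing by Proposition \ref{prop:app:u} (valid for all $n \ge 2$). Combining this with the value $U_{n-1}(\cos(\pi/n)) = 0$ from the same proposition yields $U_{n-1}(x) \ge 0$ for $x \ge \cos(\pi/n)$, and strictly positive for $x > \cos(\pi/n)$. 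Hence $T_n'(x) \ge 0$ on $[\cos(\pi/n), \infty)$, so $T_n$ is monotone increasing there.

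Next I would determine the range. As $T_n$ is continuous and strictly increasing on $[\cos(\pi/n), \infty)$, its image is the interval from its value at the left endpoint to its limit at $+\infty$. The defining relation \eqref{eq:T} gives $T_n(\cos(\pi/n)) = \cos(n\cdot \pi/n) = \cos\pi = -1$, while $T_n$ is a polynomial of degree $n$ with positive leading coefficient $2^{n-1}$ (as used in Lemma \ref{lemma:Phi}), so $T_n(x) \to +\infty$ as $x \to \infty$. Therefore the range is exactly $\{ y \mid -1 \le y < \infty \}$.

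Finally, the two special values follow at once from \eqref{eq:T}: $T_n(\cos(\pi/2n)) = \cos(n \cdot \pi/(2n)) = \cos(\pi/2) = 0$ and $T_n(1) = T_n(\cos 0) = \cos 0 = 1$. The whole argument is elementary once Proposition \ref{prop:app:u} is in hand; the only step requiring a moment's care is the interval inclusion $[\cos(\pi/n),\infty) \subset [\cos(\pi/(n-1)),\infty)$, which is what lets me transfer the monotonicity and the vanishing of $U_{n-1}$ onto the domain where $T_n$ is being studied, and this is the single point where the bookkeeping could go astray.
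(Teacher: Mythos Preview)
The paper states this proposition in the appendix without proof, so there is nothing to compare against; your argument is correct and is exactly the natural derivation from the material already assembled there. One minor remark: for the strict positivity of $U_{n-1}$ on $(\cos(\pi/n),\infty)$ you could simply invoke Corollary~\ref{cor:app:u} with $m=n-1$ rather than rederiving it from the monotonicity in Proposition~\ref{prop:app:u}, but your route is equally valid.
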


 \begin{figure}[hbtp] 
\begin{tabular}{cc}
 \includegraphics[width=.45\linewidth]{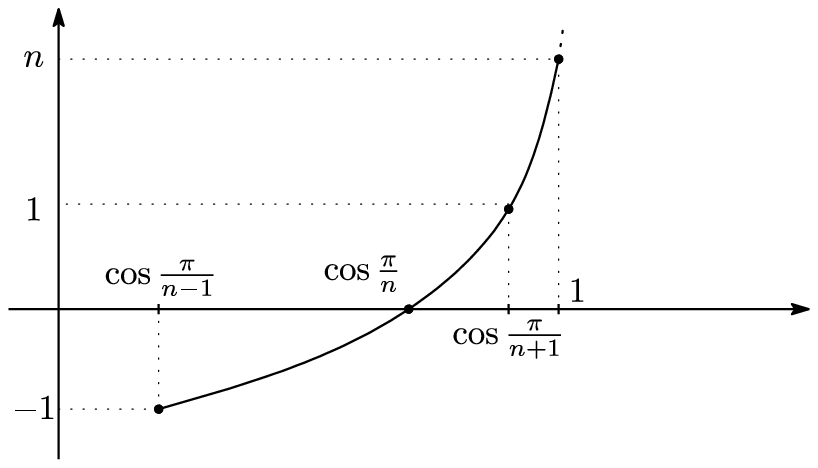} & 
 \includegraphics[width=.45\linewidth]{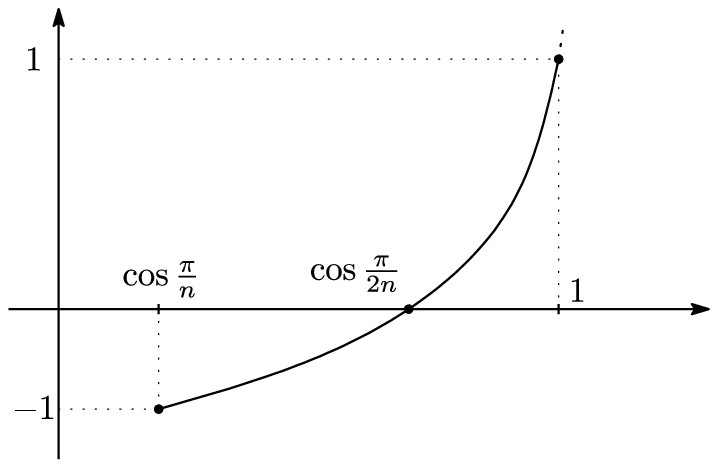} \\
 {$y=U_{n-1}(x)$}  & {$y=T_n(x)$} 
\end{tabular}
\caption{Chebyshev polynomials are monotone increasing 
on the interval toward the right. }
\end{figure} 


\end{document}